\newtheorem{thm}{Theorem}[section]
\newtheorem{cor}[thm]{Corollary}
\newtheorem{lem}[thm]{Lemma}
\newtheorem{prop}[thm]{Proposition}
 \newtheorem*{hypWS}{Hypothesis S}
\newtheorem*{MPCC}{Montgomery's Pair Correlation Conjecture}
\newcommand{\s}{\sigma}
\renewcommand{\b}{\beta}
\renewcommand{\r}{\rho}
\newcommand{\g}{\gamma}
\newcommand{\z}{\zeta}
\renewcommand{\d}{\delta}
\newcommand{\e}{\epsilon}
\renewcommand{\l}{\log}
\renewcommand{\L}{\Lambda}
\renewcommand{\Re}{{\mathfrak{R}\,}}
\def\({\left(}
\def\){\right)}
\def\intl{\int\limits}
\newcommand{\ga}{g}
\newcommand{\gad}{g^*}
\newcommand\be{\begin{equation}}
\newcommand\ee{\end{equation}}
\newcommand\bea{\begin{eqnarray}}
\newcommand\eea{\end{eqnarray}}
\newcommand\bi{\begin{itemize}}
\newcommand\ei{\end{itemize}}
\newcommand\ben{\begin{enumerate}}
\newcommand\een{\end{enumerate}}
\newcommand\bes{\begin{equation*}}
\newcommand\ees{\end{equation*}}
\def\({\left(}
\def\){\right)}
\def\C {{\mathbb C}}
\def\R {{\mathbb R}}
\def\Z {{\mathbb Z}}
\def\N {{\mathbb N}}
\def\intl{\int\limits}
\begin{document}

\title{$a$-Points of the Riemann zeta-function on the critical line}

\author{Stephen J. Lester}

\address{Department of Mathematics, University of Rochester, Rochester, NY 14627 USA}

\curraddr{School of Mathematical Sciences, Tel Aviv University, Tel Aviv, 69978 Israel}

\email{slester@post.tau.ac.il}

\subjclass[2010]{11M06, 11M26, 60F05.}

\thanks{The author was supported in part by the NSF grant DMS-1200582.}

\begin{abstract}
We investigate the proportion of the nontrivial 
roots of the equation $\z(s)=a$, which lie on the line $\Re s=1/2$ for $a \in \C$
not equal to zero. We show that at most one-half of these points lie on the line $\Re s=1/2$.
Moreover, assuming a spacing condition on the ordinates of zeros of the Riemann zeta-function,
we prove that
zero percent of the nontrivial solutions to $\z(s)=a$ lie on the line $\Re s=1/2$ for any
nonzero complex number $a$. 
\end{abstract}

\maketitle

\section{Introduction}

Let $s=\s+it$ be a complex variable, $\z(s)$ be the Riemann zeta-function, 
and $a$ be a nonzero complex number. The solutions
to $\z(s)=a$, which we will denote by $\r_a=\b_a+i\g_a$, are called $a$-points,
and their distribution has been widely studied. 
For principal references see \cite{T:1986}, \cite{L:1975}, and \cite{S:1992}.

For every $a$ there
is a $n_0(a)$ so that for all $n \geq n_0$ there is an $a$-point of $\z(s)$ quite close to $s=-2n$.
Moreover, in the half-plane $\s \leq 0$ there are only finitely many other $a$-points; and
we call the $a$-points with real part $\le 0$ trivial $a$-points. The remaining $a$-points all lie
in a strip $0< \s <A$, where $A$ depends on $a$, and are called nontrivial $a$-points. 
Let $\r_a=\b_a+i\g_a$ denote a nontrivial $a$-point. The number of these
is given by
\be \label{count apts}
N_a(T)=\sum_{1 < \g_a \leq T} 1=\frac{T}{2\pi}\l \frac{T}{2\pi}-\frac{T}{2\pi}+O(\l T),
\ee
for $a \neq 1$ (this holds for $a=0$ as well). In the case $a=1$ there is an additional $-(\log 2) T/(2\pi)$ term
on the right-hand side of the equation (see Levinson \cite{L:1975}).

By analogy with the case $a=0$, it is natural to investigate the distribution of the
nontrivial $a$-points. Let
$$
N_a(\s_1, \s_2; T)=\sum_{\substack{0 < \g_a \leq T \\  \s_1< \b_a <\s_2}} 1.
$$
For fixed $1/2<\s_1 < \s_2<1$ and $a \neq 0$ Borchsenius and Jessen \cite{B:1948} showed 
there exists a constant
$C(a,\s_1,\s_2)>0$ such that
\be \label{strip approx}
N_a(\s_1, \s_2; T) = C(a, \s_1, \s_2)T+o(T) \qquad \qquad (T \rightarrow \infty).
\ee
As for the case $a=0$, it is well-known that there can be at most $\ll T^{\theta(\s_1)}$,
with $\theta(\s_1)<1$, in such a strip (for instance, see Chapter IX of \cite{T:1986}). 

Levinson \cite{L:1975} studied $a$-points near the critical line and showed that for any $\d>0$
$$
\sum_{\substack{ 0< \g_a \leq T \\ 1/2-\d < \b_a <1/2+\d }} 1=\frac{T}{2\pi} \l T +O_{\d}(T),
$$
where the implied constant depends on $\d$. From \eqref{count apts},
it immediately follows that $ N_a(T)(1+o(1))$ $a$-points of $\z(s)$ lie in the strip $1/2-\d<\s<1/2+\d$, 
$1< t< T$, 
for any fixed $\d$. Therefore, the $a$-points of $\z(s)$ cluster near the critical line $\s=1/2$.

Selberg also studied the distribution of the $a$-points of $\z(s)$ near the critical line.
Under the assumption of 
the Riemann hypothesis, Selberg, in unpublished work, 
showed for $c>0$ and $\s=1/2-c\sqrt{\pi \l \l T}/ \l T$ that 
$$
\sum_{\substack{1 < \g_a \leq T \\ \b_a> \s}} 1 = \int_{-c}^{\infty} e^{-\pi x^2} dx \, \, N_a(T)(1+o(1))
$$
(a proof may be found in Tsang's thesis \cite{Ts:1984}).
By this we see that, assuming the Riemann hypothesis,  
about $1/2$ of the $a$-points of $\z(s)$ lie 
to the left of the line $\s=1/2$ at distances
of order $\sqrt{ \l \l T}/ \l T$. Taking $c \rightarrow 0^+$
slowly, it follows that
\be \label{selberg result}
N_a(0, 1/2; T) \geq \tfrac12 \cdot N_a(T)(1+o(1)).
\ee

Understanding the distribution of the remaining one-half of the $a$-points seems to be
quite difficult. Selberg \cite{S:1992} states that most of these points lie quite
close to the critical line at distances 
of order not exceeding $(\l \l \l T)^3/(\l T \sqrt{\l \l T})$
away from the critical line. 
Moreover, 
he conjectured that approximately one half of these lie 
to the left of the line $\s=1/2$ while the other half
lie to the right. 
That is, three-quarters of the nontrivial $a$-points lie to the left
of the critical line $\s=1/2$, while the remaining one-quarter lie to the right of the line.

On the critical line $\s=1/2$ we expect that there are very few $a$-points. In fact,
Selberg \cite{S:1992} has conjectured
that there are at most only finitely many $a$-points on
the critical line.
Observe that for almost all $a$ there are no $a$-points
of $\z(s)$ with $\b_a=1/2$. This is because the set 
$\{ \z(\tfrac12+it) : t \in \R \} \subset \C$, 
has two-dimensional Lebesgue measure zero. 
Recently, Banks \emph{et. al} \cite{Ba:2012} 
have shown that the curve $\{ \z(\tfrac12+it): t\in \R\}$
has countably many self-intersections. From this, it immediately follows that there are
only countably many numbers $a$ for which more than one $a$-point lies on the critical line. 
On the other hand, for every complex number $a$ we have from \eqref{selberg result} that 
the Riemann hypothesis implies that no more
than one-half of the $a$-points can lie on the line $\s=1/2$.

We will investigate the number of $a$-points that lie on the line $\s=1/2$ 
both unconditionally
and under the assumption of a spacing condition on 
the ordinates of zeros of the Riemann zeta-function. Unconditionally,
we show that for any nonzero complex number $a$ at most one-half of the
nontrivial $a$-points
of the Riemann zeta-function lie on the critical line. Furthermore, under the 
assumption of
a spacing condition
 we prove
that almost all of the $a$-points of $\z(s)$ do not lie on the line $\s=1/2$.

\section{Main Results}

Let us first introduce Hardy's $Z$-function
in the form
$$
Z(t)=e^{i\theta(t)}\z(\tfrac12+it),
$$
where $\theta(t)$ is the Riemann-Siegel theta function and is given by
$$
\theta(t)=\arg \Gamma(\tfrac14+i\tfrac{t}{2})-\frac{\l \pi}{2}t.
$$
Next, for any nonzero $a \in \C$ we write
$a=|a|e^{i\phi}$ with $-\pi < \phi \leq \pi$ and 
let $g=g(\phi)$ be a solution to
$$
\theta(t) \equiv -\phi  \pmod \pi.
$$ 
We call the points, $g$, \emph{shifted Gram points} and for any $n \in \Z$
we call the unique $g=g_n$ that satisfies $\theta(g_n)=\pi n-\phi$
the \emph{$nth$ shifted Gram point}.

Let
$$
\Psi=\tfrac12  \l \l T.
$$
Also, write $\mathbf 1_{S}(x)$ for the indicator function of the set $S$; that is $\mathbf 1_S(x)$ equals
one if $x \in S$ and equals zero otherwise.
A result of Selberg states for $\alpha < \beta$ that
$$
\frac{1}{T} \int_T^{2T} \mathbf 1_{[\alpha, \beta]} \Big( \l |\z(\tfrac12+it)|\, \Psi^{-1/2} \Big)dt = \frac{1}{\sqrt{2\pi}}
\int_{\alpha}^{\beta} e^{-x^2/2} dx +O\Big(\frac{(\l \Psi)^2}{\sqrt {\Psi }}\Big)
$$
(see \cite{S:1992} and \cite{Ts:1984}). Since shifted Gram points are regularly spaced (see Lemma \ref{simple lem}) 
it seems reasonable to expect that $\log |\zeta(\tfrac12+ig)| \Psi^{-1/2}$ is normally distributed
at the shifted Gram points $T< g \le 2T$. In fact,
Selberg, in unpublished work, proved that $\arg \zeta(\tfrac12+ig) \Psi^{-1/2}$ has
a normal limiting distribution.

However, there are extra considerations that need to be accounted for
when estimating the real part of the logarithm.  
For instance, the possible existence of Landau-Siegel zeros could cause the ordinates of zeros of $\zeta(s)$
to be distributed according to the Alternative Hypothesis as stated
in \cite{F:2013}. Hence, the presence
of Landau-Siegel zeros could imply that
a positive proportion of the shifted Gram points 
$T< g \le 2T$ are equal to, or at least extremely close to, ordinates of zeros
of $\zeta(s)$. This would show that $\log |\zeta(\tfrac12+ig)| \Psi^{-1/2}$ is not normally distributed
at these points.

For this reason we assume
\begin{hypWS}\label{hyp: WS}
For any $n \in \mathbb N$
$$
\lim_{\e \rightarrow 0^+} \limsup_{T \rightarrow \infty}
\frac{|\{T < \g, \g' \leq 2T : |\frac{(\g-\g') \l T}{2 \pi}-n|< \e \}|}{T \l T}=0,
$$
where $\g,\g'$ are ordinates of zeros of the Riemann zeta-function.
\end{hypWS}
We note that this spacing hypothesis is similar to other spacing
hypotheses made in \cite{BH2:1987}, \cite{H:1987}, and \cite{BH:1995}.
Additionally, we observe that  if Montgomery's Pair Correlation Conjecture is true
then so is Hypothesis~S.
\begin{MPCC}
For fixed $\alpha < \beta$ as $T \rightarrow \infty$
$$
\frac{1}{N(T)} 
\sum_{\substack{0 < \g, \g' \leq T  \\ \frac{ 2\pi\alpha}{\l T} \leq \g-\g' \leq \frac{ 2\pi\beta}{\l T}}} 1  \sim 
\int_{\alpha}^{\beta} 1-\Big(\frac{\sin \pi x}{\pi x}\Big)^2 \, dx
+\mathbf 1_{[\alpha, \beta]}(0).
$$
\end{MPCC}

To calculate the distribution function of $\log |\zeta(\tfrac12+ig)| \Psi^{-1/2}$  at
the shifted Gram points $T< g \le 2T$ we will assume the truth of Hypothesis~S.
However, using the method of B. Hough ~\cite{Hou:2011} we can {\em unconditionally} establish an upper bound for
the distribution function.

\begin{thm} 
\label{upper bd}
For fixed $\alpha \in \mathbb R$ we have as $T \rightarrow \infty$
\bes
\frac{1}{\frac{T}{2\pi} \log T} \sum_{\substack{T < g \leq 2T \\ g \neq \g}}
\mathbf 1_{[\alpha, \infty)} \Big( \l |\z(\tfrac12+ig)|\, \Psi^{-1/2} \Big) \le \frac{1}{\sqrt{2\pi}}
\int_{\alpha}^{\infty} e^{-x^2/2} dx +o(1).
\ees
\end{thm}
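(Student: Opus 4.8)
The plan is to bound the count of shifted Gram points where $\log|\z(\tfrac12+ig)|$ is large by relating it to a moment/mollifier computation, following Hough's method for the unconditional upper bound in the Gaussian distribution of $\log|\z|$. The starting point is Selberg's formula expressing $\log\z(\tfrac12+it)$ (away from zeros) in terms of a short Dirichlet polynomial over prime powers plus a contribution controlled by the number of zeros near height $t$. Concretely, one writes $\log|\z(\tfrac12+ig)| = \Re P_x(\tfrac12+ig) + E_x(g)$, where $P_x$ is a sum over primes (and prime squares) up to some $x = x(T)$, and the error $E_x$ is dominated in mean by $\sum_{|\g - g| \le 1/\log x} 1$ up to an explicit factor of $\log x / \log T$. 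The key structural point is that $\Re P_x(\tfrac12 + it)$ behaves like a sum of independent mean-zero random variables with total variance $\sim \Psi = \tfrac12\log\log T$, so a Gaussian upper tail should emerge from a high-moment (even-moment) estimate.

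The steps I would carry out, in order: (i) Record Selberg's representation for $\log\z$ with the explicit dependence on nearby zeros, and a companion bound showing that on average over $T < g \le 2T$ the error term $E_x(g)$ contributes negligibly — here one uses the density estimate $N(T+1)-N(T) \ll \log T$ together with the regular spacing of shifted Gram points from Lemma~\ref{simple lem}, so that $\sum_{T < g \le 2T} \big|E_x(g)\big| = o\big(\tfrac{T}{2\pi}\log T \cdot \Psi^{1/2}\big)$ for a suitable choice of $x$. (ii) Reduce the indicator $\mathbf 1_{[\alpha,\infty)}$: for the upper bound it suffices to estimate, for each fixed even integer $2k$ growing slowly with $T$, the quantity
\[
\sum_{\substack{T < g \le 2T \\ g \neq \g}} \big(\Re P_x(\tfrac12+ig)\big)^{2k},
\]
and then invoke Chebyshev/Markov: the set where $\Re P_x \ge (\alpha - o(1))\Psi^{1/2}$ has size at most $(\alpha\sqrt\Psi)^{-2k}$ times this moment. (iii) Evaluate the $2k$-th moment over shifted Gram points. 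Expanding $(\Re P_x)^{2k}$ gives a sum over $2k$-tuples of prime powers $p_1^{a_1}\cdots$; the diagonal terms reproduce the Gaussian moment $\frac{(2k)!}{2^k k!}\Psi^{k}(1+o(1))$, and one must show the off-diagonal terms are lower order. The mechanism for killing off-diagonals is a Gram-point analogue of the mean-value / orthogonality statement: a sum of the form $\sum_{T < g \le 2T} n^{-ig}$ over shifted Gram points is $O(\text{something small})$ unless $n = 1$. This is where $\theta(t)$ enters — since $\theta(g_n) = \pi n - \phi$, one parametrizes the sum over $g$ by $n$ and replaces $\sum_{n} n^{-i g_n}$ by an integral via partial summation / Euler–Maclaurin, using $\theta'(t) \sim \tfrac12 \log(t/2\pi)$; the integral $\int_T^{2T} m^{-it}\,\theta'(t)\,dt$ is then $O(\log T / \log m)$ by stationary phase / integration by parts, which for $m \le x$ a small power of $T$ is $o(T\log T)$. (iv) Optimize: let $k \to \infty$ slowly (e.g. $k \asymp \log\log\log T$) and $x = T^{1/k^2}$ or similar, so that the off-diagonal error in (iii) and the zero-contribution error in (i) are both negligible against the main term, and conclude $\sum \mathbf 1_{[\alpha,\infty)}(\cdots) \le \big(\tfrac{T}{2\pi}\log T\big)\big(\tfrac{1}{\sqrt{2\pi}}\int_\alpha^\infty e^{-x^2/2}dx + o(1)\big)$ by comparing the moment bound with the Gaussian moments and letting $k$ track $\alpha$ appropriately (the standard argument that matching all even moments of a sum of bounded independent pieces forces the Gaussian tail from above).

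The main obstacle I expect is step (iii), specifically controlling the off-diagonal terms in the moment expansion \emph{at the discrete set of shifted Gram points} rather than in a continuous $t$-average. In the continuous case one has genuine orthogonality of $n^{-it}$; over Gram points one only has approximate orthogonality with an error governed by the oscillatory integral $\int m^{-it}\theta'(t)\,dt$, and one has to ensure these errors, summed over all $\binom{2k}{k}$-many pairings and all prime-power tuples up to $x^{2k}$, stay below $\Psi^k T\log T$. This forces $x$ to be a very small power of $T$ (so that even the diagonal sum $\sum_{p \le x} 1/p \sim \log\log x$ still equals $\Psi(1+o(1))$ only after noting $\log\log x \sim \log\log T$ when $x = T^{o(1)}$ — this is fine), and it forces $k$ to grow slowly; balancing these is delicate but routine once the oscillatory integral bound is in hand. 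A secondary technical point is handling prime squares in $P_x$ (they contribute a bounded amount to the variance and can be absorbed) and making sure the excised Gram points $g = \g$ (those coinciding with a zero ordinate) are genuinely negligible in number, which again follows from the density estimate for zeros combined with Lemma~\ref{simple lem}.
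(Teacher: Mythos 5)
Your plan would (essentially) prove the conditional statement, but it does not give the unconditional Theorem \ref{upper bd}: the gap is in your step (i). Any Selberg/Tsang-type formula for $\l |\z(\tfrac12+it)|$ \emph{on the critical line} (Lemma \ref{tsang dir form}, first part) carries an error containing $\l^+\big(1/(\eta_t \l X)\big)$, where $\eta_t$ is the distance from $t$ to the nearest zero ordinate. The facts you invoke --- $N(t+1)-N(t)\ll \l t$ and the regular spacing of the shifted Gram points --- bound how many ordinates lie near a given $g$, but give no lower bound whatsoever on $\eta_g$ for $g\neq\g$. Unconditionally (e.g.\ in an Alternative Hypothesis scenario, as discussed in the paper) a positive proportion of the shifted Gram points in $(T,2T]$ could lie within, say, $e^{-\sqrt{\l\l T}}/\l T$ of ordinates, and at such points the error term is $\gg \Psi^{1/2}$, so neither $\sum_{T<g\le 2T}|E_x(g)|$ nor the size of the exceptional set can be controlled. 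Nor can you discard those points by arguing that proximity to a zero forces $|\z(\tfrac12+ig)|$ to be small: at distance $\approx 1/\l T$ the unconditional bounds on $\z'$ are far too weak for that. Ruling out this clustering is exactly the content of Hypothesis~S (via Lemma \ref{zeros lem}), which is why your route proves Theorem \ref{thm 2} but not Theorem \ref{upper bd}. Indeed, if your step (i) worked unconditionally you would obtain the full limiting distribution without any hypothesis, which is precisely what the paper argues cannot be expected. The paper's unconditional proof avoids the critical line altogether: it uses Hough's inequality (Lemma \ref{Hough}), $\l|\z(\tfrac12+it)|\le \l|\z(\s_{X,t}+it)|+\tfrac12(\s_{X,t}-\tfrac12)\l t+O(1)$, whose right-hand side involves the point $\s_{X,t}$ where Selberg's formula has an error free of $\eta_t$; the shift $(\s_{X,g}-\tfrac12)\l T$ is then shown to exceed $\e\Psi^{1/2}$ for only $o(T\l T)$ Gram points by Lemma \ref{zero density bd} and Chebyshev, and the Gaussian law for $\l|\z(\s_{X,g}+ig)|\Psi^{-1/2}$ over Gram points is established unconditionally (Proposition \ref{ch.f. sxt} plus L\'evy's continuity theorem). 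Some device of this kind --- an unconditional one-sided bound that eliminates the $\eta_t$ dependence --- is the missing idea in your proposal.

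Two secondary remarks. First, your discrete orthogonality ingredient, bounding $\sum_{T<g\le 2T} n^{-ig}$ by stationary phase/van der Corput, is sound and is in fact the paper's Lemma \ref{oscillation}; the paper then works with characteristic functions rather than high moments, transferring the Gram-point average to a $t$-integral by Stieltjes integration. Second, plain Markov with the $2k$-th moment at a \emph{fixed} threshold $\alpha$ only yields $\min_k (2k-1)!!\,\alpha^{-2k}$, which does not recover the sharp constant $\frac{1}{\sqrt{2\pi}}\int_\alpha^\infty e^{-x^2/2}\,dx$ and is useless for $\alpha$ near $0$ (the case needed for Corollary \ref{half}); you would need the full method of moments giving weak convergence of the Dirichlet-polynomial part, and then the comparison step --- which returns you to the gap above.
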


Assuming Hypothesis~S we show that the upper bound
is the best possible.
\begin{thm} 
\label{thm 2}
Assume Hypothesis~S. For fixed $\alpha < \beta$
$$
 \lim_{T \rightarrow \infty} \frac{1}{\frac{T}{2\pi} \l T} 
 \sum_{\substack{T < g \leq 2T \\ g\neq \g}}
\mathbf 1_{[\alpha, \beta]} \Big( \l |\z(\tfrac12+ig)|\, \Psi^{-1/2} \Big) = \frac{1}{\sqrt{2\pi}}
\int_{\alpha}^{\beta} e^{-x^2/2} dx .
$$
\end{thm}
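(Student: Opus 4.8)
The plan is to compute the distribution function of $\l|\z(\tfrac12+ig)|\,\Psi^{-1/2}$ at the shifted Gram points via the method of moments, in the spirit of Selberg's central limit theorem. First I would set up the Dirichlet-polynomial approximation: for $g$ in a set of shifted Gram points $T<g\le 2T$ of density $1+o(1)$, one has
$$
\l|\z(\tfrac12+ig)| = \Re \sum_{p \le X} \frac{1}{p^{1/2+ig}} + (\text{small error}),
$$
valid after excising an exceptional set of $g$ of size $o(T\l T)$ (those near zeros, or where $\z$ is atypically large); here $X = T^{1/(\l\l T)^2}$ or a similar slowly-growing cutoff. Since $\Psi = \tfrac12\l\l T \sim \tfrac12\sum_{p\le X} 1/p$, it suffices to show that the moments
$$
\frac{1}{\frac{T}{2\pi}\l T}\sum_{\substack{T<g\le 2T\\ g\ne\g}} \Big(\Re \sum_{p\le X}\frac{1}{p^{1/2+ig}}\Big)^{k} \Psi^{-k/2}
$$
converge, as $T\to\infty$, to the $k$-th moment of a standard Gaussian, for every fixed integer $k\ge 0$. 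Then the result follows by the standard moment-convergence argument (the normal distribution is determined by its moments) together with Theorem \ref{upper bd} (or a direct argument) to control the contribution of the excised $g$.

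The core of the argument is thus a mean-value estimate for $\sum_{T<g\le 2T} p^{-ig}q^{ig}$ over shifted Gram points. The key idea is to convert a sum over the Gram points $g_n$, $\theta(g_n)=\pi n-\phi$, into a contour integral or an exponential sum by the change of variables $n\mapsto g_n$: since $\theta'(t) = \tfrac12\l(t/2\pi)+O(1/t)$ and Gram points are spaced $\asymp 2\pi/\l T$ apart (Lemma \ref{simple lem}), one writes
$$
\sum_{T<g_n\le 2T} F(g_n) = \frac{1}{2\pi}\int_T^{2T} F(t)\,\theta'(t)\,dt + \sum_{m\ne 0}\int_T^{2T} F(t)\,e^{2im\theta(t)}\,\theta'(t)\,dt
$$
by Poisson-type summation (or partial summation against the counting function of Gram points, whose error is controlled by Hypothesis~S / the regularity of $\theta$). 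For $F(t)=(p/q)^{it}$ the main term $m=0$ contributes only the diagonal $p=q$, reproducing $\sum_{p\le X}1/p \sim 2\Psi$; the oscillatory terms $m\ne 0$ contain $e^{2im\theta(t)}(p/q)^{it}$, whose phase has derivative $m\l(t/2\pi) + \l(p/q)$, which is bounded away from zero for $p,q\le X=T^{o(1)}$, so each contributes $O(1/\l T)$ and the total over $m$ and over $p,q$ stays within the error budget. Carrying this through the full multinomial expansion of the $k$-th power gives Gaussian moments.

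The main obstacle is precisely the role of Hypothesis~S. It enters not in the moment computation above — which is essentially unconditional once $g$ ranges over \emph{all} Gram points — but in \emph{removing the exceptional set}: one must show that the shifted Gram points $g$ for which $\l|\z(\tfrac12+ig)|$ is \emph{not} well-approximated by the short Dirichlet polynomial form a set of density $o(T\l T)$. The danger, flagged in the introduction, is that a positive proportion of shifted Gram points could coincide with, or lie $\ll 1/\l T$ from, ordinates $\g$ of zeros of $\z$ (the Alternative-Hypothesis scenario), at which $\l|\z(\tfrac12+ig)|$ is very negative and the approximation fails catastrophically. Hypothesis~S rules exactly this out: it forces the zeros to avoid the arithmetic progression-like set of Gram spacings, so that only $o(T\l T)$ shifted Gram points are within $\e/\l T$ of a zero, and letting $\e\to 0$ slowly this exceptional set is negligible. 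A secondary technical point is handling the $g$ where $\z(\tfrac12+ig)$ is large (upper tail): here the unconditional bound of Theorem \ref{upper bd}, or a fourth-moment estimate for $\z$ on the critical line combined with the regularity of Gram points, suffices to show these contribute negligibly to each fixed moment.
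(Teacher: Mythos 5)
Your outline is essentially sound and its skeleton matches the paper's: approximate $\l|\z(\tfrac12+ig)|$ by a short Dirichlet polynomial over primes, use Hypothesis~S \emph{only} to show that the shifted Gram points lying too close to ordinates of zeros number $o(T\l T)$ (this is exactly the paper's Lemma \ref{zeros lem}, proved by an inclusion--exclusion/pigeonhole argument converting ``many $g$ near zeros'' into many pairs $\g,\g'$ with $(\g-\g')\l T/2\pi$ near a nonzero integer), and then show the polynomial is Gaussian along the regularly spaced points $g$. Where you differ is in the two technical engines. First, you propose the method of moments for the polynomial at Gram points, expanding $k$-th powers and estimating $\sum_{T<g\le 2T} r^{ig}$ by Poisson summation against $\theta$; the paper instead works with characteristic functions and L\'evy's continuity theorem (Proposition \ref{ch.f.}), writes $\sum_g F(g)=\int F\,d\mu$ with $\mu(t)=\frac{t}{2\pi}\l\frac{t}{2\pi e}+O(1)$ (Lemma \ref{ch.f. dir}), and thereby transfers the main term to a continuous $t$-average, where Tsang's moment computation (Lemmas \ref{moments lem}--\ref{normal lem}) applies; the exponential-sum input over Gram points (Lemma \ref{oscillation}, proved via an explicit Lambert-$W$ asymptotic for $g_n$ and van der Corput) is needed only for mean-value bounds on the \emph{error} terms of the approximation (Lemmas \ref{approx form}, \ref{ch. f.}). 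Your route is workable but heavier: for all fixed moments the polynomial length must shrink (the paper splits at $Y=T^{1/\Psi^4}$ and discards $Y<p\le X^3$ in $L^1$, since the Selberg--Tsang formula of Lemma \ref{tsang dir form} genuinely needs $X$ a fixed power of $T$ for its error analysis -- your single cutoff $X=T^{1/(\l\l T)^2}$ glosses over this), and your claim that each $m\ne 0$ Poisson term is $O(1/\l T)$ is too optimistic (a first-derivative test gives only $O(1/|m|)$; one needs a second-derivative/van der Corput treatment, as in Lemma \ref{oscillation}, though the resulting weaker bounds still fit the error budget). Also note that in the transfer step you should compare distributions of the polynomial and of $\l|\z|$ via an averaged ($L^1$) error bound plus a Chebyshev excision rather than literal moments of $\l|\z(\tfrac12+ig)|$ itself, whose lower tail near zeros is not controlled; your write-up points in this direction, and Theorem \ref{upper bd} is not actually needed for the conditional statement. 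What your approach buys is a self-contained, purely combinatorial moment computation over the Gram points themselves; what the paper's buys is the reuse of Tsang's continuous-average moments and a much lighter treatment of the main term, at the cost of invoking L\'evy's continuity theorem.
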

Establishing an upper bound 
on the rate of convergence to the Gaussian distribution in Theorem \ref{thm 2} would immediately lead to an 
improvement in Corollary \ref{thm 1}. However, we are unable to do so.
The limitation in our argument arises solely from Hypothesis~S.

Recall that it follows from the work of Selberg (see \cite{S:1992} and \cite{Ts:1984}) that
assuming the Riemann hypothesis at most one-half of the nontrivial $a$-points
can lie on the critical line $\sigma=1/2$.
From Theorem \ref{upper bd} we have that this holds unconditionally.
\begin{cor} \label{half}
For every nonzero complex number $a$ we have
\[
\frac{1}{N_a(T)} \sum_{\substack{  0<\g_a \leq T \\  \b_a=1/2 }} 1
\leq \frac12+o(1)   \qquad \qquad \qquad (T \rightarrow \infty).
\]
That is, at most one-half of the nontrivial $a$-points of the Riemann zeta-function lie on the critical line.
\end{cor}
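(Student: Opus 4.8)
The plan is to reduce the number of nontrivial $a$-points on the line $\s=1/2$ to a counting sum of exactly the shape bounded in Theorem~\ref{upper bd}, and then to let a truncation parameter tend to $0$.

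First I would observe that every nontrivial $a$-point $\tfrac12+i\g_a$ with $\b_a=1/2$ occurs at a shifted Gram point at which $|\z(\tfrac12+i\g_a)|=|a|$. Writing $a=|a|e^{i\phi}$ with $-\pi<\phi\le\pi$ and using $\z(\tfrac12+it)=e^{-i\theta(t)}Z(t)$ with $Z(t)\in\R$, the equation $\z(\tfrac12+i\g_a)=a$ forces $|Z(\g_a)|=|a|$ on taking absolute values --- so $|\z(\tfrac12+i\g_a)|=|a|$ --- and forces $\theta(\g_a)\equiv-\phi\pmod\pi$ on comparing arguments, i.e.\ $\g_a$ is a shifted Gram point $g$; conversely $|\z(\tfrac12+ig)|=|Z(g)|$ at any such $g$. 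Since $a\ne0$, such a $g$ is never a zero of $\z$, so the constraint $g\ne\g$ in Theorem~\ref{upper bd} costs nothing; and since $\z(\tfrac12+it)$ is single-valued in $t$, distinct $a$-points on the critical line have distinct ordinates. Hence $\sum_{0<\g_a\le T,\ \b_a=1/2}1$ is at most the number of shifted Gram points $g\in(0,T]$, $g\ne\g$, with $|\z(\tfrac12+ig)|=|a|$.

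Next I would exploit that $\Psi=\tfrac12\l\l T\to\infty$, whence $(\l|a|)\,\Psi^{-1/2}\to0$: for any fixed $\e>0$ and all large $T$, the equality $|\z(\tfrac12+ig)|=|a|$ implies $\l|\z(\tfrac12+ig)|\,\Psi^{-1/2}\in[-\e,\infty)$. Thus the count above is at most $\sum_{0<g\le T,\ g\ne\g}\mathbf 1_{[-\e,\infty)}(\l|\z(\tfrac12+ig)|\,\Psi^{-1/2})$. Summing Theorem~\ref{upper bd} (with $\a=-\e$) over the dyadic ranges $(T2^{-j-1},T2^{-j}]$ and estimating the contribution of the ranges near $0$ trivially by the number of shifted Gram points there (which is $o(T\l T)$), the geometric series $\sum_{j\ge0}2^{-j-1}=1$ gives $\sum_{0<g\le T,\ g\ne\g}\mathbf 1_{[-\e,\infty)}(\l|\z(\tfrac12+ig)|\,\Psi^{-1/2})\le\tfrac{T}{2\pi}\l T\big(\tfrac{1}{\sqrt{2\pi}}\int_{-\e}^{\infty}e^{-x^2/2}\,dx+o(1)\big)$.

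Finally, dividing by $N_a(T)\sim\tfrac{T}{2\pi}\l T$ from \eqref{count apts} and taking $\limsup_{T\to\infty}$ would yield, for every fixed $\e>0$, that $\limsup_{T\to\infty}N_a(T)^{-1}\sum_{0<\g_a\le T,\ \b_a=1/2}1\le\tfrac{1}{\sqrt{2\pi}}\int_{-\e}^{\infty}e^{-x^2/2}\,dx$; letting $\e\to0^+$, this bound decreases to $\tfrac{1}{\sqrt{2\pi}}\int_0^\infty e^{-x^2/2}\,dx=\tfrac12$, which is the assertion. Once Theorem~\ref{upper bd} is in hand I do not expect a genuine obstacle; the deduction is bookkeeping. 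The only points needing care are the order of the limits (one must fix $\e$, let $T\to\infty$, and only then let $\e\to0^+$), and --- if $a$-points are counted with multiplicity --- the remark that multiple $a$-points on the line lie among the zeros of $\z'(\tfrac12+it)$ at which $\z$ equals the single value $a$, so contribute negligibly; alternatively one counts $a$-points without multiplicity, which already suffices for the stated density.
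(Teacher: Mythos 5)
Your argument is correct and follows essentially the same route as the paper's proof: reduce critical-line $a$-points to shifted Gram points with $|\zeta(\tfrac12+ig)|=|a|$ (using that $Z$ is real), absorb the constant $\log|a|$ into the interval $[-\epsilon,\infty)$ because $\Psi\to\infty$, apply Theorem \ref{upper bd} with $\alpha=-\epsilon$, cover $(0,T]$ by dyadic blocks with a trivially estimated tail, and only then let $\epsilon\to0^+$. The one small point to tidy is that on a block $(T2^{-j-1},T2^{-j}]$ Theorem \ref{upper bd} normalizes by $\Psi_j=\tfrac12\log\log(T2^{-j-1})$ rather than your global $\Psi$, so you should either pass to the indicator inside each block (as the paper does, which works since the counted $g$ satisfy $\log|\zeta(\tfrac12+ig)|=\log|a|=O(1)=o(\Psi_j^{1/2})$) or note that $\Psi/\Psi_j\to1$ for each fixed $j$ and use $\alpha=-2\epsilon$; with that adjustment your bookkeeping, including the order of limits in $T$, the number of blocks, and $\epsilon$, matches the paper's.
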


Similarly, by Theorem \ref{thm 2} we have
the following
\begin{cor} \label{thm 1} Assume Hypothesis~S. For every nonzero $a\in \C$, 
zero percent of the nontrivial $a$-points of $\z(s)$ lie on the critical line.
That is,
$$
\lim_{T \rightarrow \infty} \frac{1}{N_a(T)} \sum_{\substack{  0<\g_a \leq T \\  \b_a=1/2 }} 1 =0.
$$
\end{cor}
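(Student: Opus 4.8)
The plan is to deduce the statement from Theorem \ref{thm 2} by relating $a$-points on the critical line to shifted Gram points. The key observation is that if $\rho_a = \tfrac12 + i\gamma_a$ is an $a$-point with $\beta_a = 1/2$, then $\zeta(\tfrac12 + i\gamma_a) = a = |a|e^{i\phi}$, so $|\zeta(\tfrac12+i\gamma_a)| = |a|$ and $Z(\gamma_a) = e^{i\theta(\gamma_a)}a$ is real (since $Z(t)$ is real for real $t$). The latter forces $\theta(\gamma_a) + \phi \equiv 0 \pmod \pi$, i.e.\ $\gamma_a$ is a shifted Gram point $g$. Thus every ordinate of an $a$-point on the critical line is among the shifted Gram points $g$, and at such a point $\log|\zeta(\tfrac12+ig)| = \log|a|$, a fixed constant. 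So I would first establish the inclusion
\[
\{ \gamma_a : 0 < \gamma_a \le T,\ \beta_a = 1/2\} \subseteq \{ g : 0 < g \le T\},
\]
and hence bound the left-hand count of the corollary by the number of shifted Gram points $g \le T$ at which $\log|\zeta(\tfrac12+ig)|$ equals the fixed value $\log|a|$.

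Next I would split the range $(0,T]$ dyadically, so it suffices to bound, for each dyadic block $T_0 < g \le 2T_0$ with $T_0$ ranging over $T/2, T/4, \dots$, the number of shifted Gram points $g$ in that block with $\log|\zeta(\tfrac12+ig)| = \log|a|$. Fix any $\delta > 0$. Since $\Psi = \tfrac12 \log\log T_0 \to \infty$, the single real value $\log|a| \cdot \Psi^{-1/2}$ eventually lies in an arbitrarily short interval $[\alpha,\beta]$ around $0$ of length $2\delta$; more precisely, for all large $T_0$ we have $\log|a|\cdot\Psi^{-1/2} \in [-\delta,\delta]$. Therefore the count of such $g$ is at most
\[
\sum_{\substack{T_0 < g \le 2T_0 \\ g \neq \gamma}} \mathbf 1_{[-\delta,\delta]}\Big( \log|\zeta(\tfrac12+ig)|\,\Psi^{-1/2}\Big) + (\text{number of } g=\gamma),
\]
and the first term is $\le \big(\tfrac{T_0}{2\pi}\log T_0\big)\big( \tfrac{1}{\sqrt{2\pi}}\int_{-\delta}^{\delta} e^{-x^2/2}\,dx + o(1)\big)$ by Theorem \ref{thm 2} (or already by Theorem \ref{upper bd}). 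Summing the dyadic contributions — the geometric series in $T_0$ converges and the $o(1)$ terms remain $o(1)$ after summation since there are only $O(\log T)$ blocks — gives that the count of $a$-points on the critical line up to $T$ is at most $\big(\tfrac{T}{2\pi}\log T\big)\big(\tfrac{1}{\sqrt{2\pi}}\int_{-\delta}^{\delta} e^{-x^2/2}\,dx + o(1)\big)$. Dividing by $N_a(T) \sim \tfrac{T}{2\pi}\log T$ from \eqref{count apts} yields a bound of $\tfrac{1}{\sqrt{2\pi}}\int_{-\delta}^{\delta} e^{-x^2/2}\,dx + o(1)$.

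Finally, since $\delta > 0$ was arbitrary and $\tfrac{1}{\sqrt{2\pi}}\int_{-\delta}^{\delta} e^{-x^2/2}\,dx \to 0$ as $\delta \to 0^+$, letting $\delta \to 0$ after taking the limsup in $T$ gives $\limsup_{T\to\infty} N_a(1/2,1/2;T)/N_a(T) \le \tfrac{1}{\sqrt{2\pi}}\int_{-\delta}^{\delta}e^{-x^2/2}\,dx$ for every $\delta$, hence the limit is $0$. I expect the main point requiring care — though it is not a deep obstacle — to be bookkeeping the error terms through the dyadic decomposition, and making sure the handful of shifted Gram points that coincide with zeros $\gamma$ of $\zeta$ (excluded in Theorems \ref{upper bd} and \ref{thm 2}) contribute negligibly; these are at most $O(\log T)$ per block by the near-regular spacing of Gram points in Lemma \ref{simple lem}, hence $O((\log T)^2) = o(T)$ in total, which is harmless. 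The essential input is entirely Theorem \ref{thm 2} (the Gaussian distribution of $\log|\zeta(\tfrac12+ig)|\Psi^{-1/2}$ shifted Gram points collapsing mass away from a fixed real value), and the corollary is a soft consequence of it.
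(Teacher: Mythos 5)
Your route is essentially the paper's own: identify the ordinates of critical-line $a$-points as shifted Gram points via the reality of $Z(t)$, note that at such points $\log|\zeta(\tfrac12+ig)|\Psi^{-1/2}$ is eventually trapped in $[-\delta,\delta]$, apply Theorem \ref{thm 2} to get $o(T\log T)$ per dyadic block, and finish with the dyadic decomposition and trivial tail bound already used for Corollary \ref{half}. In substance this is correct, but two of your side remarks need repair.

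First, the parenthetical ``or already by Theorem \ref{upper bd}'' is false: that theorem is one-sided, so with $\alpha=-\delta$ it only bounds your indicator sum by roughly $\tfrac{T_0}{2\pi}\log T_0\cdot\bigl(\tfrac12+o(1)\bigr)$, which recovers Corollary \ref{half} but not the present corollary; the two-sided limit of Theorem \ref{thm 2}, and hence Hypothesis~S, is genuinely needed (indeed the paper stresses that some hypothesis is necessary). Second, your claim that the shifted Gram points with $g=\gamma$ number only $O(\log T)$ per block ``by the near-regular spacing in Lemma \ref{simple lem}'' is unjustified: nothing unconditional prevents a positive proportion of the $g$ from coinciding with ordinates of zeros --- this is exactly the Landau--Siegel/Alternative Hypothesis scenario the paper raises as motivation for Hypothesis~S. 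Fortunately that correction term is unnecessary: every $g$ you are counting satisfies $\zeta(\tfrac12+ig)=a\neq0$, so it is not an excluded point, and you may pass directly from the count of $g$ with $|\zeta(\tfrac12+ig)|=|a|$ to the sum over $g\neq\gamma$, precisely as in the paper's inequality \eqref{apoint bd 2}. Finally, when you sum the dyadic blocks, ``$o(1)$ over $O(\log T)$ blocks'' is not by itself a valid justification (the $o(1)$ of Theorem \ref{thm 2} is not uniform down to bounded $T_0$); do as the paper does, keeping a fixed number $N$ of blocks, bounding the range $\gamma_a\le T/2^N$ trivially by $N_a(T/2^N)\ll 2^{-N}T\log T$, and letting $N\to\infty$ at the end.
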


By computing a mollified second moment of $\zeta(\tfrac12+ig)-a$ one may be able to give an alternative proof that a positive proportion of the $a$-points do not lie on the line $\s=1/2$.  However, due to the constraint on the 
length of the mollifier we believe that this would give
an inferior result to Corollary \ref{half}. 

As previously mentioned, an additional assumption on the zeros of $\zeta(s)$ is necessary
for the conclusion of Theorem \ref{thm 2} to hold. 
We wonder if
it is possible for
 the zeros of $\zeta(s)$ to be distributed in such a way so that a positive proportion of the 
nontrivial $a$-points lie on the line $\s=1/2$, for some $a \neq 0$.
For instance, if the Alternative Hypothesis as stated in \cite{F:2013} is true then
 does the conclusion of Corollary \ref{thm 1} still hold? It is possible to give examples of functions
$f(s)$ that are analytic in $0< \sigma< 1$, whose zeros are regularly spaced, lie on the line $\s=1/2$, and
for which many of the solutions
to $f(s)=a$ also lie on the line $\s=1/2$. Simple examples
of such functions are
$f(s)=a \sinh(s-\tfrac12)$ and $f(s)=\frac{|a|}{2} \chi(s)+\frac{a}{2}$ with $t>10$ (here $\chi$ is the functional equation factor).
It would be interesting to determine what other functions also have these properties.


We now prove the corollaries.

\begin{proof}[Proof of Corollary \ref{half}] 
For
any nonzero $a \in \C$ write
$a=|a|e^{i\phi}$ with $-\pi<\phi\leq \pi$.
Note that by the functional equation for $\z(s)$ it follows that $Z(t)$ is real.
Thus, at an $a$-point of the form $\r_a=1/2+i\g_a$, 
we have $Z(\g_a)=e^{i\theta(\g_a)}a=e^{i(\theta(\g_a)+\phi)}|a|$. Since
$Z(\g_a)$ is real we have that
$$
\theta(\g_a) \equiv -\phi  \pmod \pi,
$$ 
which implies that $\g_a$ is a shifted Gram point. Hence,
\begin{equation} \label{apoint bd 1}
\sum_{\substack{ T < \g_a \leq 2T \\ \b_a=1/2}} 1 \leq \sum_{\substack{T < g \leq 2T \\ |\z(\frac12+ig)|=|a|}} 1.
\end{equation}

Next, let $A= |\l |a||$ and note that for any $\e >0$, if $T$ is sufficiently large, then $2 A  < \e \sqrt{\Psi}$. Thus,
\begin{equation} \label{apoint bd 2}
\begin{split}
 \sum_{\substack{T < g \leq 2T \\ |\z(\frac12+ig)|=|a|}} 1  \leq & 
 \sum_{\substack{T < g \leq 2T \\ g \neq \g}}\mathbf 1_{[-2A, 2 A]} \Big( \l |\z(\tfrac12+ig)| \Big) \\
 \leq & 
 \sum_{\substack{T < g \leq 2T \\ g \neq \g}}\mathbf 1_{[-\e, \e]} \Big( \l |\z(\tfrac12+ig)|\Psi^{-1/2} \Big) \\
\leq & \sum_{\substack{T < g \leq 2T \\ g \neq \g}}\mathbf 1_{[-\e, \infty)} \Big( \l |\z(\tfrac12+ig)|\Psi^{-1/2} \Big) . 
 \end{split}
\end{equation}
Observe that by Theorem \ref{upper bd} we have
\[ 
\frac{1}{\frac{T}{2\pi} \log T}\sum_{\substack{T < g \leq 2T \\ g \neq \g}}\mathbf 1_{[-\e, \infty)} \Big( \l |\z(\tfrac12+ig)|\Psi^{-1/2} \Big) \leq \frac{1}{\sqrt{2\pi}}
\int_{-\varepsilon}^{\infty} e^{-x^2/2} dx +o(1)=\frac12+o(1)
\]
since $\e>0$ is arbitrary.
Combining this with \eqref{apoint bd 1} and \eqref{apoint bd 2}, we have
$$
\sum_{\substack{ T < \g_a \leq 2T \\ \b_a=1/2}} 1 \le \Big(\frac12+o(1)\Big) \frac{T}{2\pi}\log T.
$$

For any positive integer $N$ observe that we have
$$
\sum_{\substack{\frac{T}{2^N} < \g_a \leq T \\ \b_a=1/2 }} 1 
=\sum_{k=0}^{N-1} \sum_{\substack{\frac{T}{2^{k+1}} < \g_a \leq \frac{T}{2^k} \\ \b_a=1/2 }} 1 
\le \Big(\frac12 +o(1)\Big)\bigg(\sum_{k=0}^{N-1}\frac{T}{2^{k+1}(2\pi)} \l \frac{T}{2^{k+1}} \bigg)=
\Big(\frac12+o(1)\Big)\frac{T}{2\pi}\log T,
$$
since we may take $N$ arbitrarily large.
Recall that
$$
N_a(T)=\sum_{1< \g_a \le T} 1=\frac{T}{2\pi} \l \frac{T}{2\pi}+O(T).
$$
From this we see that
$$
\sum_{\substack{ 0 < \g_a \leq \frac{T}{2^N} \\ \b_a=1/2 }} 1 \leq N_a(T/2^N) \ll \frac{T}{2^N} \l T=o(T \l T).
$$
Corollary \ref{half} now follows.
\end{proof}
\begin{proof}[Proof of Corollary \ref{thm 1}] 
The argument is similar to the previous proof.
Assuming Hypothesis~S we have by Theorem \ref{thm 2} for any fixed
$\e>0$ that as $T \rightarrow \infty$
\bes
\begin{split}
\sum_{\substack{T < g \leq 2T \\ g \neq \g}}\mathbf 1_{[-\e, \e]} \Big( \l |\z(\tfrac12+ig)|\Psi^{-1/2} \Big) =
\frac{T \l T}{(2\pi)^{3/2}} \Bigg( \int_{-\epsilon}^{\epsilon} e^{-x^2/2} \, dx+o(1)\Bigg) 
=o(T \l T)
\end{split}
\ees
since $\e$ was arbitrary.
Using this estimate in  \eqref{apoint bd 2} we have by \eqref{apoint bd 1} that
\[
\sum_{\substack{ T < \g_a \leq 2T \\ \b_a=1/2}} 1=o(T \log T).
\]
The proof is completed along the same lines as before.
\end{proof}

\section{Preliminary Lemmas}
The following estimate is due to van der Corput and can 
be found in \cite{T:1986}.
\begin{lem} \label{vander}
Suppose that $f(u)$ is real and twice differentiable and that $f''(u) \approx \lambda$ on an 
interval $[a,b]$ with $b \geq a+1$. Then
$$
\sum_{a < n \leq b} \exp(2 \pi i f(n)) \ll (b-a)\lambda^{1/2}+\lambda^{-1/2}.
$$
\end{lem}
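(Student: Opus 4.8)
The plan is to follow the classical van der Corput argument: convert the exponential sum into a short sum of oscillatory integrals indexed by the integer values swept out by $f'$, and then bound each integral by the second-derivative test for integrals. To set up, note that $f''(u) \approx \lambda$ forces $f''$ to keep a fixed sign, so after replacing $f$ by $-f$ and conjugating the sum we may assume $\lambda \le f''(u) \le A\lambda$ on $[a,b]$ for an absolute constant $A$. Then $f'$ is strictly increasing; writing $\alpha = f'(a)$ and $\beta = f'(b)$ we have $\lambda(b-a) \le \beta - \alpha = \int_a^b f''(u)\,du \le A\lambda(b-a)$. Throughout put $e(x) = e^{2\pi i x}$.

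The main step is the transformation formula
$$
\sum_{a < n \le b} e(f(n)) = \sum_{\alpha - 1 < \nu < \beta + 1} \int_a^b e\bigl(f(u) - \nu u\bigr)\,du + O\bigl(\log(\beta - \alpha + 2)\bigr),
$$
a truncated Poisson-summation identity that is valid because $f'$ is monotone: for integers $\nu$ one has $e(f(n) - \nu n) = e(f(n))$, while for $\nu$ outside a fixed neighbourhood of $[\alpha,\beta]$ the quantity $f'(u) - \nu$ is of one sign and bounded away from $0$, so integration by parts bounds the corresponding integral by $O\bigl(1/\operatorname{dist}(\nu, [\alpha,\beta])\bigr)$ and these contributions sum to $O(\log(\beta - \alpha + 2))$ (the boundary-smoothing of the indicator of $(a,b]$ contributes a further $O(1)$). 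Next, for each integer $\nu$ set $g_\nu(u) = f(u) - \nu u$, so that $g_\nu'' = f''$ keeps a fixed sign with $|g_\nu''| \ge \lambda$; splitting the range of integration at the unique stationary point of $g_\nu$, estimating trivially on a window of length $\lambda^{-1/2}$ about it, and integrating by parts on the remainder (where $|g_\nu'(u)| \gg \lambda^{1/2}$) gives the standard bound $\bigl| \int_a^b e(g_\nu(u))\,du \bigr| \ll \lambda^{-1/2}$, in which only the lower bound on $f''$ is used.

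Finally I would combine these: the number of integers $\nu$ with $\alpha - 1 < \nu < \beta + 1$ is $\ll \beta - \alpha + 1 \ll (b-a)\lambda + 1$, so the main term is $\ll \bigl((b-a)\lambda + 1\bigr)\lambda^{-1/2} = (b-a)\lambda^{1/2} + \lambda^{-1/2}$, while the error $O(\log(\beta - \alpha + 2)) = O(\log((b-a)\lambda + 2))$ is absorbed into $(b-a)\lambda^{1/2} + \lambda^{-1/2}$ using $b - a \ge 1$: when $(b-a)\lambda \ge 1$ one has $\log((b-a)\lambda) \ll ((b-a)\lambda)^{1/2} \le (b-a)\lambda^{1/2}$, and when $(b-a)\lambda < 1$ one has $\lambda^{-1/2} > 1$. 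The step I expect to be the main obstacle is the transformation formula itself: making precise the truncation of the Poisson-summation sum, the first-derivative estimates for the off-diagonal frequencies, and the boundary smoothing is the only delicate point, and the rest is bookkeeping. An alternative route that avoids Poisson summation is to cut $[a,b]$ at the points where $f'$ takes half-integer values, estimate trivially the length-$O(\lambda^{-1/2})$ piece around each point where $f'$ is within $\lambda^{-1/2}$ of an integer, and apply the first-derivative test for sums (Kusmin--Landau) on the remaining pieces, where $\|f'\|$ is monotone and bounded below; this yields the same bound $\ll (b-a)\lambda^{1/2} + \lambda^{-1/2}$.
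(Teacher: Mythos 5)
Your proof is correct and is the classical van der Corput second-derivative test (truncated Poisson summation to pass to exponential integrals, the stationary-phase bound $\ll\lambda^{-1/2}$ per integral, then counting the $\ll(b-a)\lambda+1$ frequencies and absorbing the logarithmic error using $b-a\ge 1$); the paper gives no proof of its own, citing Titchmarsh, where precisely this argument appears. Only a cosmetic slip in your alternative Kusmin--Landau route: the trivially estimated windows should be those where $f'$ lies within $\lambda^{1/2}$ (not $\lambda^{-1/2}$) of an integer, so that each has length $\ll\lambda^{-1/2}$, and one should first dispose of the case $\lambda\ge 1$ by the trivial bound.
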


\begin{lem} \label{oscillation}
Let $T \geq 10$. Then for positive $x$ not equal to one
$$
\sum_{T < g \leq 2T} x^{ig} \ll \(T \frac{ |\l x|}{\l T}\)^{1/2}+\(T \frac{ \l^3 T}{|\l x|}\)^{1/2}+|\l x|.
$$
\end{lem}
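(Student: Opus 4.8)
The plan is to parametrize the sum by the index $n$ of the shifted Gram point and then appeal to van der Corput's estimate, Lemma~\ref{vander}. Recall that $g=g_n$ is determined by $\theta(g_n)=\pi n-\phi$, and that $\theta'(t)=\tfrac12\log\tfrac{t}{2\pi}+O(t^{-2})$ while $\theta''(t)=\tfrac1{2t}+O(t^{-3})$; in particular $\theta$ is strictly increasing and convex for $t\ge 10$, so $n\mapsto g_n$ is a smooth increasing function of $n$ over the range in question. Rewriting the condition $T<g\le 2T$ as $\theta(T)<\pi n-\phi\le\theta(2T)$, we obtain
$$
\sum_{T<g\le 2T}x^{ig}=\sum_{a<n\le b}e^{2\pi i f(n)},\qquad
f(n)=\frac{\log x}{2\pi}\,g_n,\quad a=\frac{\theta(T)+\phi}{\pi},\quad b=\frac{\theta(2T)+\phi}{\pi},
$$
where $b-a=\pi^{-1}\big(\theta(2T)-\theta(T)\big)\asymp T\log T$ is simply the number of shifted Gram points in the dyadic block; in particular $b\ge a+1$ once $T\ge 10$.

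The next step is to estimate $f''$. Differentiating $\theta(g_n)=\pi n-\phi$ once in $n$ gives $g_n'=\pi/\theta'(g_n)$, and differentiating again gives $g_n''=-\pi^2\theta''(g_n)/\theta'(g_n)^3$, so that
$$
f''(n)=-\frac{\pi\log x}{2}\cdot\frac{\theta''(g_n)}{\theta'(g_n)^3}.
$$
Since $g_n\in(T,2T]$, and on that interval $\theta'$ is positive and of order $\log T$ while $\theta''$ is positive and of order $T^{-1}$, the value $f''(n)$ keeps a fixed sign on $[a,b]$ and satisfies
$$
|f''(n)|\asymp\frac{|\log x|}{T\log^3 T}=:\lambda
$$
uniformly for $a<n\le b$. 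The point that deserves care is exactly this uniform two-sided bound: one checks that $\theta'$ and $\theta''$ stay nonzero on $[T,2T]$ when $T\ge 10$ and that $\sup_{[a,b]}|f''|/\inf_{[a,b]}|f''|$ is at most an absolute constant, which holds because $\log\tfrac{t}{2\pi}$ and $t$ each change by only a bounded factor over a dyadic interval.

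Granting $f''\asymp\lambda$, Lemma~\ref{vander} yields
$$
\sum_{T<g\le 2T}x^{ig}\ll (b-a)\lambda^{1/2}+\lambda^{-1/2}\ll\Big(T\frac{|\log x|}{\log T}\Big)^{1/2}+\Big(T\frac{\log^3 T}{|\log x|}\Big)^{1/2}
$$
after inserting $b-a\asymp T\log T$ and simplifying the powers of $T$ and $\log T$. This already gives (a bound stronger than) the first two terms of the asserted estimate, so the term $|\log x|$ is redundant from this argument; it is at worst the size of the contribution one would encounter from the smooth main term $\tfrac1\pi\int_T^{2T}x^{it}\theta'(t)\,dt$ if one instead proceeded by partial summation against $\lfloor(\theta(t)+\phi)/\pi\rfloor$ and expanded the sawtooth $\{(\theta(t)+\phi)/\pi\}-\tfrac12$ in its Fourier series. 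Thus the only genuine obstacle is the uniform estimate $f''\asymp\lambda$; everything else is routine bookkeeping with the asymptotics of $\theta$.
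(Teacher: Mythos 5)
Your proof is correct, and it follows the same basic strategy as the paper --- both arguments reduce the lemma to van der Corput's second-derivative test (Lemma~\ref{vander}) applied to the sum over the index $n$, with the same $\lambda\asymp|\log x|/(T\log^3 T)$ and $b-a\asymp T\log T$ --- but you handle the phase function differently. The paper does not differentiate $g_n$ implicitly: it first replaces $g_n$ by $\tilde g_n$, the exact solution of the Stirling main term $\frac{\tilde g_n}{2\pi}\log\frac{\tilde g_n}{2\pi e}-\frac18+\frac{\phi}{\pi}=n$, expressed explicitly through the Lambert $W$-function, shows $g_n=\tilde g_n+O(1/(T\log T))$, and only then computes $F''$ from the explicit formula; the replacement of $g_n$ by $\tilde g_n$ over $\asymp T\log T$ terms is precisely what produces the extra $O(|\log x|)$ in the statement. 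You instead work with $g_n=\theta^{-1}(\pi n-\phi)$ directly, getting $f''(u)=-\frac{\pi\log x}{2}\,\theta''(g(u))/\theta'(g(u))^3\asymp|\log x|/(T\log^3 T)$ from the Stirling expansions of $\theta'$ and $\theta''$ on a dyadic interval; this is a mild but genuine simplification, it explains why the $|\log x|$ term is superfluous in your version, and the uniformity check you flag (that $\theta'\asymp\log T$, $\theta''\asymp 1/T$ on $(T,2T]$ for $T\ge 10$, so $f''$ has fixed sign and bounded ratio) is exactly the point that needs, and gets, justification. The only cosmetic caveat is the sign of $f''$ when $0<x<1$, which you should dispose of by conjugating (i.e.\ replacing $x$ by $x^{-1}$) or by noting the test only needs $|f''|\asymp\lambda$ with fixed sign --- the paper's own treatment is no more explicit on this point.
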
 
\begin{proof}
Our first step will be to derive an approximate formula for $g_{n}$. By
Stirling's formula, for $t>1$,
\be \label{stirling formula}
\frac{\theta(t)}{\pi}+\frac{\phi}{\pi}=\frac{t}{2\pi} \l \frac{t}{2\pi e}-\frac18+\frac{\phi}{\pi}+O(1/t).
\ee
We now let $\tilde g_{n}$ be defined via the equation
$$
\frac{\tilde g_{n}}{2\pi} \l \frac{\tilde g_{n}}{2\pi e}-\frac18+\frac{\phi}{\pi}=n.
$$
Next write $Z=\l (\tilde g_{n}/(2\pi e))$ so that
$$
\frac{n}{e}+\frac{1}{8e}-\frac{\phi}{\pi e}=Ze^Z.  
$$
Writing, $W(Z)$ for the Lambert $W$-function, which is the inverse function of $Ze^Z$, we have
$W\(\frac{n}{e}+\frac{1}{8e}-\frac{\phi}{\pi e}\)=Z=\l (\tilde g_{n}/(2\pi e))$, so that
$$
\tilde g_{n}=2\pi \exp\(1+W\(\frac{n}{e}+\frac{1}{8e}-\frac{\phi}{\pi e}\) \).
$$

Now suppose that $n > (\theta(T)+\phi)/ \pi$, so that
$g_{n}>T$. By definition,
$$
\(\frac{\tilde g_{ n}}{2\pi} \l \frac{\tilde g_{ n}}{2\pi e}-\frac18+\frac{\phi}{\pi}\)
-\(\frac{\theta(g_{ n})}{\pi}+\frac{\phi}{\pi} \)=n-n=0
$$
and by \eqref{stirling formula},
$$
\frac{\theta(g_{ n})}{\pi}=\frac{ g_{ n}}{2\pi} \l \frac{ g_{ n}}{2\pi e}-\frac{1}{8}+O(1/T).
$$
Consequently,
$$
\Big|\frac{ g_{ n}}{2\pi} \l \frac{ g_{ n}}{2\pi e}-\frac{\tilde g_{ n}}{2\pi} \l \frac{\tilde g_{ n}}{2\pi e}\Big| \ll 1/ T.
$$
Next consider the function $f(x)=x \l (x/(2\pi e))$. For any $x>y \geq 10$
$$
\int_y^x f'(t) \, dt \geq (x-y)\l \frac{y}{2\pi}.
$$
Hence, if $f(x)-f(y) \leq X$ then $x-y\leq X/\l (y/ 2 \pi )$. Thus,
for $n>(\theta(T)+\phi)/\pi$ we have $g_{ n}=\tilde g_{ n}+O(1/(T \l T))$. 
Let $A(t)=(\theta(t)+\phi)/\pi$ and note that
for real $a,b$ we have $|\exp(ib)-\exp(ia)|=|\int_a^b \exp(it) dt |\leq |b-a|$. Therefore,
\be \label{sum cancellation}
\sum_{T < g \leq 2T} x^{ig}=\sum_{A(T)< n \leq A(2T)} x^{ig_n}
=\sum_{A(T)< n \leq A(2T)} x^{i\tilde g_n}+O(|\l x|).
\ee

Let $F(u)=\l x \exp\(1+W\(\frac{u}{e}+\frac{1}{8e}-\frac{\phi}{\pi e}\) \)$
and observe that $\exp(2\pi i F(n))=x^{i\tilde g_n}$. Note that
$$
W'(u)=\frac{1}{\exp(W(u))(1+W(u))}=\frac{W(u)}{u(1+W(u))},
$$ 
and $W(u)\approx \l u$.  So that for $u \geq 10$
$$
F''(u)=\frac{-\l x \, W\(\frac{u}{e}+\frac{1}{8e}-\frac{\phi}{\pi e}\)}
{\(u+\frac{1}{8}-\frac{\phi}{\pi}\)\(1+W\(\frac{u}{e}+\frac{1}{8e}-\frac{\phi}{\pi e}\)\)^3}
\approx \frac{|\l x|}{u \l^2 u}.
$$
We take $a= A(T)$, $b=A(2T)$, $f(u)=F(u)$,
and $\lambda=|\l x|/(T \l^3 T)$. The result now follows from applying Lemma \ref{vander} to
the sum on the right-hand side of \eqref{sum cancellation}.
\end{proof}

Let
$$
N_g(T)=\sum_{0 < g \leq T} 1=\frac{T}{2\pi} \l \frac{T}{2\pi e}+O(1) \qquad \mbox{and}
\qquad N_g(T,2T)=N_g(2T)-N_g(T).
$$
\begin{lem}
Let $2 \leq x \leq T^{1/4}$ and for each prime $p$ let $a_p$ be a complex number. If
$$
a_p \ll \frac{\l p}{p^{1/2} \l x},
$$
then
\be \label{MV 1}
\sum_{T < g \leq 2T}  \bigg| \sum_{p \leq x} a_p p^{-ig} \bigg|^2 \ll T \l T.
\ee
Also, if
$$
a_p \ll 1,
$$
then
\be \label{MV 2}
\sum_{T < g \leq 2T}  \bigg| \sum_{p \leq x} \frac{a_p}{p^{1+2ig}} \bigg|^2 \ll T \l T.
\ee
\end{lem}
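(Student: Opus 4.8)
The plan is the classical diagonal-plus-off-diagonal dissection. First I would write the left-hand side of \eqref{MV 1} as
\[
\sum_{p,q \le x} a_p \overline{a_q} \sum_{T < g \le 2T} (p/q)^{-ig}.
\]
The diagonal $p=q$ contributes $N_g(T,2T)\sum_{p\le x}|a_p|^2$; since $N_g(T,2T) \ll T\l T$ by the asymptotic for $N_g$ recorded just above, and since under the hypothesis $a_p \ll \l p/(p^{1/2}\l x)$ one has
\[
\sum_{p\le x}|a_p|^2 \ll \frac{1}{(\l x)^2}\sum_{p\le x}\frac{(\l p)^2}{p} \le \frac{\l x}{(\l x)^2}\sum_{p\le x}\frac{\l p}{p} \ll 1
\]
by Mertens' theorem, the diagonal is already $\ll T\l T$, which matches the claimed bound. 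The same computation with $\sum_{p\le x}|a_p|^2 \ll \sum_p p^{-2} \ll 1$ disposes of the diagonal in \eqref{MV 2}.

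For the off-diagonal terms $p \ne q$ I would appeal to Lemma~\ref{oscillation} with its variable taken to be $p/q$ (for \eqref{MV 1}) or $(q/p)^2$ (for \eqref{MV 2}); both are positive and different from $1$. This bounds $\sum_{T<g\le 2T}(p/q)^{-ig}$ by a sum of three terms involving $|\l(p/q)|$, and the point is that this quantity is trapped between two elementary estimates: $|\l(p/q)| \le \l x$ because $2 \le p,q \le x$, and $|\l(p/q)| \ge \l(1+1/q) \gg 1/x$ because $p,q$ are distinct integers $\le x$. Hence each application of Lemma~\ref{oscillation} is $\ll (T\l T)^{1/2} + (xT\l^3 T)^{1/2} + \l x$. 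Using $\sum_{p\le x}|a_p| \ll x^{1/2}$ (from $p^{-1/2}\le x^{1/2}p^{-1}$ and Mertens), so that $\sum_{p\ne q}|a_p\overline{a_q}| \le \big(\sum_{p\le x}|a_p|\big)^2 \ll x$, the entire off-diagonal contribution to \eqref{MV 1} is
\[
\ll x\Big( (T\l T)^{1/2} + (xT\l^3 T)^{1/2} + \l x \Big) \ll T^{3/4}(\l T)^{1/2} + x^{3/2}T^{1/2}(\l T)^{3/2} + x\l x,
\]
which is $o(T\l T)$ since $x \le T^{1/4}$. This gives \eqref{MV 1}. For \eqref{MV 2} the weights $(pq)^{-1}$ make things easier: there $\sum_{p\ne q}|a_p\overline{a_q}|(pq)^{-1} \le \big(\sum_{p\le x}1/p\big)^2 \ll (\l\l x)^2$, and the off-diagonal is bounded by $(\l\l x)^2$ times the same bracket, again $o(T\l T)$.

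I expect no genuine obstacle here. The only mildly delicate ingredient is the lower bound $|\l(p/q)| \gg 1/x$ needed to control the second of the three terms produced by Lemma~\ref{oscillation}, but the smallness of $x$ (as small as $T^{1/4}$) means that even these crude estimates leave an ample power-of-$T$ margin below the diagonal term, which is $\asymp T\l T$; so the argument reduces to careful bookkeeping of standard prime sums.
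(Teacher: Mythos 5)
Your proposal is correct and follows essentially the same route as the paper: expand the square, bound the diagonal by $N_g(T,2T)\sum_p|a_p|^2\ll T\l T$, and control the off-diagonal terms via Lemma~\ref{oscillation} together with the lower bound $|\l(p/q)|\gg 1/x$, with $x\le T^{1/4}$ providing the power-of-$T$ margin. The only difference is bookkeeping --- you bound the oscillatory sum uniformly in $p\ne q$ and then sum $|a_p\overline{a_q}|$, whereas the paper keeps the $1/\sqrt{pq\,|\l(p/q)|}$ weights inside the double sum --- and both versions close with room to spare.
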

\begin{proof}
We begin by proving the first assertion. By Lemma \ref{oscillation},
\be \label{lem 1}
\begin{split}
\sum_{T < \ga \leq 2T} 
 \bigg| \sum_{p \leq x} a_p p^{-ig} \bigg|^2=&N_{\ga}(T, 2T) \sum_{p \leq x} |a_p|^2
+\sum_{\substack{ p, q \leq x \\ p\neq q}} a_p \bar{a}_q \sum_{T < \ga \leq 2T} \(\frac{p}{q}\)^{i\ga} \\ 
\ll & T \frac{\l T}{\l^2 x}\sum_{p \leq x} \frac{\l^2 p}{p}+T^{1/2} \frac{(\l T)^{3/2}}{\l^2 x} 
\sum_{\substack{ p, q \leq x \\ p\neq q}} \frac{\l p \l q}{\sqrt{pq| \l \frac{p}{q}|}}.
\end{split}
\ee
Since
$$
\sum_{p \leq x} \frac{\l^2 p}{p} \ll \l ^2 x,
$$
the first term above is $\ll T \l T$. To estimate the second term on the right-hand side,
observe that if $p< q \leq x$ 
$$
\l q-\l p = \int_p^q \frac{dt}{t} \geq \frac{q-p}{p} \geq \frac1x.
$$
Hence,
$$
\sum_{\substack{ p, q \leq x \\ p\neq q}} \frac{\l p \l q}{\sqrt{pq| \l \frac{p}{q}|}}
\ll x^{1/2} \bigg(\sum_{ p \leq x } \frac{\l p}{\sqrt{p}}\bigg)^2 \ll x^{3/2}.
$$
So that
$$
T^{1/2} \frac{(\l T)^{3/2}}{\l^2 x} 
\sum_{\substack{ p, q \leq x \\ p\neq q}} \frac{\l p \l q}{\sqrt{pq| \l \frac{p}{q}|}} 
\ll T^{1/2} x^{3/2} \frac{(\l T)^{3/2}}{\l^2 x} \ll T.
$$
Applying this in \eqref{lem 1} yields the first assertion of the lemma.

As for the second assertion we argue similarly to obtain
\bes 
\begin{split}
\sum_{T < \ga \leq 2T} 
\bigg| \sum_{p \leq x} \frac{a_p}{p^{1+2i\ga}}\bigg|^2
\ll  T \l T \sum_{p \leq x} \frac{1}{p^2}+T^{1/2} (\l T)^{3/2}
\sum_{\substack{ p, q \leq x \\ p\neq q}} \frac{1}{pq \sqrt{| \l \frac{p}{q}|}},
\end{split}
\ees
which, as before, is seen to be $\ll T \l T$.
\end{proof}
Next, we have
\begin{lem} \label{simple lem} 
For any $m, \ell \in \N$ satisfying $N_g(T) < m < \ell \leq N_g( 2T)$, we have
$$
(g_{\ell}-g_m)\frac{\l T}{2 \pi}=(\ell-m)(1+O(1/\l T)).
$$ 
\end{lem}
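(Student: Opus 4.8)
The plan is to exploit the \emph{exact} relation defining the shifted Gram points against the asymptotics of $\theta'$. Since $\theta(g_n)=\pi n-\phi$, we have $\theta(g_\ell)-\theta(g_m)=\pi(\ell-m)$ with no error term whatsoever. On the analytic side, Stirling's formula gives $\theta'(t)=\tfrac12\log\tfrac{t}{2\pi}+O(1/t^2)$ for $t\ge 10$ (in the same spirit as \eqref{stirling formula}); in particular $\theta$ is strictly increasing for $t$ large, so the $g_n$ form an increasing sequence and $N_g$ is honestly their counting function.

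First I would convert the index conditions $N_g(T)<m<\ell\le N_g(2T)$ into a range for $g_m$ and $g_\ell$. Writing $A(t)=(\theta(t)+\phi)/\pi$, exactly as in the proof of Lemma \ref{oscillation}, the definition of $g_n$ gives $g_n>t\iff n>A(t)$, while comparing the Stirling expansion of $A(t)$ with $N_g(t)=\tfrac{t}{2\pi}\log\tfrac{t}{2\pi e}+O(1)$ shows $A(t)=N_g(t)+O(1)$. Since $\tfrac{d g_u}{d u}=\tfrac{\pi}{\theta'(g_u)}\asymp\tfrac{1}{\log g_u}$, changing the index by $O(1)$ moves a shifted Gram point by only $O(1/\log T)$; hence $g_m,g_\ell\in[\,T+O(1/\log T),\,2T+O(1/\log T)\,]$, and consequently
\[
\theta'(t)=\tfrac12\log\tfrac{t}{2\pi}+O(1/t^2)=\tfrac{\log T}{2}\bigl(1+O(1/\log T)\bigr)\qquad\text{uniformly for }t\in[g_m,g_\ell].
\]

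Then I would integrate this uniform estimate over $[g_m,g_\ell]$:
\[
\pi(\ell-m)=\theta(g_\ell)-\theta(g_m)=\int_{g_m}^{g_\ell}\theta'(t)\,dt=(g_\ell-g_m)\,\tfrac{\log T}{2}\bigl(1+O(1/\log T)\bigr).
\]
Solving for $g_\ell-g_m$ and using $\bigl(1+O(1/\log T)\bigr)^{-1}=1+O(1/\log T)$ yields $(g_\ell-g_m)\tfrac{\log T}{2\pi}=(\ell-m)\bigl(1+O(1/\log T)\bigr)$, which is the assertion. One could equally invoke the mean value theorem for $\theta$ in place of the integral, or argue directly from the approximation $g_n=\tilde g_n+O(1/(T\log T))$ and the closed form for $\tilde g_n$ derived in the proof of Lemma \ref{oscillation}. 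I do not expect a genuine obstacle: the only mildly delicate point is the bookkeeping that turns the conditions on $m,\ell$ into the interval for $g_m,g_\ell$, and that rests entirely on the comparison $A(t)=N_g(t)+O(1)$.
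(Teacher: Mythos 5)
Your argument is correct and is essentially the paper's proof: both rest on the defining relation $\theta(g_n)=\pi n-\phi$ together with Stirling's approximation for $\theta$ on the range $g_m,g_\ell\approx[T,2T]$, the paper differencing the expansions of $\ell=A(g_\ell)$ and $m=A(g_m)$ where you instead integrate $\theta'$ (the same computation via the mean value theorem). Your explicit bookkeeping $A(t)=N_g(t)+O(1)$ just spells out a step the paper leaves implicit.
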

\begin{proof}
By Stirling's formula, for $N_g(T) < \ell \leq N_g( 2T)$
$$
\ell=\frac{\theta(g_{\ell})}{\pi}+\frac{\phi}{\pi}=\frac{g_{\ell}}{2\pi}(\l T+O(1)).
$$
Thus,
$$
(\ell-m)=(g_{\ell}-g_m)\frac{\l T}{2\pi}(1+O(1/\l T)).
$$
\end{proof}

The next lemma is from K. M. Tsang's PhD thesis \cite{Ts:1984} and follows from the
zero density estimate 
$$
\sum_{\substack{0 < \g \leq T \\ \b> \s }} 1 \ll T^{1-(\s-1/2)/4} \l T
$$
due to Selberg \cite{Se:1946}.
\begin{lem} \label{Tsang zero}
Let $3 \leq \xi \leq T^{1/8}$. For $k\geq 0$ and $1/2 \leq \s \leq 1$, we have
$$
\sum_{\substack{  0 < \g \leq T \\  \b >\s}} (\b-\s)^k \mathbf \xi^{\b-\s} \ll 
T^{1-(\s-1/2)/4} (\l T)^{1-k},
$$
where the implied constant depends on $k$.
\end{lem}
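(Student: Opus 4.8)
The plan is to deduce this from Selberg's zero-density estimate by a dyadic/Abel-summation argument in the variable $\b-\s$. First I would set $\d=\b-\s$ and split the range of summation according to the size of $\d$. For the bulk, write $N(\s+u;T)=\sum_{0<\g\le T,\ \b>\s+u}1$, so that by Selberg's estimate $N(\s+u;T)\ll T^{1-(\s-1/2+u)/4}\l T = T^{1-(\s-1/2)/4}\l T\cdot \xi_0^{-u/4}$ where it is cleaner to keep the factor $T^{-u/4}=\exp(-\tfrac u4 \l T)$ explicit. The sum in question is
$$
S:=\sum_{\substack{0<\g\le T\\ \b>\s}} (\b-\s)^k\,\xi^{\b-\s}
=\int_0^{1/2+o(1)} (u^k \xi^u)'\big(N(\s;T)-N(\s+u;T)\big)\,du
$$
after integration by parts (the boundary terms vanish since $u^k\xi^u$ is $0$ at $u=0$ for $k\ge1$, and for $k=0$ one treats the constant separately, while $N(\s+u;T)=0$ for $u$ bounded away from $1/2$). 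Here $(u^k\xi^u)' = (ku^{k-1}+u^k\l\xi)\xi^u$.

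Next I would insert the bound $N(\s+u;T)\le N(\s;T)\ll T^{1-(\s-1/2)/4}\l T$ and, crucially, the decaying bound $N(\s+u;T)\ll T^{1-(\s-1/2)/4}\l T\cdot e^{-u(\l T)/4}$ for the part of the integral where the exponential gain beats the polynomial growth of $u^k\xi^u$. Since $\xi\le T^{1/8}$, we have $\xi^u\le e^{u(\l T)/8}$, so $(ku^{k-1}+u^k\l\xi)\xi^u \cdot e^{-u(\l T)/4}\ll (ku^{k-1}+u^k\l T)e^{-u(\l T)/8}$. Integrating over $u\in(0,\infty)$, the substitution $v=u\l T$ gives $\int_0^\infty (k v^{k-1}(\l T)^{-(k-1)}+v^k(\l T)^{-(k-1)})e^{-v/8}\,dv/\l T \ll (\l T)^{-k}$, with the implied constant depending only on $k$ (it is $\Gamma(k+1)$ times an absolute constant). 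Multiplying by the prefactor $T^{1-(\s-1/2)/4}\l T$ yields exactly $T^{1-(\s-1/2)/4}(\l T)^{1-k}$. For the short initial range, one may instead just bound $\xi^u\le \xi^{1/2}\le T^{1/16}$ crudely and absorb it, but the clean way is the single integral above.

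The main obstacle — really the only point requiring care — is getting the exponential decay into the estimate in a usable form: Selberg's bound $N(\s';T)\ll T^{1-(\s'-1/2)/4}\l T$ must be applied at $\s'=\s+u$ and the resulting $T^{-u/4}$ factor, i.e.\ $e^{-u(\l T)/4}$, must dominate the growth $u^k\xi^u = u^k e^{u\l\xi}$ with $\l\xi\le \tfrac18\l T$; the surplus exponent $\tfrac14-\tfrac18=\tfrac18>0$ is what drives convergence of the integral and produces the $(\l T)^{-k}$ saving after rescaling $v=u\l T$. One should also check the $k=0$ case separately, where the ``boundary term'' $N(\s;T)\ll T^{1-(\s-1/2)/4}\l T$ already gives the claim directly without integration by parts, matching $(\l T)^{1-0}=\l T$. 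The hypothesis $\xi\ge3$ is only needed so that $\l\xi>0$ and the Abel summation is legitimate.
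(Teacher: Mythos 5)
Your deduction from Selberg's density estimate is correct and is essentially the route the paper indicates (it cites Tsang's thesis, where the lemma is obtained from exactly this estimate): partial summation in $u=\beta-\sigma$, the bound $N(\sigma+u;T)\ll T^{1-(\sigma-1/2)/4}(\log T)\,e^{-u(\log T)/4}$, and the margin $\tfrac14-\tfrac18>0$ coming from $\xi\le T^{1/8}$ give the $(\log T)^{-k}$ saving after the substitution $v=u\log T$, with the $k=0$ boundary term $N(\sigma;T)\ll T^{1-(\sigma-1/2)/4}\log T$ handled separately as you note. One slip to fix in the write-up: the displayed identity should be $\sum_{\beta>\sigma}(\beta-\sigma)^k\xi^{\beta-\sigma}=\int_0^{1/2}\big(u^k\xi^u\big)'\,N(\sigma+u;T)\,du$ for $k\ge1$ (write $f(\beta-\sigma)=\int_0^{\beta-\sigma}f'(u)\,du$ and interchange sum and integral), not $\int\big(u^k\xi^u\big)'\big(N(\sigma;T)-N(\sigma+u;T)\big)\,du$, which for a single zero would produce $f(1/2)-f(\beta-\sigma)$ instead of $f(\beta-\sigma)$; your subsequent estimates are precisely those for the corrected identity, so the argument goes through as described.
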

For $X >0$ and $t \geq 2$ we define the number 
\be \label{sxt def}
\s_{X, t}=\frac12+2\max\Big(\b-\frac12, \frac{2}{\l X} \Big),
\ee
where the maximum is taken over $\r$ satisfying $|t-\g| \leq X^{3|\b-1/2|}/\l X$.
\begin{lem} \label{zero density bd}
Let $3 \leq \xi \leq T^{1/25}$ and $ X= T^{1/100}$. Then for $k\geq0$ we have
$$
\sum_{T < \ga \leq 2T} (\s_{X, \ga}-\tfrac12)^k \xi^{ \s_{X, \ga}-1/2} \ll T \l^{1-k} T,
$$
where the implied constant depends on $k$.
\end{lem}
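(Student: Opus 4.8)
\textbf{Proof proposal for Lemma \ref{zero density bd}.}

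The plan is to reduce the sum over shifted Gram points $g \in (T, 2T]$ to a sum over zeros of $\zeta(s)$, so that Lemma \ref{Tsang zero} can be applied. The key observation is that $\s_{X, g} - \tfrac12$ is, by definition \eqref{sxt def}, either equal to the benign value $4/\l X$ or else equal to $2(\b - \tfrac12)$ for some zero $\r = \b + i\g$ with $|g - \g| \le X^{3|\b - 1/2|}/\l X$ and $\b - \tfrac12 \ge 2/\l X$. First I would split the sum on the left-hand side accordingly. On the set of $g$ where the maximum in \eqref{sxt def} is attained by $2/\l X$, we have $\s_{X,g} - \tfrac12 = 4/\l X \asymp 1/\l T$ (since $X = T^{1/100}$) and $\xi^{\s_{X,g}-1/2} \le \xi^{4/\l X} = \xi^{400/\l T} \ll 1$ because $\xi \le T^{1/25}$; hence these terms contribute at most a bounded quantity per Gram point, and since $N_g(T, 2T) \ll T \l T$ by the formula preceding the lemma, their total contribution is $\ll (1/\l T)^k \cdot T \l T = T \l^{1-k} T$, as required.

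Next I would handle the remaining $g$'s, where $\s_{X,g} - \tfrac12 = 2(\b_g - \tfrac12)$ for some zero $\r_g = \b_g + i\g_g$ with $\b_g - \tfrac12 \ge 2/\l X$ and $|g - \g_g| \le X^{3(\b_g - 1/2)}/\l X$. For each such $g$ fix one such zero. The contribution to the left-hand side is
\bes
\sum_{\substack{T < g \le 2T \\ g \text{ bad}}} \big(2(\b_g - \tfrac12)\big)^k \xi^{2(\b_g - 1/2)} \ll \sum_{\substack{T < g \le 2T \\ g \text{ bad}}} (\b_g - \tfrac12)^k \xi^{2(\b_g - 1/2)}.
\ees
Now I would interchange the roles: group the bad Gram points according to which zero $\r$ they were assigned to. For a fixed zero $\r = \b + i\g$ with $\b - \tfrac12 \ge 2/\l X$, the number of shifted Gram points $g$ with $|g - \g| \le X^{3(\b - 1/2)}/\l X$ is, by Lemma \ref{simple lem} (shifted Gram points are spaced $\asymp 2\pi/\l T$ apart), at most $\ll 1 + X^{3(\b-1/2)}\l T/\l X \ll X^{3(\b - 1/2)}$ (the $1$ is absorbed since $\b - \tfrac12 \ge 2/\l X$ forces $X^{3(\b-1/2)} \ge X^{6/\l X} = e^6$, and $\l T / \l X = 100$ is bounded). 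Recalling $\g$ must satisfy $T - o(T) < \g \le 2T + o(T)$, so certainly $0 < \g \le 3T$, we obtain
\bes
\sum_{\substack{T < g \le 2T \\ g \text{ bad}}} (\b_g - \tfrac12)^k \xi^{2(\b_g - 1/2)} \ll \sum_{\substack{0 < \g \le 3T \\ \b > 1/2 + 2/\l X}} (\b - \tfrac12)^k \big(\xi^2 X^3\big)^{\b - 1/2}.
\ees

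Finally I would apply Lemma \ref{Tsang zero} with the parameter $\xi$ there replaced by $\xi^2 X^3$; this is legitimate provided $3 \le \xi^2 X^3 \le T^{1/8}$, which holds since $\xi \le T^{1/25}$ and $X = T^{1/100}$ give $\xi^2 X^3 \le T^{2/25 + 3/100} = T^{11/100} < T^{1/8}$. Writing $\s = \tfrac12 + 2/\l X$, Lemma \ref{Tsang zero} (applied with $T$ replaced by $3T$) yields a bound $\ll T^{1 - (1/\l X)/2}(\l T)^{1-k} = T^{1 - 50/\l T}(\l T)^{1-k} = e^{-50} T (\l T)^{1-k} \ll T \l^{1-k} T$. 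Combining the two parts completes the proof. The main obstacle is the interchange-of-summation step: one must be careful that the assignment $g \mapsto \r_g$ is well-defined and that the overcounting incurred by summing over all zeros near each $g$ (rather than a single chosen one) is controlled by the spacing of Gram points; the width $X^{3(\b - 1/2)}/\l X$ of the window grows with $\b$, but because Lemma \ref{Tsang zero} supplies the exponential savings $T^{-(\b - 1/2)/4}$ in the density of zeros, the product $X^{3(\b-1/2)} \cdot T^{-(\b-1/2)/4}$ stays under control precisely because $X = T^{1/100}$ makes $X^3 = T^{3/100} \ll T^{1/4}$.
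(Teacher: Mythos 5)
Your argument is correct and is essentially the paper's proof: split according to whether the maximum in the definition \eqref{sxt def} is attained at $2/\l X$ or at some $\b-\tfrac12$ with $\b-\tfrac12\ge 2/\l X$, convert the sum over shifted Gram points into a sum over zeros $0<\g\le 3T$ using the $\asymp 2\pi/\l T$ spacing (the paper introduces the counting function $G(\b+i\g)\ll X^{3(\b-1/2)}\l T/\l X$ for exactly this regrouping), and finish with Lemma \ref{Tsang zero} applied with parameter $\xi^2X^3\le T^{11/100}<T^{1/8}$. One small remark on the last step: the paper simply extends the sum to all $\b>1/2$ and applies Lemma \ref{Tsang zero} with $\s=1/2$, which matches the summand $(\b-\tfrac12)^k(\xi^2X^3)^{\b-1/2}$ literally and already gives $\ll T\l^{1-k}T$; with your choice $\s=\tfrac12+2/\l X$ the lemma bounds $(\b-\s)^k\xi^{\b-\s}$ rather than $(\b-\tfrac12)^k\xi^{\b-1/2}$, so strictly speaking a short extra comparison (e.g.\ $(\b-\tfrac12)^k\ll_k(\b-\s)^k+(\l X)^{-k}$ together with $(\xi^2X^3)^{2/\l X}\ll 1$) is needed before your stated bound follows -- a harmless fix, and the extra factor $e^{-50}$ it buys is not needed.
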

\begin{proof}
By the definition of $\s_{X,t}$, if for some $g$ we have
$\s_{X,g}>1/2+4/\l X$ then there is a $\r_0$ such that
$\b_0>1/2+2/\l X$ and $|g-\g_0| \leq X^{3(\b-1/2)}/\l X$. In particular, if  $T < g \leq 2T$,
then $0< \g_0 \leq 3T$. Let
\[
G(\b+i\g)=\Big|\Big\{T <\ga \leq 2T : |\ga-\g| \leq \frac{X^{3(\b-1/2)}}{\l X} \Big\}\Big|.
\]
Then
\be \label{sums bounding}
\begin{split}
\sum_{T < \ga \leq 2T}  (\s_{X, \ga}-\tfrac12)^k \xi^{ \s_{X, \ga}-1/2} \ll &
\sum_{T < \ga \leq 2T}  \frac{1}{\l^k X} 
+ \sum_{\substack{ 0< \g \leq 3T \\ \b > 1/2+\frac{2}{\l X} }} (\b-\tfrac12)^k \xi^{ 2(\b-1/2)} 
G(\b+i\g).
\end{split}
\ee
For any zero of $\z(s)$, we have that
\[
G(\b+i\g) \ll \frac{X^{3(\b-1/2)}\l T}{\l X},
\]
because the points g are regularly spaced approximately $(\l T)^{-1}$ apart.
Hence,
the right-hand side of \eqref{sums bounding} is
$$
\ll T \frac{\l T}{\l^k X} +\frac{\l T}{\l X}
\sum_{\substack{  0 < \g \leq 3T \\ \b > 1/2}} (\b-\tfrac12)^k (\xi^2 X^3)^{ \b-1/2}.
$$
Applying Lemma \ref{Tsang zero}, we see that both terms are
$$
\ll T \l^{1-k} T.
$$
\end{proof}

\begin{lem} 
\label{moments lem}
Let  
$m \in \mathbb{N}$ and $2 \leq Y \leq T^{1/m}$. Then
\bes
\frac1T\int_0^{T}  \bigg(\sum_{p \leq Y} \frac{\cos(t\l p)}{p^{1/2}}  \bigg)^m \, dt= 
\int\limits_{[0, 1]^{\pi(Y)}} \!  \bigg(\sum_{p \leq Y} \frac{\cos(2 \pi \theta_p)}{p^{1/2}} \bigg)^m \, d\theta  
+O\big(T^{-1/2}(cm)^{m/2} \big),
\ees
where $d\theta=\prod_{p \leq Y} d\theta_p$ and $c$ is an absolute constant. If $m=0$ this holds 
without the error term.
Furthermore, for $m \in \N$ we have 
\bes
\frac1T\int_0^{T}  \bigg|\sum_{p \leq Y} \frac{\cos(t\l p)}{p^{1/2}}  \bigg|^m \, dt,  \qquad
\int\limits_{[0, 1]^{\pi(Y)}} \!  \bigg|\sum_{p \leq Y} \frac{\cos(2 \pi \theta_p)}{p^{1/2}} \bigg|^m \, d\theta
\ll (\Psi cm)^{m/2},
\ees
where $c$ is a positive absolute constant.
\end{lem}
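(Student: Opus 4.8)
The plan is to expand the $m$-th power, integrate term by term, and compare with the corresponding integral over $[0,1]^{\pi(Y)}$, showing that the ``diagonal'' terms match exactly while the ``off-diagonal'' terms are negligible. Writing $\cos(t\l p) = \tfrac12(p^{it} + p^{-it})$, the expansion of $\big(\sum_{p\le Y} \cos(t\l p) p^{-1/2}\big)^m$ produces a sum over tuples $(p_1,\dots,p_m)$ of primes $\le Y$ and sign choices $\varepsilon_j\in\{\pm1\}$ of terms $2^{-m} (p_1\cdots p_m)^{-1/2} \big(p_1^{\varepsilon_1}\cdots p_m^{\varepsilon_m}\big)^{it}$. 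Integrating $\tfrac1T\int_0^T (\cdot)^{it}\,dt$ gives $1$ when $\prod p_j^{\varepsilon_j}=1$ and $O\big((T\,|\!\l(\prod p_j^{\varepsilon_j})|)^{-1}\big)$ otherwise. The key number-theoretic input is that since each $p_j\le Y\le T^{1/m}$, any nontrivial product $\prod p_j^{\varepsilon_j}$ satisfies $Y^{-m}\le \prod p_j^{\varepsilon_j}\le Y^m\le T$, so it is bounded away from $1$ by at least $\gg 1/T$ in logarithmic terms (more precisely $|\!\l \prod p_j^{\varepsilon_j}| \gg (\prod p_j)^{-1}\ge Y^{-m}$, hence the off-diagonal contribution of a single term is $\ll (\prod p_j)^{1/2}/T \le Y^{m/2}/T \le T^{-1/2}$). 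There are at most $(2\pi(Y))^m \le (cm)^m$ such terms for a suitable absolute constant (bounding $\pi(Y)$ crudely), which yields the stated error $O\big(T^{-1/2}(cm)^{m/2}\big)$ after noting $(cm)^m/T^{1/2}$ can be rewritten; one should be a little careful here and instead bound the number of terms more efficiently, grouping by the values of the exponents, but the crude bound suffices to reach an error of the claimed shape once $c$ is chosen appropriately.

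For the main term, the same expansion of $\big(\sum_{p\le Y}\cos(2\pi\theta_p)p^{-1/2}\big)^m$ and the orthogonality relation $\int_0^1 e^{2\pi i k\theta}\,d\theta = \mathbf 1_{k=0}$ show that $\int_{[0,1]^{\pi(Y)}}$ picks out exactly the tuples for which the formal product $\prod p_j^{\varepsilon_j}$ equals $1$ as a monomial in the variables $\theta_p$, i.e. each prime occurs with total exponent zero. Since the primes are multiplicatively independent, ``$\prod p_j^{\varepsilon_j}=1$ in $\mathbb R$'' is equivalent to ``each prime cancels'', so the diagonal sets coincide and the two main terms are literally equal. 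For the $m=0$ case both sides are $1$ with no error term, which is immediate.

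For the final moment bound, I would first treat the integral over $[0,1]^{\pi(Y)}$: there $X:=\sum_{p\le Y}\cos(2\pi\theta_p)p^{-1/2}$ is a sum of independent bounded mean-zero random variables with total variance $\tfrac12\sum_{p\le Y}p^{-1} = \tfrac12\l\l Y + O(1) \le \Psi + O(1)$, so by a standard moment estimate for sums of independent random variables (e.g.\ Rosenthal's inequality, or a direct combinatorial count of the even moments followed by Cauchy--Schwarz for odd $m$) one gets $\mathbb E|X|^m \ll (\Psi cm)^{m/2}$; this is where the factor $(\Psi cm)^{m/2}$ comes from and is the natural Gaussian-type bound since $X$ is approximately normal with variance $\asymp \Psi$. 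Then the bound for $\tfrac1T\int_0^T |\sum_{p\le Y}\cos(t\l p)p^{-1/2}|^m\,dt$ follows: for even $m$ it is immediate from the first part of the lemma (the main term is $\ll(\Psi cm)^{m/2}$ and the error $T^{-1/2}(cm)^{m/2}$ is dominated since $\Psi\to\infty$, after adjusting $c$), and for odd $m$ one applies Cauchy--Schwarz to reduce to the even exponent $m+1$, or $2m$, and uses the even case together with $\Psi^{1/2}\ll \Psi$. The main obstacle is keeping the constant $c$ in the error term and in the moment bound uniform and absolute while $m$ is allowed to grow with $T$ (only constrained by $Y\le T^{1/m}$); this forces the crude counting of prime tuples to be done carefully enough that the number of off-diagonal terms, weighted by their sizes, stays $\ll T^{-1/2}(cm)^{m/2}$, and it is the reason the hypothesis $Y\le T^{1/m}$ (rather than merely $Y\le T^{o(1)}$) is needed.
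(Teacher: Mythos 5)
The paper itself does not prove this lemma (it simply cites Lemma 3.4 of Tsang's thesis), so your attempt is necessarily self-contained. Your identification of the diagonal terms with the random-model integral via unique factorization is correct, and your route to the final moment bounds (direct computation of even moments for the random model, then the first part plus Cauchy--Schwarz for the $t$-integral) is sound in outline. The genuine gap is in the off-diagonal error estimate. The assertion ``there are at most $(2\pi(Y))^m\le (cm)^m$ such terms'' is false: $\pi(Y)$ is unrelated to $m$ (take $m$ fixed and $Y\to\infty$), and even granting it you would only reach $(cm)^m$, not $(cm)^{m/2}$. Moreover the honest version of your crude accounting --- summing your single-term bound $(p_1\cdots p_m)^{1/2}/T$ over all tuples and signs --- gives $\frac1T\big(\sum_{p\le Y}p^{1/2}\big)^m\asymp\frac1T\big(cY^{3/2}/\log Y\big)^m$, which for $Y^m$ near $T$ is of size roughly $T^{1/2}(c/\log Y)^m$; this misses the claimed $T^{-1/2}(cm)^{m/2}$ by essentially a factor of $T$, so no choice of $c$ rescues it (already for $m=2$ this crude bound only yields $T^{-1/4}$). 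Your parenthetical ``grouping by the values of the exponents'' does not fix this either, since the loss is analytic, not just a matter of recounting tuples.

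What is missing is the near-diagonal (Hilbert-inequality / Montgomery--Vaughan) mechanism. Writing a nontrivial product as $A/B$ with $A\le Y^k$, $B\le Y^{m-k}$ according to the sign pattern, one has $\frac{1}{\sqrt{AB}\,|\log(A/B)|}\le\sqrt{\tfrac{\max(A,B)}{\min(A,B)}}\cdot\frac{1}{|A-B|}$, and the point is that the first factor is $O(1)$ precisely for the near-diagonal pairs $A\approx B$ where $\frac{1}{|A-B|}$ is large; quantitatively, the bilinear generalized Hilbert inequality of Montgomery--Vaughan (with $\lambda_n=\log n$ and spacings $\asymp 1/n$, $n\le Y^m\le T$) bounds the contribution of each sign pattern by $\ll\frac1T\big(\sum_A r_k(A)^2\big)^{1/2}\big(\sum_B r_{m-k}(B)^2\big)^{1/2}$, where $r_k(A)$ counts ordered representations of $A$ as a product of $k$ primes $\le Y$. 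The combinatorial bound $\sum_A r_k(A)^2\le k!\,\pi(Y)^k$ then gives $\ll\sqrt{k!(m-k)!}\,\pi(Y)^{m/2}/T$, and summing over the $\binom{m}{k}$ sign patterns with the $2^{-m}$ prefactor produces exactly $(cm)^{m/2}T^{-1/2}$; this is where both the constant $(cm)^{m/2}$ and the hypothesis $Y\le T^{1/m}$ (through $\pi(Y)^{m/2}\le T^{1/2}$) genuinely enter. Without an input of this type --- or simply citing Tsang, as the paper does --- the error term is unproven, and its precise shape matters later, since Lemma \ref{character lemma} applies Lemma \ref{moments lem} with $m$ as large as $2\Psi^5$. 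A minor further point: for $m$ much larger than $\Psi$ the off-the-shelf Rosenthal constants do not yield $(\Psi cm)^{m/2}$; the direct even-moment computation you mention as an alternative does, so that is the version to use.
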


\begin{proof}
This is essentially Lemma 3.4 of \cite{Ts:1984}.
\end{proof}

\begin{lem} 
\label{character lemma}
Let $Y=T^{1/\Psi^4}$. For 
$|u| \leq \Psi^2$ we have
\bes
\begin{split}
\frac{1}{T}\int_T^{2T} \exp \bigg(i u \sum_{p \leq Y} \frac{\cos(t \l p)}{p^{1/2}} \Psi^{-1/2} \bigg) dt 
=&\intl_{[0,1]^{\pi(Y)}} \exp \bigg(i u \sum_{p \leq Y} \frac{\cos(2 \pi \theta_p)}{p^{1/2}} \Psi^{-1/2}  \bigg) d\theta \\
&+O\Big(|u|e^{-\Psi^5}\Big),
\end{split}
\ees
where $d\theta=\prod_{p \leq y} d\theta_p$. 
\end{lem}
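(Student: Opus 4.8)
The plan is to compare the exponential moment (the characteristic function of the Dirichlet polynomial $\sum_{p\le Y}\cos(t\log p)p^{-1/2}\Psi^{-1/2}$ averaged over $[T,2T]$) against the same average over the independent random model where each $\cos(t\log p)$ is replaced by $\cos(2\pi\theta_p)$ with $\theta_p$ uniform and independent. The natural device is to expand both exponentials in their power series and match term by term, using Lemma~\ref{moments lem} to control the mismatch. Writing $P(t)=\sum_{p\le Y}\cos(t\log p)p^{-1/2}$, the left-hand side is $\frac1T\int_T^{2T}\sum_{m\ge0}\frac{(iu\Psi^{-1/2})^m}{m!}P(t)^m\,dt$ and the right-hand side is $\sum_{m\ge0}\frac{(iu\Psi^{-1/2})^m}{m!}\int_{[0,1]^{\pi(Y)}}\big(\sum_p\cos(2\pi\theta_p)p^{-1/2}\big)^m d\theta$, so the difference is a sum over $m$ of $\frac{|u|^m\Psi^{-m/2}}{m!}$ times the difference of the two $m$-th moments.

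**First I would** fix the truncation point of the series at some $M$, say $M=\lceil\Psi^6\rceil$ or so. For $m\le M$ with $m\le T^{1/Y}$ — which holds because $Y=T^{1/\Psi^4}$ forces $P(t)^m$ to have ``length'' $\le Y^m \le T^{m/\Psi^4}\le T$ once $m\le \Psi^4$, and more carefully one checks the arithmetic-conductor constraint in Lemma~\ref{moments lem} is met for $m$ up to about $\Psi^4$ — the moment comparison in the first part of Lemma~\ref{moments lem} gives that the $m$-th moments agree up to $O(T^{-1/2}(cm)^{m/2})$. Summing the contribution of these terms, the error is $\ll \sum_{m\le M}\frac{|u|^m\Psi^{-m/2}}{m!}T^{-1/2}(cm)^{m/2}$; using $|u|\le\Psi^2$ and $m!\gg (m/e)^m$ this is bounded by $T^{-1/2}\sum_{m}\big(\tfrac{c'|u|\Psi^{-1/2}m^{1/2}e}{m}\big)^m = T^{-1/2}\sum_m\big(c'e|u|\Psi^{-1/2}m^{-1/2}\big)^m$, which converges and is $\ll |u|T^{-1/2}$, comfortably inside $O(|u|e^{-\Psi^5})$ once one is a touch more careful (indeed one can afford to replace $T^{-1/2}$ by $e^{-\Psi^6}$ by choosing $M$ slightly larger, since the per-term saving is $T^{-1/2}$ only while $m$ stays small — but for $m$ up to a power of $\Psi$ one simply needs $T^{-1/2}$ to beat $e^{-\Psi^5}$, which it does).

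**The main obstacle** is controlling the tails $m>M$ of \emph{both} series, since there the moment comparison of Lemma~\ref{moments lem} is no longer available (the arithmetic length $Y^m$ exceeds $T$). Here I would instead bound each tail separately and absolutely: on the arithmetic side, $\big|\frac{(iu\Psi^{-1/2})^m}{m!}\frac1T\int_T^{2T}P(t)^m dt\big| \le \frac{|u|^m\Psi^{-m/2}}{m!}\cdot\frac1T\int_0^{2T}|P(t)|^m dt$, and the second bound in Lemma~\ref{moments lem} — extended to all $m$, not just $m\le T^{1/m}$, which is legitimate because that second estimate follows from the sub-Gaussian tail of $P$ and holds unconditionally — gives $\frac1T\int_0^{2T}|P(t)|^m dt \ll (\Psi c m)^{m/2}$ (one may need the trivial pointwise bound $|P(t)|\le \sum_{p\le Y}p^{-1/2}\ll Y^{1/2}$ to handle the very largest $m$, but that regime is super-exponentially suppressed by $1/m!$). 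So the arithmetic tail is $\ll \sum_{m>M}\frac{|u|^m\Psi^{-m/2}(\Psi cm)^{m/2}}{m!}=\sum_{m>M}\frac{(|u|\sqrt{c m})^m}{m!}\ll \sum_{m>M}\big(\tfrac{c'|u|}{\sqrt m}\big)^m$; with $|u|\le\Psi^2$ and $m>M=\Psi^6$ each term is $\le (c'\Psi^2/\Psi^3)^m=(c'/\Psi)^m$, so the tail is $\ll (c'/\Psi)^{M}\ll e^{-\Psi^6}$, which absorbs into $O(|u|e^{-\Psi^5})$. The random-model tail is estimated identically using the second displayed bound of Lemma~\ref{moments lem}. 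Assembling the small-$m$ error and the two tails gives the claimed $O(|u|e^{-\Psi^5})$, completing the proof. The only real care needed is bookkeeping the constraint $m\le T^{1/\text{(length)}}$ when invoking the first part of Lemma~\ref{moments lem}, and choosing $M$ (a fixed power of $\Psi$) large enough that $(c'/\Psi)^M\le e^{-\Psi^5}$ while small enough that $T^{-1/2}$ still dominates $e^{-\Psi^5}$ in the matched range — both are satisfied by any $M$ between, say, $\Psi^6$ and $\Psi^{10}$ for $T$ large.
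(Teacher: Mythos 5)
Your overall strategy is the paper's: expand the exponential, compare the arithmetic and random moments term by term via Lemma \ref{moments lem}, and beat the remainder with the sub-Gaussian moment bound plus Stirling. The genuine gap is in how you dispose of the large-$m$ terms. Having expanded into the \emph{full} series and truncated at $M\asymp\Psi^6$, you need, on the arithmetic side, the bound $\frac1T\int_T^{2T}|P(t)|^m\,dt\ll(c\Psi m)^{m/2}$ for \emph{every} $m>M$, where $P(t)=\sum_{p\le Y}\cos(t\log p)p^{-1/2}$, and you justify this by asserting that the second estimate of Lemma \ref{moments lem} ``holds unconditionally'' for all $m$. That is unsupported: the estimate is stated (and proved, by the same diagonalisation that gives the matching) only under $2\le Y\le T^{1/m}$, i.e.\ only for $m\lesssim\Psi^4$ when $Y=T^{1/\Psi^4}$; beyond that range the off-diagonal terms are no longer under control, so you cannot cite it, and for very large $m$ (of size comparable to $\log T$) sub-Gaussian moment bounds of this shape are in fact expected to fail because of large values of $P$ on $[T,2T]$. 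Your fallback, the pointwise bound $|P|\ll Y^{1/2}=T^{1/(2\Psi^4)}$, only lets $1/m!$ win once $m\gg|u|\Psi^{-1/2}Y^{1/2}$, a quantity of sub-polynomial-in-$T$ size but vastly larger than any power of $\Psi$; the whole range $\Psi^6<m\ll T^{1/(2\Psi^4)}$ is left uncovered. There is a second, related mismatch which you notice but never resolve: the moment \emph{comparison} is available only for $m\lesssim\Psi^4$ (as you yourself say), yet you invoke it for all $m\le M=\Psi^6$, so the range $\Psi^4<m\le\Psi^6$ is unjustified on both counts. (A smaller slip: the matched-range error is not $\ll|u|T^{-1/2}$ --- the terms $(c|u|\Psi^{-1/2}m^{-1/2})^m$ can be as large as $e^{c\Psi^3}$ --- but since $T^{-1/2}e^{c\Psi^3}\ll e^{-\Psi^5}$ this costs nothing.)

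The paper's proof avoids ever needing moments of all orders: it uses the finite Taylor expansion with a single Lagrange-type remainder, $|e^{ix}-\sum_{n<N}(ix)^n/n!|\le|x|^N/N!$, so only moments of order at most $N$ appear, with $N$ a fixed power of $\Psi$ ($N=2\lfloor\Psi^5\rfloor$), and the $N$th absolute moment alone absorbs the entire tail. That device is the structural repair your argument needs. Even then the parameter bookkeeping is delicate: since $|u|$ may be as large as $\Psi^2$, one must take $N\gg|u|^2$, i.e.\ of order at least $\Psi^4$, and this has to be reconciled with the hypothesis $Y\le T^{1/N}$ of Lemma \ref{moments lem} --- with $Y=T^{1/\Psi^4}$ the paper's own choice of $N$ sits outside the literal hypothesis, so either $Y$ should be taken slightly smaller or the moment lemma verified in the slightly larger range. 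But that is a minor adjustment of exponents, whereas your version requires unconditional sub-Gaussian arithmetic moments of \emph{every} order, which is a step that would fail as argued above.
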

\begin{proof}
We expand the exponential function to see that
\bes
\begin{split}
\frac{1}{T}\int_T^{2T} \exp \bigg(i u \sum_{p \leq Y}  \frac{\cos(t \l p)}{p^{1/2}} \Psi^{-1/2} \bigg) dt 
=&\sum_{n=0}^{N-1} \frac{(iu \Psi^{-1/2})^n}{n!} \frac1T \int_T^{2T} \!  \bigg( \sum_{p \leq Y} \frac{\cos(t \l p)}{p^{1/2}}  \bigg)^n \, dt\\
&+O\bigg( \frac{|u \Psi^{-1/2}|^N}{N!} \frac1T \int_T^{2T} \!  \bigg| \sum_{p \leq Y} \frac{\cos(t \l p)}{p^{1/2}}  \bigg|^N \, dt \bigg).
\end{split}
\ees
By Lemma \ref{moments lem} we obtain
\bes
\begin{split}
\frac{1}{T}\int_T^{2T} \exp \bigg(i u \sum_{p \leq Y} \frac{\cos(t \l p)}{p^{1/2}} \Psi^{-1/2} \bigg) dt 
=&\sum_{n=0}^{N-1} 
\frac{(iu \Psi^{-1/2})^n}{n!} 
\int\limits_{[0, 1]^{\pi(Y)}} \!  
\bigg(\sum_{p \leq Y} \frac{\cos(2 \pi \theta_p)}{p^{1/2}} \bigg)^n \, d\theta \\
&+O \bigg(\frac{|u|^N (cN)^{N/2}}{N!} \bigg)+O\bigg(T^{-1/2}\sum_{n=1}^{N-1} \frac{|u|^n (c n)^{n/2}}{n!}\bigg).\\
\end{split}
\ees
This equals
\bes
\begin{split}
\intl_{[0,1]^{\pi(Y)}} \exp \bigg(i u \sum_{p \leq Y} \frac{\cos(2 \pi \theta_p)}{p^{1/2}} \Psi^{-1/2}  \bigg) d\theta
&+O \bigg(\frac{|u|^N (cN)^{N/2}}{N!} \bigg)+O\bigg(T^{-1/2}\sum_{n=1}^{N-1} \frac{|u|^n (c n)^{n/2}}{n!}\bigg).
\end{split} 
\ees

We now take $N=2\lfloor \Psi^5 \rfloor$ so that $Y \leq T^{1/N}$, and note that $|u| \leq \Psi^2$. 
We then find that
$$
\frac{|u|^N (cN)^{N/2}}{N!} \ll |u| \frac{\Psi^{2N-2} (cN)^{N/2}}{N!}\ll |u| (c N^{-1}\Psi^4 )^{N/2} \ll |u| e^{-\Psi^5},
$$
by Stirling's formula. The other \textit{O}-term is estimated along the same lines. 

\end{proof}

\begin{lem} 
\label{normal lem}
Let $X=T^{1/100}$. For $|u| \leq \Psi^{1/2}/100$, we have
\bes
\begin{split}
\frac{1}{T}\int_T^{2T} \exp \bigg(i u \sum_{p \leq X^3} \frac{\cos(t \l p)}{p^{1/2}}  \Psi^{-1/2} \bigg) dt
=& e^{-u^2/2}
\(1+O\(\frac{ u^2 \l \l \l T}{ \Psi}\)\)\\
 &+O \bigg( |u|  \bigg(\frac{\l \l \l T}{\Psi} \bigg)^{1/2} \bigg).
\end{split}
\ees
\end{lem}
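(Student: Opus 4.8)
The plan is to split the prime sum at $Y=T^{1/\Psi^4}$ (the threshold appearing in Lemma~\ref{character lemma}) and to handle the two ranges separately. Write
\[
S_1(t)=\Psi^{-1/2}\sum_{p\le Y}\frac{\cos(t\log p)}{p^{1/2}},\qquad S_2(t)=\Psi^{-1/2}\sum_{Y<p\le X^3}\frac{\cos(t\log p)}{p^{1/2}},
\]
so that the integrand in the lemma is $e^{iu(S_1(t)+S_2(t))}$. Since $|e^{iuS_2(t)}-1|\le|u|\,|S_2(t)|$, Cauchy--Schwarz gives
\[
\Big|\frac1T\int_T^{2T}e^{iu(S_1+S_2)}\,dt-\frac1T\int_T^{2T}e^{iuS_1}\,dt\Big|\le|u|\Big(\frac1T\int_T^{2T}|S_2(t)|^2\,dt\Big)^{1/2}.
\]
I would then expand $|S_2|^2$ using $\cos(t\log p)\cos(t\log q)=\tfrac12\cos(t\log(pq))+\tfrac12\cos(t\log(p/q))$: the diagonal averages to $\Psi^{-1}\big(\tfrac12\sum_{Y<p\le X^3}p^{-1}+O(1/T)\big)$, while for distinct $p,q\le X^3=T^{3/100}$ one has $|\log(p/q)|\gg T^{-3/100}$, so each off-diagonal integral is $O(T^{3/100}/T)$ and, after summing over $p\ne q$ and using $\sum_{p\le X^3}p^{-1/2}\ll X^{3/2}/\log X$, the off-diagonal total is $O(T^{-1+6/100})=o(1)$. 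By Mertens' theorem with $\log(X^3)=\tfrac{3}{100}\log T$ and $\log\log Y=\log\log T-4\log\Psi$, and since $\log\Psi=\log\log\log T+O(1)$ and $\log\log T=2\Psi$, we get $\sum_{Y<p\le X^3}p^{-1}=4\log\log\log T+O(1)$. Hence $\frac1T\int_T^{2T}|S_2|^2\,dt\ll(\log\log\log T)/\Psi$, which produces exactly the second error term $O\big(|u|(\log\log\log T/\Psi)^{1/2}\big)$ of the lemma.

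It then remains to treat $\frac1T\int_T^{2T}e^{iuS_1(t)}\,dt$. Since $|u|\le\Psi^{1/2}/100\le\Psi^2$, Lemma~\ref{character lemma} replaces this, up to an error $O(|u|e^{-\Psi^5})$, by the integral over $[0,1]^{\pi(Y)}$, which by independence of the $\theta_p$ and the identity $\int_0^1e^{ix\cos(2\pi\theta)}\,d\theta=J_0(x)$ equals $\prod_{p\le Y}J_0\big(u/(p^{1/2}\Psi^{1/2})\big)$. For $p\ge2$ and $|u|\le\Psi^{1/2}/100$ the argument of each Bessel factor has modulus at most $1/(100\sqrt2)<1$, so $\log J_0(x)=-x^2/4+O(x^4)$; summing and inserting $\sum_{p\le Y}p^{-1}=\log\log Y+M+o(1)=2\Psi+O(\log\log\log T)$ and $\sum_{p\le Y}p^{-2}\ll1$ gives
\[
\sum_{p\le Y}\log J_0\Big(\frac{u}{p^{1/2}\Psi^{1/2}}\Big)=-\frac{u^2}{4\Psi}\sum_{p\le Y}\frac1p+O\Big(\frac{u^4}{\Psi^2}\Big)=-\frac{u^2}{2}+O\Big(\frac{u^2\log\log\log T}{\Psi}\Big),
\]
where the bound $u^4/\Psi^2\le u^2/(10^4\Psi)$ is used to absorb the second error. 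Exponentiating yields $\prod_{p\le Y}J_0(\cdots)=e^{-u^2/2}\big(1+O(u^2\log\log\log T/\Psi)\big)$, and combining this with the first step (noting $|u|e^{-\Psi^5}$ is negligible next to the other errors) gives the lemma.

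I expect the only genuinely delicate point to be the final exponentiation: writing $e^{-u^2/2+E}=e^{-u^2/2}(1+O(E))$ is legitimate only when $E$ is bounded, whereas a priori $E=O(u^2\log\log\log T/\Psi)$ can grow. I would dispose of this by a case split. If the relevant implied constant times $u^2\log\log\log T/\Psi$ is at most $1$, the expansion is immediate. Otherwise $u^2\gg\Psi/\log\log\log T$; but then $e^{-u^2/2}$, and hence also $e^{-u^2/2+E}$ (recall $E$ is far smaller than $u^2$ in this range), is super-polynomially small in $\log\log T$, while at the same time $|u|(\log\log\log T/\Psi)^{1/2}\gg1$, so the right-hand side of the lemma is bounded below by a constant and the asserted identity holds trivially. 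Apart from this interplay between the Gaussian decay and the admissible range of $u$, the argument is routine second-moment and Mertens-type estimation.
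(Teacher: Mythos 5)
Your proposal is correct and takes essentially the same route as the paper: truncate the prime sum at $Y=T^{1/\Psi^4}$, control the tail on average (the paper bounds a first moment via Cauchy's inequality and Montgomery--Vaughan, where you compute the second moment directly by off-diagonal estimates), apply Lemma~\ref{character lemma}, and evaluate the resulting product of Bessel factors $J_0\big(u/(p\Psi)^{1/2}\big)$ using Mertens' theorem. Your closing case split justifying the final exponentiation when $u^2\log\log\log T/\Psi$ is large is a detail the paper passes over silently, but it does not alter the method.
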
 

\begin{proof}
For real $u, v$, we have $|\exp(iu)-\exp(iv)|=|\int_v^u \exp(it) dt| \leq |u-v|$. Hence,
letting $Y=T^{1/\Psi^4}$, we find that
$$
\frac{1}{T}\int_T^{2T}\exp \bigg(i u \sum_{p \leq X^3} \frac{\cos(t \l p)}{p^{1/2}} \Psi^{-1/2} \bigg) dt
$$
equals
\bes
\frac{1}{T}\int_T^{2T} \exp \bigg(i u \sum_{p \leq Y} \frac{\cos(t \l p)}{p^{1/2}} \Psi^{-1/2} \bigg) dt\\
+O \bigg(\frac{|u|}{\Psi^{1/2}} \frac{1}{T}\int_T^{2T} \bigg| \sum_{Y < p \leq X^3} \frac{\cos(t \l p)}{p^{1/2}} \bigg|dt  \bigg).
\ees
To estimate the integral in the error term we apply Cauchy's inequality 
and Montgomery and Vaughan's mean
value theorem for Dirichlet polynomials \cite{MVMVT:1974} to see that
\bes
\begin{split}
\int_T^{2T} \bigg| \sum_{Y < p \leq X^3} \frac{\cos(t \l p)}{p^{1/2}} \bigg| \, dt 
\ll & T \bigg(\sum_{Y < p \leq X^3} \frac1p\bigg)^{1/2}\\
\ll & T \bigg(\l \frac{3 \l X}{\l Y} \bigg)^{1/2} \\
\ll & T (\l \l \l T)^{1/2}.
\end{split}
\ees
Thus, by this and Lemma \ref{character lemma} it follows that
\be \label{x to y}
\begin{split}
\frac{1}{T}\int_T^{2T} \exp \bigg(  i u \sum_{p \leq X^3} \frac{\cos(t \l p)}{p^{1/2}} \Psi^{-1/2} \bigg) dt 
=&
\intl_{[0,1]^{\pi(Y)}} \exp \bigg(i u \sum_{p \leq Y} \frac{\cos(2 \pi \theta_p)}{p^{1/2}} \Psi^{-1/2}  \bigg) d\theta\\
 & +O\Big(|u|e^{-\Psi^5}\Big)
+O \bigg(|u| \bigg(\frac{\l \l \l T}{\Psi} \bigg)^{1/2} \bigg).
\end{split}
\ee

Next, observe that
\be \label{cf 1}
\begin{split}
\intl_{[0,1]^{\pi(Y)}} \exp \bigg(i u \sum_{p \leq Y} \frac{\cos(2 \pi \theta_p)}{p^{1/2}}  \Psi^{-1/2}  \bigg) d\theta 
=&\prod_{p \leq Y} \int_0^1 \exp\Big(i u \frac{\cos(2 \pi \theta_p)}{p^{1/2}} \Psi^{-1/2} \Big) d\theta_p\\
=&\prod_{p \leq Y} J_0\(\frac{ u}{(p\Psi)^{1/2}}\),
\end{split}
\ee
where $J_0(z)=\int_0^1 e^{iz \cos(2 \pi \theta)} d\theta$ is the zero$th$ Bessel function of the first kind. This function
also has the series expansion
$$
J_0(z)= \sum_{n=0}^{\infty} (-1)^n \frac{(\tfrac12 z)^{2n}}{(n!)^2}.
$$
Consequently, for $|z| \leq 1$ we have
$$
J_0(2z)=e^{-z^2}(1+O(|z|^4)).
$$
It follows that
\be \label{cf 2}
\prod_{p \leq Y} J_0\Big(\frac{ u}{(p\Psi)^{1/2}}\Big)=\exp \bigg(-\frac{u^2}{4\Psi} \sum_{p \leq Y} \frac1p   \bigg)
\prod_{p \leq Y}\Big(1+O\Big(\frac{ u^4}{p^2 \Psi^2}\Big)\Big).
\ee

A simple calculation shows that $\prod_{p \leq Y}(1+O(u^4/(p^2 \Psi^2)))=1+O(u^4/\Psi^2)$. Next, note that
$$
\frac{1}{\Psi} \sum_{p \leq Y} \frac1p=2\frac{\l \l Y+O(1)}{ \l \l T+O(1)}=2(1+O(\l \l \l T/\l \l T)).
$$
Thus, for $|u|\leq \Psi^{1/2}/100$, we have
\bes 
\begin{split}
\exp \bigg(-\frac{ u^2}{4\Psi} \sum_{p \leq Y} \frac1p   \bigg)
\prod_{p \leq Y} \Big(1+O\Big(\frac{ u^4}{p^2 \Psi^2}\Big)\Big)
=\exp( -u^2/2 )
\Big(1+O\Big(\frac{ u^2 \l \l \l T}{ \l \l T}\Big)\Big).
\end{split}
\ees
Therefore, by this, \eqref{x to y}, \eqref{cf 1}, and \eqref{cf 2}
the result follows.
\end{proof}

\section{An Approximate Formula for $\l |\z(\tfrac12+it)|$}
In \cite{Se:1946} Selberg proves an explicit formula for $S(t) =\pi^{-1}\arg \z(\tfrac12+it)$
in terms of a Dirichlet polynomial supported on prime numbers. The purpose of this section is to prove an analogous formula
for $\l |\z(\tfrac12+it)|$.

For any real number
$t$, let
$$
\eta_t=\min_{\g} |t-\g|.
$$
For $x \geq 2$ define
\bes
w_x(n)=
\begin{cases} 
  1 & \mbox{if $n \leq x,$} \\
  \frac{\l^2 (x^3/n)-2\l^2 (x^2/n)}{2 \l^2 x} & \mbox{if $x < n \leq x^2,$} \\
  \frac{\l^2 (x^3/n)}{2 \log^2 x} & \mbox{if $x^2 < n \leq x^3,$} \\
  0 & \mbox{if $n>x^3$}.
\end{cases}
\ees
Next, write $\l^+ x$ for the positive part of the logarithm, that is
$\l^+ x=\l x$ if $x > 1$ and $\l^+ x=0$ for $0 < x \leq 1$. Now, let
\be \label{definition of F}
\begin{split}
F(t; X)=&\Big( \frac{X^{(\frac12-\s_{X,t})/2}}{\l X}+(\s_{X,t}-\tfrac12) \Big) 
 \Big((\s_{X,t}-\tfrac12)\l X+\l^+ \frac{1}{\eta_t \l X}\Big) 
\end{split}
\ee 
where $\s_{X,t}$ is defined in \eqref{sxt def}.
Also, let
\be \label{definition of es}
E_1(t; X)
=\bigg|\sum_{n \leq X^3}
 \frac{\L(n) }{n^{\s_{X,t}+it}}w_X(n) \bigg|. 
\ee

We now cite
\begin{lem} 
\label{tsang dir form}
For  $T < t \leq 2T$ and $2 \leq X \leq T^{1/100}$ we have
\bes
\begin{split}
   \l |\z(\tfrac12+it)|=&\sum_{n \leq X^3}\frac{\L(n) \cos(t \l n)}{n^{\s_{X,t}} \l n}w_X(n)+
   O\Big( F(t; X)\Big(E_1(t;X)+\log T\Big)\Big).
\end{split}
\ees
Additionally, under the same hypotheses
\[
\l |\z(\sigma_{X,t}+it)|=\sum_{n \leq X^3}\frac{\L(n) \cos(t \l n)}{n^{\s_{X,t}} \l n}w_X(n)+
   O\bigg( \frac{X^{(\frac12-\s_{X,t})/2}}{\log X}\Big(E_1(t;X) +\log T\Big)\bigg).
\]
\end{lem}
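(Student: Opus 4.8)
The plan is to derive both identities from Selberg's explicit formula for $\z'/\z$ (see \cite{Se:1946}; a convenient version is in \cite{Ts:1984}), which expresses, for $s=\s+it$ with $\s\ge\tfrac12$ and $t$ large, $-\z'/\z(s)$ as a short Dirichlet polynomial plus a weighted sum over the nontrivial zeros: writing $\L_X(n)=\L(n)w_X(n)$, one has a formula of the shape
\bes
-\frac{\z'}{\z}(s)=\sum_{n\le X^3}\frac{\L_X(n)}{n^s}+\frac{1}{2\l^2 X}\sum_{\r}\frac{X^{3(\r-s)}-2X^{2(\r-s)}+X^{\r-s}}{(\r-s)^2}+O\Big(\tfrac{1}{\l X\,|s|^2}\Big),
\ees
the last term coming from the pole at $s=1$ and the trivial zeros. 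The smoothing $w_X$ is designed precisely so that the kernel $K(w)=\tfrac{1}{2\l^2 X}(X^{3w}-2X^{2w}+X^{w})w^{-2}$ is $\ll\l^2 X$ near $w=0$, is $\ll|w|^{-2}X^{\Re w}$ when $\Re w\le 0$, and decays like $|w|^{-2}X^{3\Re w}$ when $\Re w>0$. The role of $\s_{X,t}$ defined in \eqref{sxt def} is that, on the line $\Re s=\s_{X,t}$ and to the right of it, every zero influencing the sum lies comfortably to the \emph{left}, so its kernel term is genuinely small.

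For the second formula I would integrate the displayed identity in $\s$ from $\s_{X,t}$ to $+\infty$ and use $\z(\s+it)\to 1$. The Dirichlet polynomial integrates to $\sum_{n\le X^3}\L(n)w_X(n)\,n^{-\s_{X,t}-it}/\l n$, whose real part is exactly the claimed main term. For the zero-sum one uses \eqref{sxt def}: any zero with $|t-\g|\le X^{3|\b-1/2|}/\l X$ satisfies $\b-\tfrac12\le\tfrac12(\s_{X,t}-\tfrac12)$, so for every $\s\ge\s_{X,t}$ we have $\s-\b\ge\tfrac12(\s_{X,t}-\tfrac12)\ge 2/\l X$; any zero with $\b>\s$ is, again by \eqref{sxt def}, so far from $t$ that its kernel is negligible. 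Hence $|K(\r-s)|\ll X^{(1/2-\s_{X,t})/2}|\r-s|^{-2}$ throughout the relevant range, and integrating in $\s$ and summing over $\r$ produces a bound $\ll\frac{X^{(1/2-\s_{X,t})/2}}{\l X}\big(N(t)+\l T\big)$, where $N(t)$ counts the zeros within $O(\s_{X,t}-\tfrac12)$ of $t$. Bounding $N(t)\ll(\s_{X,t}-\tfrac12)(E_1(t;X)+\l T)$ by the argument principle together with the Dirichlet-polynomial approximation to $\z'/\z$ at $\s_{X,t}+it$ (whose modulus there is $E_1(t;X)$) then gives the stated error $O\big(\tfrac{X^{(1/2-\s_{X,t})/2}}{\l X}(E_1(t;X)+\l T)\big)$.

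For the first formula I would write
\bes
\l|\z(\tfrac12+it)|=\l|\z(\s_{X,t}+it)|-\int_{1/2}^{\s_{X,t}}\Re\frac{\z'}{\z}(\s+it)\,d\s,
\ees
insert the second formula for the first term, and estimate the integral directly. On $[\tfrac12,\s_{X,t}]$ zeros may lie near the line, so here I would use the classical $\Re\tfrac{\z'}{\z}(\s+it)=\sum_{|\g-t|\le 1}\Re\tfrac{1}{\s+it-\r}+O(\l T)$ and integrate termwise, obtaining
\bes
\int_{1/2}^{\s_{X,t}}\Re\frac{\z'}{\z}(\s+it)\,d\s=\tfrac12\sum_{|\g-t|\le 1}\l\frac{(\s_{X,t}-\b)^2+(t-\g)^2}{(1/2-\b)^2+(t-\g)^2}+O\big((\s_{X,t}-\tfrac12)\l T\big).
\ees
Splitting the sum according to whether $\b\le\tfrac12$ and isolating the single nearest zero (at distance $\eta_t$), one checks that each summand is $\ll(\s_{X,t}-\tfrac12)\l X+\l^{+}\tfrac{1}{\eta_t\l X}$, while the number of relevant zeros is $\ll\big(\tfrac{X^{(1/2-\s_{X,t})/2}}{\l X}+(\s_{X,t}-\tfrac12)\big)(E_1(t;X)+\l T)$, again by \eqref{sxt def}; the product of these two bounds is exactly $F(t;X)(E_1(t;X)+\l T)$ with $F(t;X)$ as in \eqref{definition of F}. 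Since $F(t;X)\gg\tfrac{X^{(1/2-\s_{X,t})/2}}{\l X}$, the error of the second formula is also absorbed, and the first identity follows.

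The main obstacle is the treatment of the sum over zeros, and specifically the conversion of the influence of zeros near $t$ into the quantities $F(t;X)$ and $E_1(t;X)$: one must exploit the defining property of $\s_{X,t}$ — that it sits at least twice as far from the critical line as any zero close to $t$ — uniformly in $t$, and bound local zero counts in terms of the prime sum $E_1(t;X)$, so that the final error term contains no explicit sum over zeros. Everything else is routine but lengthy; the argument follows Selberg's method as developed in \cite[\S 3]{Ts:1984}.
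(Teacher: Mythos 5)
The paper does not actually prove this lemma: it cites Tsang's thesis (Theorem 5.2) for the first formula and Selberg's equation (4.9) for the second, and your outline is precisely the Selberg--Tsang argument underlying those citations — the weighted explicit formula for $\zeta'/\zeta$ with the weights $w_X$, integration in $\sigma$ from $\sigma_{X,t}$ to $\infty$ (resp.\ from $\tfrac12$ to $\sigma_{X,t}$), and conversion of the local zero contributions into $E_1(t;X)+\log T$ and the factor $F(t;X)$ using the defining property of $\sigma_{X,t}$ and the term $\log^{+}\frac{1}{\eta_t \log X}$ for the nearest zero. So your approach is essentially the same as the (cited) proof; what remains is only the routine but lengthy bookkeeping in the zero-sum estimates, which you have sketched rather than carried out.
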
 
\begin{proof}
The first statement is proved in K. M. Tsang's PhD thesis (see Theorem 5.2 of \cite{Ts:1984}). 
The second formula is due to A. Selberg  (see equation (4.9) of \cite{Se:1946}).
\end{proof}

\begin{lem} 
\label{approx form}
For $T < t \leq 2T$ and $2 \leq X \leq T^{1/100}$ we have
\bes
\begin{split}
\l |\z(\tfrac12+it)|=& \sum_{p \leq X^3}\frac{ \cos(t \l p)}{p^{1/2}}+O\Big(F(t; X) \log T+E_2(t; X)+E_3(t; X)\Big)\\
&+O \bigg(F(t; X)X^{\s_{X, t}-\frac12}\int_{1/2}^{\infty} X^{1/2-u}  \bigg|
\sum_{n \leq X^3}
 \frac{\l p \l pX }{p^{u+it}}w_X(p) \bigg|du \bigg),
\end{split}
\ees
where  $F(t; T)$ is defined in \eqref{definition of F} and
$$
E_2(t; X)= \bigg|\sum_{p \leq X^3}\frac{(1-w_X(p))}{p^{1/2+it} } \bigg| \qquad \mbox{and} \qquad
E_3(t; X)= \bigg|\sum_{p \leq X^{3/2}}\frac{ w_X(p^2)}{p^{1+2it} } \bigg|.
$$
Additionally, under the same hypotheses 
\bes
\begin{split}
\l |\z(\sigma_{X,t}+it)|=& \sum_{p \leq X^3}\frac{ \cos(t \l p)}{p^{1/2}}+
O\bigg( \bigg(\frac{X^{(\frac12-\s_{X,t})/2}}{\log X}+(\s_{X,t}-\tfrac12) \bigg)\log T+E_2(t; X)+E_3(t; X)\bigg)\\
&+O \bigg(\bigg(\frac{X^{(\s_{X,t}-\frac12)/2}}{\log X}+(\s_{X,t}-\tfrac12)
X^{\s_{X, t}-\frac12} \bigg)\int_{1/2}^{\infty} X^{1/2-u}  \bigg|
\sum_{n \leq X^3}
 \frac{\l p \l pX }{p^{u+it}}w_X(p) \bigg|du \bigg).
\end{split}
\ees
\end{lem}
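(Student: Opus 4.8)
The plan is to start from the two formulas in Lemma \ref{tsang dir form}, which already express $\l|\z(\tfrac12+it)|$ (resp. $\l|\z(\s_{X,t}+it)|$) as the von Mangoldt Dirichlet polynomial $\sum_{n\le X^3}\L(n)\cos(t\l n)n^{-\s_{X,t}}(\l n)^{-1}w_X(n)$ plus an error of size $F(t;X)\big(E_1(t;X)+\l T\big)$ (resp. with $F$ replaced by the smaller factor $X^{(\frac12-\s_{X,t})/2}/\l X$). From there I would carry out two reductions. First, split off the prime-power terms: the contribution of $n=p^k$ with $k\ge 3$ is $O(1)$ trivially, the $k=2$ terms give exactly $\sum_{p\le X^{3/2}} w_X(p^2) p^{-1-2it}\cdot\tfrac12$ up to $O(1)$, which is absorbed into $E_3(t;X)$, and we are left with the sum over primes $\sum_{p\le X^3}\cos(t\l p)\,p^{-\s_{X,t}}(1/2\mbox{ from }\l p/\l p)\cdots$ — more precisely $\sum_{p\le X^3}\cos(t\l p)p^{-\s_{X,t}}w_X(p)$. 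Second, I would move the exponent $\s_{X,t}$ down to $1/2$ and remove the weight $w_X(p)$: writing $p^{-\s_{X,t}}=p^{-1/2}-\int_{1/2}^{\s_{X,t}} p^{-u}\l p\,du$ (and similarly unweighting via $1-w_X(p)$) produces the main term $\sum_{p\le X^3}\cos(t\l p)p^{-1/2}$, the error $E_2(t;X)$ coming from the $(1-w_X(p))$ piece, and an integral term $\int_{1/2}^{\s_{X,t}}$ of a prime Dirichlet polynomial; rescaling the integral to $\int_{1/2}^\infty X^{1/2-u}(\cdots)du$ and bounding the length-of-interval factor $\s_{X,t}-\tfrac12$ against $X^{\s_{X,t}-1/2}$ gives the stated shape. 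Throughout, the factor $F(t;X)$ (resp. $X^{(\frac12-\s_{X,t})/2}/\l X + (\s_{X,t}-\tfrac12)$) multiplying $E_1$ must be tracked, since $E_1(t;X)$ is itself essentially the integrand at $u=\s_{X,t}$, which is how the displayed $O$-term with the $\int_{1/2}^\infty$ arises.

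In more detail: after the prime-power split I would write the remaining sum as $\sum_{p\le X^3}\cos(t\l p)p^{-1/2}w_X(p) + \big(\sum_{p\le X^3}\cos(t\l p)(p^{-\s_{X,t}}-p^{-1/2})w_X(p)\big)$. The second bracket equals $-\int_{1/2}^{\s_{X,t}}\big(\sum_{p\le X^3}\cos(t\l p)(\l p)p^{-u}w_X(p)\big)du$; since $\s_{X,t}-\tfrac12\le \tfrac12\cdot\frac{2}{\l X}\cdot(\mbox{stuff})$ is small and the $u$-integrand is dominated near $u=\s_{X,t}$, I bound it by $(\s_{X,t}-\tfrac12)$ times a maximum, or more cleanly extend to $\int_{1/2}^\infty X^{1/2-u}\,du$-type integral after inserting $X^{\s_{X,t}-1/2}X^{1/2-u}\ge 1$ on $[1/2,\s_{X,t}]$; the $\l p\,\l(pX)$ and $w_X(p)$ in the displayed integrand are the weighted-Mellin artifacts coming from how $w_X$ interacts with the partial summation (these are exactly the terms that appear when one differentiates $w_X(n)$, which is a quadratic in $\l n$). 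For the unweighting, $w_X(p)=1$ for $p\le X$ so only $X<p\le X^3$ contributes to $E_2$, and one checks $|1-w_X(p)|\le 1$ there. The second (sigma-shifted) formula is handled identically, just with the smaller outer factor from the second part of Lemma \ref{tsang dir form}; the only change is bookkeeping of which power of $X^{(\s_{X,t}-1/2)/2}$ versus $X^{\s_{X,t}-1/2}$ multiplies the integral, and I would simply retain both, as in the statement.

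The main obstacle I anticipate is purely organizational rather than deep: keeping the weight $w_X$ and the variable exponent $\s_{X,t}$ bookkeeping consistent so that the error terms collapse to exactly $E_2(t;X)$, $E_3(t;X)$, and the single displayed integral, without spawning extra terms. In particular, one has to be careful that the outer factor $F(t;X)$ multiplying $E_1(t;X)$ in Lemma \ref{tsang dir form} — where $E_1$ is the full $\L(n)$-sum at $u=\s_{X,t}$, not just a prime sum — is correctly converted into the prime-sum integrand $\sum_{n\le X^3}\l p\,\l(pX)p^{-u-it}w_X(p)$ in the statement; this requires doing the prime-power reduction inside $E_1$ as well and then noting the $\L(n)$-to-$\l p$ and the Mellin-weight manipulations are uniform in $u\ge 1/2$. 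Everything else (bounding $\s_{X,t}-\tfrac12$, using $w_X\le 1$, the $O(1)$ from $k\ge3$ prime powers, partial summation in $u$) is routine and I would not grind through it.
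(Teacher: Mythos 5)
Your overall plan is the same as the paper's: start from Lemma \ref{tsang dir form}, split the von Mangoldt sum into primes, prime squares and higher prime powers, replace $w_X(p)$ by $1$ (giving $E_2$) and $\s_{X,t}$ by $\tfrac12$ (giving $E_3$ plus terms absorbed into $F(t;X)\log T$), and finally convert both the exponent-shift error and the prime part of $E_1(t;X)$ into the displayed integral. The genuine gap is the step that actually produces the stated integrand with the weight $\l p\,\l(pX)$. In the paper this comes from the exact identity
\bes
\sum_{p \leq X^3}\frac{\l p}{p^{\s+it}}\,w_X(p)
=X^{\s-1/2}\int_{\s}^{\infty} X^{1/2-u}\sum_{p \leq X^3}\frac{\l p \,\l (pX)}{p^{u+it}}\,w_X(p)\, du ,
\ees
which holds because $\int_{\s}^{\infty}(pX)^{-u}\l (pX)\, du=(pX)^{-\s}$ while $p^{-it}$ and $w_X(p)$ ride along unchanged. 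The paper first uses the mean value theorem to write the exponent-shift error as $(\s_{X,t}-\tfrac12)\big|\sum_p \l p\, p^{-\s^*-it}w_X(p)\big|$ with $\tfrac12\le\s^*\le\s_{X,t}$, then applies this identity (and extends the integral to $[1/2,\infty)$, bounding $X^{\s^*-1/2}\le X^{\s_{X,t}-1/2}$), and applies the same identity once more to the prime part of $E_1(t;X)$; together with $(\s_{X,t}-\tfrac12)\ll F(t;X)$ this is exactly how the prefactor $F(t;X)X^{\s_{X,t}-1/2}$ and the factor $\l(pX)$ arise.

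Your substitute for this step does not deliver the lemma as stated. Inserting $X^{\s_{X,t}-1/2}X^{1/2-u}\ge 1$ on $[1/2,\s_{X,t}]$ only yields the integrand $\big|\sum_p \l p\, p^{-u-it}w_X(p)\big|$, which is a different quantity: you cannot pass to the stated integrand termwise (an oscillating Dirichlet polynomial is not majorized by one with larger coefficients), nor is your bound dominated by the stated one, since $F(t;X)$ can be as small as order $1/\l X$. Moreover, your explanation that $\l p\,\l(pX)$ arises from ``differentiating $w_X(n)$'' is incorrect --- $w_X$ is never differentiated anywhere in the argument; $\l(pX)=\l p+\l X$ is purely an artifact of the identity above. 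The same identity is also what resolves the $E_1$-conversion issue you correctly flag but leave open. The remaining bookkeeping you describe is consistent with the paper (one small imprecision: replacing $p^{-2\s_{X,t}}$ by $p^{-1}$ in the prime-square sum costs $O((\s_{X,t}-\tfrac12)\l T)\ll F(t;X)\log T$, not $O(1)$), and the second formula is indeed obtained by the same argument with the smaller outer factor.
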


 \begin{proof}
We prove only the first assertion. The second statement follows from essentially the same argument.

From Lemma \ref{tsang dir form} we have that
$$
\l |\z(\tfrac12+it)|=\sum_{n \leq X^3}\frac{\L(n) \cos(t \l n)}{n^{\s_{X,t}} \l n}w_X(n)+O\Big( F(t; X)\Big(E_1(t;X)+\log T\Big)\Big)
$$
We now split the sum into 
a sum over primes, 
a sum over squares of primes,
and a sum over the higher prime powers. 
In the sum over primes we replace
the weight $w_X(n)$ with $1$
and $\s_{X, t}$ with $1/2$.
For the sum over squares of primes
we replace $\s_{X, t}$ with $1/2$, and
the sum over the higher prime powers is estimated trivially.
We also use the inequality $|\Re z|\leq |z|$.
In this way we find that
\be \label{formula}
\begin{split}
 \sum_{n \leq X^3}  \frac{\L(n) \cos(t \l n)}{n^{\s_{X, t}} \l n}  w_X(n)
=&\sum_{p \leq X^3}\frac{ \cos(t \l p)}{p^{1/2}} 
+ O\bigg( \bigg|\sum_{p \leq X^3}\frac{ (1-w_X(p))}{p^{1/2+it}} \bigg|\bigg)\\
&+  O\bigg( \bigg|\sum_{p \leq X^3}w_X(p) p^{-it}(p^{-\s_{X,t}}-p^{-1/2}) \bigg| \bigg)
+O\bigg( \bigg|\sum_{p \leq X^{3/2}}\frac{w_X(p^2) }{p^{1+2it}} \bigg|\bigg) \\ 
&+ O\bigg(  \bigg|\sum_{p \leq X^{3/2}}w_X(p^2) p^{-2it}(p^{-2\s_{X,t}}-p^{-1}) \bigg|\bigg)
+O\bigg(\sum_{\substack{p^r \leq X^3 \\ r >2 }} \frac{1}{r p^{r/2}}\bigg).
\end{split}
\ee
The first $O$-term is $\ll E_2(t;X)$ and the third $O$-term is $\ll E_3(t;X)$. 
Next, observe that
$$
\sum_{\substack{p^r \leq X^3 \\ r >2 }} \frac{1}{r p^{r/2}} \ll 1 \ll (\s_{X, t} -\tfrac12) \l T,
$$
so the last error term in \eqref{formula} is bound by $F(t;X)\log T$. To bound
the fourth $O$-term note that
\bes 
\begin{split}
\sum_{p \leq X^{3/2}}w_X(p^2) p^{-2it}(p^{-2\s_{X,t}}-p^{-1})
\ll & \sum_{p \leq X^{3/2}}\frac{1-p^{1-2\s_{X,t}}}{p} \\
\ll & (\s_{X, t}-\tfrac12) \sum_{p \leq X^{3/2}} \frac{\l p}{p} \\
\ll & (\s_{X, t}-\tfrac12) \l T,
\end{split}
\ees
which again is $\ll F(t;X) \log T$.

To complete the proof there are two things to show.
First, that the second error term in \eqref{formula}
is bounded by
\be \label{error bd}
F(t; X) \, X^{\s_{X, t}-1/2} \int_{1/2}^{\infty} X^{1/2-u} \bigg|
\sum_{p \leq X^3}
 \frac{\l p \l pX }{p^{u+it}}w_X(p)\bigg|du.
\ee
The second thing to show is that $F(t;X)E_1(t;X)$ is bounded
by \eqref{error bd} plus $F(t;X)\log T$. 

To begin, observe that
\bes
\begin{split}
\sum_{p \leq X^3}\frac{w_X(p) }{p^{it}} (p^{-\s_{X,t}}-p^{-1/2}) 
= & \int_{1/2}^{\s_{X, t}} \sum_{p \leq X^3}\frac{\l p}{p^{u+it}}w_X(p) du \\
\ll & (\s_{X, t}-\tfrac12) \bigg| \sum_{p \leq X^3}\frac{\l p }{p^{\s^*+it}} w_X(p) \bigg|,
\end{split}
\ees
where $\s^*=\s^*(t)$ lies between $1/2$ and $\s_{X,t}$. Next, we see that if
$1/2 \leq \s \leq \s_{X,t}$, then
\be \label{e1 bd}
\begin{split}
 \bigg| \sum_{p \leq X^3}\frac{\l p }{p^{\s+it}} w_X(p) \bigg|=&\bigg|X^{\s-1/2} 
\int_{\s}^{\infty} X^{1/2-u}  \sum_{p \leq X^3}\frac{ \l p \l Xp}{p^{u+it}} w_X(p) du \bigg| \\
\leq & X^{\s_{X,t}-1/2} 
\int_{1/2}^{\infty} X^{1/2-u} \bigg| \sum_{p \leq X^3}\frac{ \l p \l Xp}{p^{u+it}}w_X(p) \bigg| du. 
\end{split}
\ee
Combining these two estimates, we see that
\bes
\begin{split}
\sum_{p \leq X^3}\frac{w_X(p)}{p^{it}}(p^{-\s_{X,t}}- p^{-1/2}) 
\ll  (\s_{X, t} -\tfrac12)
X^{\s_{X,t}-1/2} \int_{1/2}^{\infty} X^{1/2-u}  \bigg| \sum_{p \leq X^3}\frac{w_X(p) \l p \l Xp}{p^{u+it}}  \bigg| du.
\end{split}
\ees

Finally, to complete the proof we bound $F(t;X)E_1(t;X)$ by \eqref{error bd} plus $F(t;X)\log T$.
To do this observe that
\bes
\begin{split}
F(t; X)\bigg|\sum_{n \leq X^3} \frac{\L(n) }{n^{\s_{X,t}+it}}w_X(n)\bigg| \leq &
F(t; X)\bigg|\sum_{p \leq X^3} \frac{\l p}{p^{\s_{X,t}+it}}w_X(p)\bigg|\\
 &+F(t; X)\bigg|\sum_{p \leq X^{3/2}} \frac{\l p}{p^{2\s_{X,t}+i2t}}w_X(p^2)\bigg| +O(F(t; X)).
\end{split}
\ees
Trivially, the second and third terms on the right-hand side above are $\ll F(t; X)\l T$.
Finally, by \eqref{e1 bd} we have
$$
 \bigg|\sum_{p \leq X^3} \frac{\l p}{p^{\s_{X,t}+it}}w_X(p) \bigg| 
\ll X^{\s_{X,t}-1/2} \int_{1/2}^{\infty} X^{1/2-u} 
 \bigg| \sum_{p \leq X^3}\frac{w_X(p) \l p \l Xp}{p^{u+it}}  \bigg| du.
$$
\end{proof}

We now cite an inequality due to B. Hough.
This enables us to establish an unconditional upper bound on the
distribution function of $\log |\zeta(\tfrac12+ig)| \Psi^{-1/2}$.
\begin{lem} \label{Hough}
Suppose $t >10$. For $t \neq \g$ we have
\begin{equation} \notag
\begin{split}
\log |\zeta(\tfrac12+it)|\le \log |\zeta(\sigma_{X,t}+it)|+\tfrac12 (\s_{X,t}-\tfrac12)\log t+O(1).
\end{split}
\end{equation}
\end{lem}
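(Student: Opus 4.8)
\emph{Outline of the plan.} The plan is to compare the two values of $\l|\z|$ by integrating $\Re(\z'/\z)$ along the horizontal segment from $\tfrac12+it$ to $\s_{X,t}+it$, reading off the main term from the gamma-factor in the explicit formula for $\z'/\z$. Since $t\ne\g$, the function $\z(u+it)$ is zero-free for $\tfrac12\le u\le\s_{X,t}$, so $u\mapsto\l|\z(u+it)|$ is smooth there and
\bes
\l|\z(\tfrac12+it)|=\l|\z(\s_{X,t}+it)|-\int_{1/2}^{\s_{X,t}}\Re\frac{\z'}{\z}(u+it)\,du.
\ees
From the Hadamard product for $\xi(s)$ together with Stirling's formula for $\Gamma'/\Gamma$ one has, uniformly for $\tfrac12\le u\le 2$ and $t$ large,
\bes
\Re\frac{\z'}{\z}(u+it)=-\tfrac12\l t+\sum_{\r}\frac{u-\b}{(u-\b)^2+(t-\g)^2}+O(1),
\ees
the sum over zeros $\r=\b+i\g$ converging absolutely. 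Integrating over $u\in[\tfrac12,\s_{X,t}]$, the term $-\tfrac12\l t$ contributes exactly $+\tfrac12(\s_{X,t}-\tfrac12)\l t$, which is the main term of the lemma; the $O(1)$ contributes $O(\s_{X,t}-\tfrac12)=O(1)$; and the zero at $\r$ contributes $\tfrac12\l\!\big((\tfrac12-\b)^2+(t-\g)^2\big)-\tfrac12\l\!\big((\s_{X,t}-\b)^2+(t-\g)^2\big)$. Thus the lemma is equivalent to the upper bound
\bes
\sum_{\r}\tfrac12\,\l\frac{(\tfrac12-\b)^2+(t-\g)^2}{(\s_{X,t}-\b)^2+(t-\g)^2}=O(1).
\ees

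The second step is to isolate the zeros that can make this sum large. A summand is $\le0$ as soon as $|\tfrac12-\b|\le|\s_{X,t}-\b|$, and since $\s_{X,t}>\tfrac12$ this holds whenever $\b\le\tfrac12+\tfrac12(\s_{X,t}-\tfrac12)$ --- in particular for every zero on or to the left of the critical line. Hence a zero contributes positively only if $\b-\tfrac12>\tfrac12(\s_{X,t}-\tfrac12)=\max\!\big(\b_0-\tfrac12,\ \tfrac2{\l X}\big)$, the maximum running over all $\r_0=\b_0+i\g_0$ with $|t-\g_0|\le X^{3|\b_0-1/2|}/\l X$. Taking $\r_0=\r$ would give $\b-\tfrac12>\b-\tfrac12$, a contradiction, so any positively contributing $\r$ is itself \emph{not} among the zeros occurring in \eqref{sxt def}; consequently $\b-\tfrac12>\tfrac2{\l X}$ and $|t-\g|>X^{3(\b-1/2)}/\l X$, whence $|t-\g|>e^6/\l X$ and $\b-\tfrac12<\frac{\l(|t-\g|\l X)}{3\l X}$. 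For each such zero I bound its summand by $\tfrac12\l\!\big(1+\tfrac{(\b-1/2)^2}{(t-\g)^2}\big)\le\tfrac{(\b-1/2)^2}{2(t-\g)^2}$, so that the whole problem reduces to showing that $\sum_{\r}(\b-\tfrac12)^2(t-\g)^{-2}=O(1)$, the sum now restricted to the positively contributing zeros.

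It remains to bound this last sum, and this is \textbf{the main obstacle}. I would sum it by a dyadic decomposition in both $\b-\tfrac12$ and $|t-\g|$, using the coupling $|t-\g|>X^{3(\b-1/2)}/\l X$ --- which forces zeros whose real part is not extremely close to $\tfrac12$ to lie correspondingly far from $t$ --- together with Selberg's zero-density estimate, already recorded in Lemma \ref{Tsang zero}; here one takes $X$ a fixed power of $T$ (as elsewhere in the paper, $X=T^{1/100}$, with $t\asymp T$, so $\l X\asymp\l t$). The delicate point is that the trivial bound for the number of zeros within distance $1$ of $t$ is only $O(\l t)$, which would yield merely $O(\l T)$ in place of $O(1)$; the required gain must come entirely from the precise exponent $3|\b-1/2|$ in \eqref{sxt def} (and, if necessary, from the negative contribution of the many nearby zeros \emph{on} the critical line, which was discarded above), so that the geometric decay of $|t-\g|^{-2}$ outweighs the growth of the zero-count in every dyadic block. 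Executing this bookkeeping carefully --- this is exactly the estimate carried out in Hough's work --- gives $\sum_{\r}(\b-\tfrac12)^2(t-\g)^{-2}=O(1)$, and with it the lemma.
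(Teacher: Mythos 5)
Your first two paragraphs are fine (and standard): integrating $\Re\frac{\zeta'}{\zeta}$ along the horizontal segment and isolating the zeros with $\beta-\frac12>\frac12(\sigma_{X,t}-\frac12)$ that lie outside their own windows is a correct reduction. The gap is exactly at the step you flag as the main obstacle, and it is not a bookkeeping issue: the estimate $\sum_{\rho}(\beta-\frac12)^2(t-\gamma)^{-2}=O(1)$ over the positively contributing zeros cannot be extracted from the coupling $|t-\gamma|>X^{3(\beta-1/2)}/\log X$ together with Lemma \ref{Tsang zero}. Selberg's density bound is an average over all ordinates $0<\gamma\le T$ and gives no control whatsoever over the zeros near one \emph{fixed} $t$, whereas the lemma is a pointwise statement for every $t>10$. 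Unconditionally, nothing you retain excludes a configuration of $c\log t$ zeros with $\beta-\frac12$ slightly larger than $2/\log X$ located at distance comparable to $e^{6}/\log X$ from $t$, each just outside its own window; each such zero contributes an absolute constant (of size about $e^{-13}$) to your sum, which is then of order $\log t$, not $O(1)$. So the quantity you propose to bound is genuinely of size up to $\log t$ given only the information you keep; the inequality survives only because the terms you discarded in paragraph two (the zeros on and to the left of the critical line) supply compensating negative contributions, and your claim that the remaining estimate "is exactly the estimate carried out in Hough's work" is not accurate.

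The missing idea is the functional-equation symmetry: pair each zero $\beta+i\gamma$ with its reflection $1-\beta+i\gamma$, i.e.\ run the argument for the completed function $\xi(s)$, for which $\Re\frac{\xi'}{\xi}(\frac12+x+it)$ is exactly the paired sum $\sum\big(\frac{x-b}{(x-b)^2+d^2}+\frac{x+b}{(x+b)^2+d^2}\big)$ with $b=\beta-\frac12\ge0$, $d=t-\gamma$ (this is also why the paper's proof consists of citing Hough's Proposition 4.1 and then "applying Stirling's formula to the gamma function term"). Integrating a pair over $0\le x\le\delta:=\sigma_{X,t}-\frac12$ gives $\frac12\log\frac{((\delta-b)^2+d^2)((\delta+b)^2+d^2)}{(b^2+d^2)^2}$, and this is nonnegative in every case: if $d\ge b$ the integrand $2x(x^2-b^2+d^2)$ is pointwise nonnegative; if $d<b$ then, since $b\le X^{3b}/\log X$ always holds, the zero lies inside its own window in \eqref{sxt def}, so by definition $\delta\ge 2b$ and both logarithmic factors are $\ge1$. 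Hence $\log|\xi(\frac12+it)|\le\log|\xi(\sigma_{X,t}+it)|$ term by term, and unwinding the gamma and $s(s-1)$ factors by Stirling yields the lemma with $O(1)$ error and with no zero-density input at all. In short: keep the reflected zeros rather than discarding them, and use the defining property of $\sigma_{X,t}$ to guarantee $\delta\ge2b$ for every zero closer to $t$ than $b$; that replaces the unprovable $O(1)$ bound in your third paragraph.
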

\begin{proof}
See Proposition 4.1 of
\cite{Hou:2011}. We have applied Stirling's formula to the gamma function term.
\end{proof}
Hough's inequality is similar to one of Soundararajan \cite{So:2009} (see the main proposition). Crucially, the bound here does not depend upon the truth of the Riemann hypothesis.

\section{Controlling the Error Term 
in the Approximate Formula}

In the error term in the approximate formula for $\l |\z(\tfrac12+it)|$, the term 
$\l^+ (1/(\eta_t \l X))$ will be quite large when $t$ is near an ordinate of a 
zero of $\z(s)$. Consequently, the error in approximating $\l |\z(\tfrac12+ig)|$
may be quite large for any given $g$. However, in this section we will
show that Hypothesis~S implies that this can only happen for at most $o(T \l T)$
of the shifted Gram points  with $T<g \leq 2T$. We then show that for the remaining
 $T < g \leq 2T$,
the error term in the approximate formula for $\l |\z(\tfrac12+ig)|$ is
relatively small on average.

We first introduce some notation. We denote by
$g^*$ any $g$ satisfying $\eta_{g} \geq 1 /(\l (|g|+2) \l \l (|g|+3))$.
We denote all other $g$ by $g_*$.

\begin{lem} \label{zeros lem}
Assume Hypothesis~S. Then as $T \rightarrow \infty$
$$
\sum_{T < g_{*} \leq 2T} 1 =o(T \l T).
$$
\end{lem}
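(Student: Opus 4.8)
The plan is to bound the number of shifted Gram points $g_* \in (T,2T]$ that are anomalously close to an ordinate of a zero of $\zeta(s)$, by comparing the regular spacing of shifted Gram points (Lemma~\ref{simple lem}) with the spacing of zeros controlled by Hypothesis~S. Recall that $g_*$ denotes a shifted Gram point with $\eta_{g_*} < 1/(\l(|g_*|+2)\l\l(|g_*|+3))$; so for $T<g_*\le 2T$ there is an ordinate $\gamma$ of a zero of $\zeta(s)$ with $|g_*-\gamma| < 1/(\l T \l\l T)$ (up to the obvious adjustment of constants). The strategy is: for each such $g_*$ pick a witnessing zero ordinate $\gamma(g_*)$, note $\gamma(g_*) \in (T - o(1), 2T + o(1)]$, and then bound how many of the $T<g_*\le 2T$ can be assigned to ordinates lying in a short window.

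First I would use the near-counting-function estimate for zeros: by Hypothesis~S with $n=0$, for every $\delta>0$ the number of pairs of ordinates $T<\gamma,\gamma'\le 2T$ with $|(\gamma-\gamma')\l T/(2\pi)| < \delta$ is $o(T\l T)$ as $T\to\infty$ (after letting $\epsilon\to 0$). In particular, writing $M(\delta)$ for the set of ordinates $\gamma\in(T,2T]$ that have some \emph{other} ordinate $\gamma'$ within $2\pi\delta/\l T$, a pigeonhole/counting argument gives $|M(\delta)| = o(T\l T)$; zeros not in $M(\delta)$ are "$\delta$-isolated." Next, because shifted Gram points are spaced $\sim 2\pi/\l T$ apart (Lemma~\ref{simple lem}), at most $O(1)$ shifted Gram points can lie within distance $2\pi\delta/\l T$ of any fixed point once $\delta$ is small; hence at most $O(1)$ of the $g_*$ in $(T,2T]$ can be assigned to any single $\delta$-isolated ordinate. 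Finally, each $g_*$ forces its witnessing ordinate $\gamma(g_*)$ to be within $1/(\l T\l\l T) \ll 2\pi\delta/\l T$ of $g_*$; if that ordinate is $\delta$-isolated it accounts for $O(1)$ values of $g_*$, and if it is not $\delta$-isolated it lies in $M(\delta)$. Combining:
\[
\sum_{T<g_*\le 2T} 1 \;\ll\; |M(\delta)| \;+\; \#\{\delta\text{-isolated ordinates in }(T,2T]\}\cdot O(1).
\]
The second term looks like it could be as large as $O(T\l T)$, which is the issue to confront below.

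The main obstacle is precisely that last point: the set of $\delta$-isolated ordinates can itself have size $\asymp T\l T$, so the crude bound above is not good enough. The fix is to observe that a shifted Gram point $g_*$ that is within $1/(\l T\l\l T)$ of an ordinate $\gamma$ is within a window that shrinks \emph{faster} than $1/\l T$ — specifically by a factor $1/\l\l T\to 0$. So I would instead apply Hypothesis~S more carefully: fix $\epsilon>0$ and note that a $g_*$ near $\gamma$ is, by the regular spacing of Gram points, "almost" at a predictable location $g_n$, and $\gamma$ must lie within $\epsilon/\l T$ of $g_n$ for all large $T$ (since $1/(\l T \l \l T) < \epsilon/\l T$). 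Thus every witnessing ordinate lies in the union of the $\asymp T\l T$ intervals of length $2\epsilon/\l T$ centered at the Gram points; and now the key step is to show that the number of ordinates $\gamma\in(T,2T]$ lying in $\bigcup_n (g_n-\epsilon/\l T, g_n+\epsilon/\l T)$ is $o(T\l T)$ as $\epsilon\to 0$. This is where Hypothesis~S enters in the form needed: for each $g_*$, either the Gram point it sits near captures a unique zero ordinate (and then the spacing argument plus the definition of $N(T)$ versus $N_g(T)$, which agree to leading order, bounds these by counting) or multiple ordinates cluster near a single Gram point and Hypothesis~S (with small $\epsilon$, taking differences of such clustered ordinates, which are then $<\epsilon\cdot 2\pi/\l T$ apart) forces $o(T\l T)$ of them. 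Carrying this through — choosing the parameters $\epsilon,\delta$ in the right order, with $\epsilon\to 0$ after $T\to\infty$, and checking that the leading-order agreement $N_g(2T)-N_g(T)\sim (T/2\pi)\l T \sim N(2T)-N(T)$ makes the "one ordinate per Gram point" regime account for only a $(1+o(1))$-fraction rather than extra mass — is the technical heart; the rest is bookkeeping with the van Mangoldt-type count \eqref{count apts} and Lemma~\ref{simple lem}.
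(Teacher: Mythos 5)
Your argument never actually uses what Hypothesis~S provides, and this is a genuine gap. Hypothesis~S is stated only for $n\in\N$, i.e.\ for differences $(\gamma-\gamma')\log T/(2\pi)$ near \emph{positive} integers; it says nothing about differences near $0$. (Indeed a version with $n=0$ could not be assumed as stated: the diagonal pairs $\gamma=\gamma'$ alone contribute $\asymp T\log T$.) Your first step ("by Hypothesis~S with $n=0$\dots") and your final fix ("multiple ordinates cluster near a single Gram point and Hypothesis~S \dots forces $o(T\log T)$ of them") both invoke exactly this unavailable $n=0$ case. The other branch of your fix is also not sound: in the regime where each bad shifted Gram point has a \emph{unique} nearby ordinate, the fact that $N(2T)-N(T)$ and $N_g(2T)-N_g(T)$ agree to leading order gives no control whatsoever on how many Gram points have an ordinate within $1/(\log T\log\log T)$ of them; a priori a positive proportion of Gram points could each host exactly one well-isolated zero (this is precisely the Landau--Siegel/Alternative Hypothesis scenario the paper warns about in the introduction), and neither clustering nor counting rules it out. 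So the "technical heart" you defer is in fact the whole problem, and the tools you allow yourself cannot close it.

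The paper's proof handles exactly this scenario by forcing the \emph{positive}-integer cases of Hypothesis~S into play. It argues by contradiction: if along some sequence $T_n$ the points $g_*$ have density at least $1/M$ among the shifted Gram points in $(T_n,2T_n]$, attach to each $g_*=g_\ell$ the block $A_{g_*}=\{g_\ell,\dots,g_{\ell+M}\}$ of $M+1$ consecutive shifted Gram points. An inclusion--exclusion count shows that at least $N_g(T_n,2T_n)/M^2+O(1)$ \emph{pairs} of distinct bad points $g_*'<g_*$ have overlapping blocks, hence differ by at most $M$ Gram spacings; by Lemma~\ref{simple lem} such a pair satisfies $|(g_*-g_*')\log T_n/(2\pi)-m|=O(m/\log T_n)$ for some $m\in\{1,\dots,M\}$. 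Since each $g_*$ lies within $1/(\log T_n\log\log T_n)$ of an ordinate, the corresponding pairs of ordinates $\gamma,\gamma'$ have $|(\gamma-\gamma')\log T_n/(2\pi)-m|<\epsilon$ for any fixed $\epsilon>0$ once $n$ is large, producing $\gg T_n\log T_n/M^2$ pairs attached to the finitely many positive integers $m\le M$ — contradicting Hypothesis~S. The essential idea you are missing is this passage from single bad Gram points to pairs of distinct bad Gram points a bounded number of spacings apart, which converts the hypothesis at $n=1,\dots,M$ into the needed bound; without it, Hypothesis~S cannot be brought to bear on the lemma at all.
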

\begin{proof}
The proof is by contradiction. Suppose for some
integer $M\geq 2$ there is a sequence $\{ T_n \}$
such that $T_n \rightarrow \infty$ as $n \rightarrow \infty$,
and that for each $n$
\be \label{pos dens}
\frac{1}{N_g(T_n, 2T_n)}\sum_{T_n < g_{*} \leq 2T_n } 1 \geq \frac{1}{M}.
\ee
Let
$$
A_{g_{*}}=A_{g_{\ell}}=\{g_{\ell}, \ldots, g_{{\ell}+M}\}.
$$
By Lemma \ref{simple lem} if $T_n < g_{*}' < g_{*} \leq 2T_n$ and $A_{g_{*}} \cap A_{g_{*}'} \neq \emptyset$
then there is an $m \in \N$ with $1 \leq m \leq M$ such that
$$
\Big|(g_{*}-g_{*}')\frac{\l T_n}{2\pi}-m\Big|\leq \frac{Cm}{\l T_n}=\varepsilon_1,
$$
where $C$ is an absolute constant. 
Thus, for all sufficiently large $n$,
\be \label{key ineq}
\sum_{m=1}^{M} \sum_{T_n < g_{*}, g_{*}' \leq 2T_n } \mathbf 1_{[m-\varepsilon_1, m+\varepsilon_1] }\Big((g_{*}-g_{*}') 
\frac{\l T_n}{2\pi} \Big)
\geq \sum_{\substack{T_n < g_{*}, g_{*}' \leq 2T_n\\ g_{*}'< g_{*}, \, A_{g_{*}} \cap A_{g_{*}'} \neq \emptyset}} 1.
\ee

To obtain a lower bound for the sum on the right-hand side, we begin
by noting that by inclusion-exclusion,
$$
\Big| \bigcup_{T_n < g_{*} \leq 2T_n} A_{g_{*}} \Big|-\sum_{T_n < g_{*} \leq 2T_n} |A_{g_{*}}|
+\sum_{\substack{T_n < g_{*}, g_{*}' \leq 2T_n\\ g_{*}'< g_{*}}} |A_{g_{*}} \cap A_{g_{*}'}| \, \geq 0.
$$
We have $|A_{g_{*}}|=M+1$, $|A_{g_{*}} \cap A_{g_{*}'}| \leq M$ for $g_{*} \neq g_{*}'$, 
and $| \cup_{T_n < g_{*} \leq 2T_n} A_{g_{*}} | \leq N_g(T_n, 2T_n)+M$.
Thus, 
$$
M \sum_{\substack{T_n < g_{*}, g_{*}' \leq 2T_n\\ g_{*}'< g_{*}, \, A_{g_{*}} \cap A_{g_{*}'} \neq \emptyset}} 1 
\geq \sum_{\substack{T_n < g_{*}, g_{*}' \leq 2T_n\\ g_{*}'< g_{*}}} |A_{g_{*}} \cap A_{g_{*}'}|
\geq(M+1)\sum_{T_n < g_{*} \leq 2T_n} 1 -N_g(T_n, 2T_n)-M.
$$
From this and \eqref{pos dens} we obtain
$$
 \sum_{\substack{T_n < g_{*},g_{*}' \leq 2T_n\\ g_{*}'< g_{*}, \, A_{g_{*}} \cap A_{g_{*}'} \neq \emptyset}} 1 
\geq \frac{1}{M^2}N_g(T_n, 2T_n)-1.
$$
Combining this with \eqref{key ineq} we now see that 
\be \label{shift bd 2}
\sum_{m=1}^{M} \sum_{T_n < g_{*},g_{*}' \leq 2T_n } 
\mathbf 1_{[m-\varepsilon_1, m+\varepsilon_1] }\Big((g_{*}-g_{*}') \frac{\l T_n}{2\pi} \Big)
\geq \frac{1}{M^2}N_g(T_n, 2T_n)-1.
\ee

By the definition of $g_*$ we know that there is an ordinate of a zero of $\z(s)$
$\g$ so that
$|g_*-\g| \leq 1/ (\l T_n \l \l T_n)=\varepsilon_2$ for $g_* > T_n$. Hence,
for $T_n<g_*, g_*' \leq 2T_n$
$$
 \bigg|(g_*-g_*') \frac{\l T_n}{2\pi}-(\g-\g')\frac{\l T_n}{2\pi} \bigg| \leq \frac{1}{ \pi \l \l T_n}.
$$
Now let $\varepsilon_3=\varepsilon_1+1/ (\pi \l \l T_n)$. We have
\be \label{gg bd 1}
\begin{split}
\sum_{T_n<g_*, g_*' \leq 2T_n} 
\mathbf 1_{[m-\varepsilon_1, m+\varepsilon_1]}
\Big((g_*-g_*') \frac{\l T_n}{2\pi} \Big) 
 \leq 
\sum_{T_n-\varepsilon_2< \g, \g' \leq 2T_n+\varepsilon_2} 
\mathbf 1_{[m-\varepsilon_3, m+\varepsilon_3]}\Big((\g-\g') \frac{\l T_n}{2\pi} \Big).
\end{split}
\ee
Note that
\be \label{gg bd 2}
\begin{split}
\sum_{T_n-\varepsilon_2< \g, \g' \leq 2T_n+\varepsilon_2} 
\mathbf 1_{[m-\varepsilon_3, m+\varepsilon_3]}\Big((\g-\g') \frac{\l T_n}{2\pi} \Big)  
=& \sum_{T_n< \g, \g' \leq 2T_n} 
\mathbf 1_{[m-\varepsilon_3, m+\varepsilon_3]}\Big((\g-\g') \frac{\l T_n}{2\pi} \Big)\\
&+ O(\l^2 T_n),
\end{split}
\ee
since $N(t+1)-N(t)=O(\l (|t|+2))$ (see \cite{T:1986} Chapter IX).
Now let $\e>0$. Then for all $n$ sufficiently large, we have
\be \label{gg bd 3}
\begin{split}
\sum_{T_n< \g, \g' \leq 2T_n} 
\mathbf 1_{[m-\varepsilon_3, m+\varepsilon_3]}\Big((\g-\g') \frac{\l T_n}{2\pi} \Big) 
\leq 
\sum_{T_n< \g, \g' \leq 2T_n} 
\mathbf 1_{[m-\e, m+\e]}\Big((\g-\g') \frac{\l T_n}{2\pi} \Big).
\end{split}
\ee

By \eqref{gg bd 1}, \eqref{gg bd 2}, and \eqref{gg bd 3} we see that
\bes
\begin{split}
\sum_{T_n<g_*, g_*' \leq 2T_n} 
\mathbf 1_{[m-\varepsilon_1, m+\varepsilon_1]}
\Big((g_*-g_*') \frac{\l T_n}{2\pi} \Big) 
\leq 
\sum_{T_n< \g, \g' \leq 2T_n} 
\mathbf 1_{[m-\e, m+\e]}\Big((\g-\g') \frac{\l T_n}{2\pi} \Big)
+ O(\l^2 T_n).
\end{split}
\ees
Applying this in \eqref{shift bd 2} we see that
$$
\sum_{m=1}^{M} \sum_{T_n< \g, \g' \leq 2T_n} 
\mathbf 1_{[m-\e, m+\e]}\Big((\g-\g') \frac{\l T_n}{2\pi} \Big)
\geq \frac{1}{M^2}N_g(T_n, 2T_n)(1+o(1)).
$$
By Hypothesis~S it follows from this that
\bes
\begin{split}
& \frac{1}{M^2} \leq \lim_{\e \rightarrow 0^+} \bigg( \limsup_{n \rightarrow \infty}
 \sum_{m=1}^{M}\frac{1}{N_g(T_n, 2T_n)}  \sum_{T_n< \g, \g' \leq 2T_n} 
\mathbf 1_{[m-\e, m+\e]}\Big((\g-\g') \frac{\l T_n}{2\pi} \Big)\bigg) \\
&=
 \sum_{m=1}^{M} \lim_{\e \rightarrow 0^+} \bigg( \limsup_{n \rightarrow \infty}
  \frac{ |\{T_n< \g, \g' \leq 2T_n :  
| \frac{(\g-\g')\l T_n}{2\pi}-m |<\epsilon \}|}{N_g(T_n, 2T_n)}\bigg)=0,
\end{split}
\ees
so we have reached a contradiction.
\end{proof}
The following lemma will allow us to show 
that the error term in the approximate formula
for $\l |\z(1/2+ig^*)|$ with $T < g^* \leq 2T$ is
relatively small on average. 
\begin{lem} \label{lem error bd}
 Let $X = T^{1/100}$ and $F(t; X)$ be as defined 
 in \eqref{definition of F}. Then
$$
\sum_{T < \gad \leq 2T} F(\gad; X) \ll   T \l \l \l T
$$
and
$$
\sum_{T < \gad \leq 2T} X^{2\s_{X, \gad}-1}F(\gad; X)^2 \ll  T \frac{(\l \l \l T)^2}{\l T}.
$$
\end{lem}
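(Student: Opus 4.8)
The plan is to reduce both bounds to the zero-density estimate of Lemma~\ref{zero density bd}. Throughout, write $d = d(\gad) = \s_{X,\gad} - \tfrac12$, so that $d \ge 2/\l X > 0$ by \eqref{sxt def}, and recall $\l X = \tfrac{1}{100}\l T \asymp \l T$. The first step, and the only one that is not routine, is to control the term $\l^+ \tfrac{1}{\eta_t \l X}$ appearing in the definition \eqref{definition of F} of $F$. By the very definition of $\gad$, for $T < \gad \le 2T$ one has $\eta_{\gad} \ge 1/(\l(|\gad|+2)\,\l \l (|\gad|+3)) \gg 1/(\l T\, \l \l T)$, and therefore
$$
\l^+ \frac{1}{\eta_{\gad}\l X} \ll \l \l \l T .
$$
(For a general shifted Gram point this quantity can be as large as $\asymp \l T$, so the restriction to $\gad$ is essential; this is the role played by Lemma~\ref{zeros lem}.) Combining this with the trivial bound $X^{-d/2}\le 1$ in \eqref{definition of F} gives the pointwise estimate
$$
F(\gad; X) \ll \Big(d + \tfrac{1}{\l X}\Big)\Big(d\,\l X + \l \l \l T\Big) \ll d^2 \l X + d\,\l \l \l T + d + \frac{\l \l \l T}{\l X},
$$
which serves as the starting point for both inequalities.

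For the first bound I would sum this estimate over $T < \gad \le 2T$; since the $\gad$ form a subset of the shifted Gram points in $(T,2T]$ and every summand is nonnegative, it suffices to sum over all such points. Applying Lemma~\ref{zero density bd} with a fixed $\xi \ge 3$ (admissible for large $T$, since $\xi \le T^{1/25}$) and $k = 1$, then $k = 2$, and using $\xi^{d} \ge 1$, yields $\sum_{T < g \le 2T} d \ll T$ and $\sum_{T < g \le 2T} d^2 \ll T/\l T$; the lone term $d$ is handled in the same way, while $\sum_{T < g \le 2T} 1 \ll T\l T$ disposes of the last term. Since $\l X \asymp \l T$, the four contributions are $\ll T$, $\ll T\l \l \l T$, $\ll T$, and $\ll T\l \l \l T$, whence $\sum_{T<\gad\le 2T}F(\gad;X)\ll T\l \l \l T$, as required.

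For the second bound I would square the pointwise estimate for $F(\gad; X)$, using $(a+b)^2 \ll a^2 + b^2$ twice, and multiply by $X^{2\s_{X,\gad}-1} = X^{2d}$, obtaining
$$
X^{2\s_{X,\gad}-1} F(\gad; X)^2 \ll X^{2d} d^4 \l^2 X + X^{2d} d^2 (\l \l \l T)^2 + X^{2d} d^2 + X^{2d}\,\frac{(\l \l \l T)^2}{\l^2 X}.
$$
Here I would apply Lemma~\ref{zero density bd} with $\xi = X^2$, which is legitimate precisely because $X^2 = T^{1/50} \le T^{1/25}$: with $k = 4,2,2,0$ respectively this gives $\sum_{T<g\le 2T} d^4 X^{2d} \ll T/\l^3 T$, $\sum_{T<g\le 2T} d^2 X^{2d} \ll T/\l T$, and $\sum_{T<g\le 2T} X^{2d} \ll T\l T$. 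Summing over $T < \gad \le 2T$ and again using $\l X \asymp \l T$, the four terms contribute $\ll T/\l T$, $\ll T(\l \l \l T)^2/\l T$, $\ll T/\l T$, and $\ll T(\l \l \l T)^2/\l T$, giving the claimed bound $\ll T(\l \l \l T)^2/\l T$.

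The main obstacle is really just careful bookkeeping: one must keep track of which power $X^{\b d}$ of $X$ survives after squaring $F$ and multiplying by $X^{2\s_{X,\gad}-1}$, since the largest such power, $X^{2d}$, is exactly what forces the choice $\xi = X^2$ in Lemma~\ref{zero density bd} and thereby dictates that $X$ be a sufficiently small power of $T$ (here $X = T^{1/100}$, so $X^2 = T^{1/50}\le T^{1/25}$). Once the exponents are tallied correctly, each of the resulting sums is an immediate instance of Lemma~\ref{zero density bd}.
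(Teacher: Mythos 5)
Your proposal is correct and follows essentially the same route as the paper: bound $F(\gad;X)$ pointwise using $\eta_{\gad}\gg 1/(\l T\,\l\l T)$ (so the $\l^+$ term is $\ll \l\l\l T$), then sum via Lemma~\ref{zero density bd} with a bounded $\xi$ for the first assertion and $\xi=X^2=T^{1/50}\le T^{1/25}$ for the second. The only cosmetic difference is that the paper absorbs the $X^{(1/2-\s_{X,t})/2}/\l X$ term into $\s_{X,t}-\tfrac12$ (since $\s_{X,t}-\tfrac12\ge 4/\l X$), giving the cleaner pointwise bound $F(\gad;X)\ll(\s_{X,\gad}-\tfrac12)^2\l X+(\s_{X,\gad}-\tfrac12)\l\l\l T$, whereas you carry the harmless extra terms $d+\l\l\l T/\l X$ through the same estimates.
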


\begin{proof}
Recall that
\bes
\begin{split}
F(t; X)=&\Big( \frac{X^{(1/2-\s_{X,t})/2}}{\l X}+(\s_{X,t}-\tfrac12) \Big)
 \Big((\s_{X,t}-\tfrac12)\l X+\l^+ \frac{1}{\eta_t \l X}\Big).
\end{split}
\ees
By definition $\eta_{g^*} \gg 1/(\l T \l \l T)$ for $T< g^* \leq 2T$ and $\s_{X,t} \geq 1/2+4/\l X$. So that
$$
F(g^*; X) \ll  (\s_{X,\gad}-\tfrac12)^2\l X+(\s_{X,\gad}-\tfrac12)\l \l \l T .
$$
Both assertions of the lemma now follow from Lemma \ref{zero density bd}.
\end{proof}

\section{The Proofs of Theorem \ref{upper bd} and Theorem \ref{thm 2}}
Let
\[
 N_{\ga}^* (T, 2T)=\sum_{\substack{T < \ga \leq 2T \\ g\neq \g}} 1
\]
and
$$
\mathcal F_T(v)=\frac{|\{ T < g \leq 2T,  g\neq \g: \l
|\z(\tfrac12+ig)|\Psi^{-1/2} \leq v\}|}{N_{\ga}^*(T, 2T)}.
$$
Note that $\mathcal F_T(v)$ is a distribution function. It also follows
that the characteristic function corresponding to $\mathcal F_T(v)$ is given by
$$
\phi_T(u)
:=\int_{-\infty}^{\infty} e^{iu v} \, d \mathcal F_T(v)=
\frac{1}{N_{\ga}^* (T, 2T)}
\sum_{\substack{T < \ga \leq 2T \\ g\neq \g}} \exp\Big(i u \l |\z(\tfrac12+i\ga)|\Psi^{-1/2}\Big).
$$
Similarly, the distribution function for $\log 
|\zeta(\sigma_{X,g}+ig)| \Psi^{-1/2}$ at the points
$T < g \le 2T$ has the corresponding characteristic function
\[
\varphi_T(u)=\frac{1}{N_{\ga} (T, 2T)}
\sum_{\substack{T < g \leq 2T}} \exp\Big(i u \l 
|\z(\sigma_{X,g}+ig)|\Psi^{-1/2}\Big).
\]
(By the definition of $\s_{X,t}$ we know that
$\s_{X,g}+ig$ is not a zero of $\zeta(s)$.)

Given
distribution functions $\mathcal F_1, \mathcal F_2, \ldots$ and 
the corresponding characteristic functions $ \phi_1, \phi_2, \ldots$ 
L\'evy's
continuity theorem states, in particular, that 
if $\phi_n$ converges pointwise on $\R$ to 
a function $\phi$ that is continuous at $0$ as $n \rightarrow \infty$
then $\mathcal F_n$ 
converges weakly to a distribution function $\mathcal F$ as $n \rightarrow \infty$.
Moreover, the
characteristic function of $\mathcal F$ is $\phi$.
See 
Billingsley \cite{Bi:1986} Theorem 26.3 or Theorem 3 from Chapter III.2
of Tenenbaum \cite{Ten:1995}. Also, note that an analogue of L\'evy's continuity theorem
holds when one replaces $\{ \mathcal F_n\}$ by $\{\mathcal F_T : T>T_0\}$ and $\{ \phi_n \}$ by
$\{ \phi_T : T>T_0\}$ for constant $T_0$.

We shall prove
\begin{prop} \label{ch.f. sxt}
Let $X=T^{1/100}$.
For $|u| \leq \Psi^{1/2}/100$, we have
\bes
\begin{split}
\frac{1}{N_{\ga} (T, 2T)}\sum_{\substack{T < \ga \leq 2T }} 
\exp\Big(i u \l |\z(\sigma_{X,g}+i\ga)|\Psi^{-1/2} \Big)=&\,e^{-u^2/2}\(1+O\(\frac{u^2 \l \l \l T}{\Psi}\)\)\\
&+O\bigg(  |u|\bigg(\frac{ \log \log \log T}{\Psi}\bigg)^{1/2}\bigg) +O\Big(\frac{1}{\l T}\Big).
\end{split}
\ees
\end{prop}

 Moreover, under the assumption of Hypothesis~S we shall prove
\begin{prop} \label{ch.f.} 
Assume Hypothesis~S. Then for $|u| \leq \Psi^{1/2}/100$, we have
\bes
\begin{split}
\frac{1}{N_{\ga}^* (T, 2T)}\sum_{\substack{T < \ga \leq 2T \\ g\neq \g}} 
\exp\Big(i u \l |\z(\tfrac12+i\ga)|\Psi^{-1/2} \Big)=&\,e^{-u^2/2}\(1+O\(\frac{u^2 \l \l \l T}{\Psi}\)\)\\
&+o(1)+O\( |u| \frac{\l \l \l T}{\Psi^{1/2}} \).
\end{split}
\ees
\end{prop}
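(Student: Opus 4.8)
The plan is to transfer from $\l|\z(\s_{X,g}+ig)|$, whose characteristic function is supplied by Proposition~\ref{ch.f. sxt}, to $\l|\z(\tfrac12+ig)|$ by means of the two approximate formulas of Lemma~\ref{approx form}; the whole difficulty lies in controlling, on average over the shifted Gram points, the error this transfer introduces. Throughout, $X=T^{1/100}$, and (as in Section~5) $g^{*}$ denotes a \emph{good} Gram point, $\eta_{g^{*}}\gg 1/(\l T\l\l T)$, while $g_{*}$ denotes the remaining ones. First I would discard the bad Gram points: by Lemma~\ref{zeros lem} --- the only place Hypothesis~S is used --- one has $\sum_{T<g_{*}\le 2T}1=o(T\l T)$, and since $g=\g$ forces $\eta_{g}=0$ (so $g$ is a $g_{*}$), it follows that $N_{g}^{*}(T,2T)=N_{g}(T,2T)(1+o(1))$ and, the summand having modulus one,
\[
\frac{1}{N_{g}^{*}(T,2T)}\sum_{\substack{T<g\le 2T\\ g\neq\g}}\exp\!\Big(iu\,\l|\z(\tfrac12+ig)|\,\Psi^{-1/2}\Big)=\frac{1}{N_{g}(T,2T)}\sum_{T<g^{*}\le 2T}\exp\!\Big(iu\,\l|\z(\tfrac12+ig^{*})|\,\Psi^{-1/2}\Big)+o(1),
\]
uniformly for $|u|\le\Psi^{1/2}/100$; the same remark lets me re-insert the bad points harmlessly on the $\s_{X,g}$ side at the end.

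Next I would subtract the two formulas of Lemma~\ref{approx form}: their common main term $\sum_{p\le X^{3}}\cos(g^{*}\l p)p^{-1/2}$ cancels, so $\l|\z(\tfrac12+ig^{*})|-\l|\z(\s_{X,g^{*}}+ig^{*})|$ is, in absolute value, $\ll\mathcal E(g^{*})$, where $\mathcal E(g^{*})$ gathers the two error terms: the $F(g^{*};X)\l T$-type pieces, the Dirichlet polynomials $E_{2}(g^{*};X),E_{3}(g^{*};X)$, and the integrals $F(g^{*};X)X^{\s_{X,g^{*}}-1/2}\int_{1/2}^{\infty}X^{1/2-u}|D_{u}(g^{*})|\,du$ together with its analogue carrying the prefactor $X^{(\s_{X,g^{*}}-1/2)/2}/\l X+(\s_{X,g^{*}}-\tfrac12)X^{\s_{X,g^{*}}-1/2}$ from the second formula, where $D_{u}(t)=\sum_{p\le X^{3}}\tfrac{\l p\,\l(Xp)}{p^{u+it}}w_{X}(p)$. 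Using $|\exp(iua)-\exp(iub)|\le|u|\,|a-b|$, the transfer costs $O\!\big(\tfrac{|u|}{\Psi^{1/2}N_{g}^{*}(T,2T)}\sum_{T<g^{*}\le 2T}\mathcal E(g^{*})\big)$, so everything reduces to
\[
\sum_{T<g^{*}\le 2T}\mathcal E(g^{*})\ll N_{g}(T,2T)\,\l\l\l T .
\]

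I would prove this term by term. The $F$-terms are handled by Lemma~\ref{lem error bd} (giving $\sum_{g^{*}}F(g^{*};X)\l T\ll T\l T\,\l\l\l T$) and the $\s_{X,g}$-dependent terms by Lemma~\ref{zero density bd} with a fixed $\xi$; for $E_{2},E_{3}$ one applies Cauchy--Schwarz and the mean-value estimates \eqref{MV 1}, \eqref{MV 2} --- whose coefficient hypotheses hold since $1-w_{X}(p)$ is supported on $X<p\le X^{3}$ where $\l p\asymp\l X$, and $w_{X}(p^{2})\ll1$ --- obtaining $\sum_{T<g\le 2T}|E_{2}(g;X)|^{2},\ \sum_{T<g\le 2T}|E_{3}(g;X)|^{2}\ll T\l T$, whence $\sum_{g^{*}}E_{2}(g^{*};X),\ \sum_{g^{*}}E_{3}(g^{*};X)\ll T\l T$. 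The integral terms are the crux: by Cauchy--Schwarz over $g^{*}$ they are $\ll\big(\sum_{g^{*}}X^{2\s_{X,g^{*}}-1}F(g^{*};X)^{2}\big)^{1/2}\big(\sum_{g^{*}}\big(\int_{1/2}^{\infty}X^{1/2-u}|D_{u}(g^{*})|\,du\big)^{2}\big)^{1/2}$; the first factor is $\ll\big(T(\l\l\l T)^{2}/\l T\big)^{1/2}$ by Lemma~\ref{lem error bd} (and $\ll(T/\l T)^{1/2}$ for the second-formula analogue, by Lemma~\ref{zero density bd}), while for the second factor Cauchy--Schwarz in $u$ gives $\big(\int X^{1/2-u}|D_{u}|\,du\big)^{2}\le(\l X)^{-1}\int X^{1/2-u}|D_{u}|^{2}\,du$, which after interchanging with the $g$-sum and invoking Lemma~\ref{oscillation} --- to prove $\sum_{T<g\le 2T}|D_{u}(g)|^{2}\ll N_{g}(T,2T)\l^{4}X$ uniformly in $u\ge\tfrac12$ (the diagonal dominates because $\sum_{p\le X^{3}}\tfrac{(\l p)^{2}(\l Xp)^{2}}{p}\ll\l^{4}X$, the off-diagonal being $\ll T^{1-\d}$ since $|\l(p/q)|\ge X^{-3}$) --- yields $\ll\big(N_{g}(T,2T)\l^{2}X\big)^{1/2}$; as $\l X=\tfrac1{100}\l T$, the product is $\ll N_{g}(T,2T)\l\l\l T$, as required.

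Finally, combining the displays above (re-inserting the $o(T\l T)$ bad Gram points, which is harmless, and using $N_{g}^{*}(T,2T)\sim N_{g}(T,2T)$) and invoking Proposition~\ref{ch.f. sxt} for $\tfrac{1}{N_{g}(T,2T)}\sum_{T<g\le 2T}\exp(iu\,\l|\z(\s_{X,g}+ig)|\Psi^{-1/2})$ gives, for $|u|\le\Psi^{1/2}/100$,
\[
\frac{1}{N_{g}^{*}(T,2T)}\sum_{\substack{T<g\le 2T\\ g\neq\g}}\exp\!\Big(iu\,\l|\z(\tfrac12+ig)|\,\Psi^{-1/2}\Big)=e^{-u^{2}/2}\Big(1+O\Big(\tfrac{u^{2}\l\l\l T}{\Psi}\Big)\Big)+o(1)+O\Big(|u|\tfrac{\l\l\l T}{\Psi^{1/2}}\Big),
\]
the lower-order errors from Proposition~\ref{ch.f. sxt} being absorbed since $(\l\l\l T/\Psi)^{1/2}\le\l\l\l T/\Psi^{1/2}$ and $1/\l T=o(1)$. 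I expect the main obstacle to be the bound $\sum_{g^{*}}\mathcal E(g^{*})\ll N_{g}(T,2T)\l\l\l T$ --- in particular the integral term, where the choices $X=T^{1/100}$, the powers of $\l X$, and the zero-density inputs of Lemmas~\ref{lem error bd} and \ref{zero density bd} must balance exactly; conceptually, the argument rests entirely on Lemma~\ref{zeros lem}, hence on Hypothesis~S, since the approximate formula for $\l|\z(\tfrac12+ig)|$ is useless at the Gram points near zeros.
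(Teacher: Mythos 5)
Your proposal is correct and is essentially the paper's own argument: the paper proves Proposition \ref{ch.f.} by combining Lemma \ref{zeros lem} (the sole use of Hypothesis S, exactly as in your reduction to the points $g^*$), the first assertion of Lemma \ref{ch. f.} (whose proof is precisely your error analysis — Lemma \ref{lem error bd}, the mean-value bounds \eqref{MV 1} and \eqref{MV 2}, and the double Cauchy--Schwarz treatment of the term $F(g^*;X)X^{\sigma_{X,g^*}-1/2}\int_{1/2}^{\infty}X^{1/2-u}|\sum_{p\le X^3}\log p\,\log(Xp)\,p^{-u-ig^*}w_X(p)|\,du$), and Lemma \ref{ch.f. dir}. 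The only cosmetic difference is that you pivot through $\log|\zeta(\sigma_{X,g}+ig)|$ and quote Proposition \ref{ch.f. sxt}, whereas the paper compares $\log|\zeta(\tfrac12+ig^*)|$ directly with the prime sum $\sum_{p\le X^3}\cos(g\log p)p^{-1/2}$ and then applies its characteristic-function computation; the estimates involved are identical, so this is a harmless reorganization rather than a different route.
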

Note that $e^{-u^2/2}$ is the characteristic function of a normally distributed
random variable with mean zero and variance one. Hence, Theorem
\ref{thm 2} immediately follows from Proposition \ref{ch.f.} and
 L\'evy's continuity theorem.

We will now deduce Theorem \ref{upper bd} using Proposition \ref{ch.f. sxt} and Lemma \ref{Hough}.

\begin{proof}[Proof of Theorem \ref{upper bd}]
By Lemma \ref{Hough}  there is an absolute constant $C>0$ such that for $T<t \le 2T$
we have for $t \neq \g$ that
\[
\log |\zeta(\tfrac12+it)| \le \log |\zeta(\s_{X,t}+it)|+\tfrac12(\s_{X,t}-\tfrac12)\log T+C.
\]
Writing $\Delta(t)=((\s_{X,t}-\tfrac12)\log T+C)\Psi^{-1/2}$ we see that
\begin{equation} \label{prop sxt 1}
 \sum_{\substack{T < g \leq 2T \\ g \neq \g}}
\mathbf 1_{[\alpha, \infty)} \Big( \l |\z(\tfrac12+ig)|\, \Psi^{-1/2} \Big) 
\le   \sum_{\substack{T < g \leq 2T }}
\mathbf 1_{[\alpha-\Delta(g), \infty)} \Big( \l |\z(\s_{X,g}+ig)|\, \Psi^{-1/2} \Big). 
\end{equation}

Using Lemma \ref{zero density bd} and Chebyshev's inequality we have
for any fixed $\e>0$
\[
\sum_{\substack{T< g \leq 2T \\ \Delta(g) \geq \e}} 1
 \le
\e^{-1} \sum_{T< g \leq 2T} \Delta(g)
 \ll \frac{ T}{\e \Psi^{1/2}} \log T =o(T\l T) .
\]
Thus,
\begin{equation} \label{prop sxt 2}
\sum_{\substack{T < g \leq 2T }}
\mathbf 1_{[\alpha-\Delta(g), \infty)} \Big( \l |\z(\s_{X,g}+ig)|\, \Psi^{-1/2} \Big)
\leq \sum_{\substack{T < g \leq 2T }}
\mathbf 1_{[\alpha-\epsilon, \infty)} \Big( \l |\z(\s_{X,g}+ig)|\, \Psi^{-1/2} \Big)
+o(T\l T).
\end{equation}
Hence, Proposition \ref{ch.f. sxt} and L\'evy's continuity theorem imply that
\[
\frac{1}{N_g(T,2T)}
\sum_{\substack{T < g \leq 2T }}
\mathbf 1_{[\alpha-\epsilon, \infty)} \Big( \l |\z(\s_{X,g}+ig)|\, \Psi^{-1/2} \Big)
=\frac{1}{\sqrt{2\pi}}\int_{a-\epsilon}^{\infty} e^{-x^2/2} \, dx+o(1).
\]
Since $\e$ is arbitrary Theorem \ref{upper bd} now follows from this, \eqref{prop sxt 1} and \eqref{prop sxt 2}.

\end{proof}

\begin{lem} \label{ch.f. dir} Let $X=T^{1/100}$.
For $|u| \leq \Psi^{1/2}/100$, we have
\bes
\begin{split}
\frac{1}{N_g(T, 2T)} \sum_{T < \ga \leq 2T} \exp \bigg(i u \sum_{p \leq X^3}\frac{ \cos(\ga \l p)}{p^{1/2}} \Psi^{-1/2} \bigg)
= & e^{- u^2/2} \bigg(1+O \bigg(\frac{u^2 \l \l \l T}{\Psi} \bigg)  \bigg)\\
&+O \bigg(|u| \bigg(  \frac{\l \l \l T}{\Psi}  \bigg)^{1/2} \bigg)
+O(1/\l T).
\end{split}
\ees
\end{lem}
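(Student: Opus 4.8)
The plan is to reduce Lemma \ref{ch.f. dir} to the continuous estimate of Lemma \ref{normal lem},
\[
\frac1T\int_T^{2T}\exp\Bigl(iu\sum_{p\le X^3}\frac{\cos(t\l p)}{p^{1/2}}\Psi^{-1/2}\Bigr)\,dt
=e^{-u^2/2}\bigl(1+O(u^2\l\l\l T/\Psi)\bigr)+O\bigl(|u|(\l\l\l T/\Psi)^{1/2}\bigr),
\]
by showing that averaging over the shifted Gram points $T<g\le 2T$ yields the same value up to $O(1/\l T)$. A crude Riemann-sum comparison is useless: $\sum_{p\le X^3}\cos(g\l p)p^{-1/2}$ carries a frequency $\l X^3\asymp\l T$, so $\exp(iu(\,\cdot\,)\Psi^{-1/2})$ oscillates on exactly the scale $(\l T)^{-1}$ of the Gram spacing (Lemma \ref{simple lem}). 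Instead I would: (i) replace the long polynomial by the short one $S_Y(g):=\sum_{p\le Y}\cos(g\l p)p^{-1/2}$ with $Y:=T^{1/\Psi^6}$; (ii) Taylor expand $\exp(iuS_Y(g)\Psi^{-1/2})$ and compute each power moment at the Gram points by expanding into prime tuples and invoking the oscillation estimate of Lemma \ref{oscillation}; (iii) recognise the resulting diagonal contributions as the random-model moments supplied by Lemma \ref{moments lem}, reassemble, and undo step (i) on both sides.

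For step (i), put $R(g)=\sum_{Y<p\le X^3}\cos(g\l p)p^{-1/2}$; by $|e^{ia}-e^{ib}|\le|a-b|$ and Cauchy--Schwarz the Gram average of $\exp(iuS_{X^3}(g)\Psi^{-1/2})-\exp(iuS_Y(g)\Psi^{-1/2})$ is $\ll|u|\Psi^{-1/2}\bigl(N_g(T,2T)^{-1}\sum_{T<g\le2T}|R(g)|^2\bigr)^{1/2}$. Expanding $|R(g)|^2$ with $\cos(g\l p)=\tfrac12(p^{ig}+p^{-ig})$, the diagonal $p=q$ gives $\tfrac12 N_g(T,2T)\sum_{Y<p\le X^3}p^{-1}\ll N_g(T,2T)\l\l\l T$ by Mertens' theorem, while the remaining (off-diagonal) exponential sums — those over $(pq)^{\pm ig}$, and over $(p/q)^{\pm ig}$ with $p\ne q$ — are bounded by Lemma \ref{oscillation} (using $|\l(p/q)|\gg 1/X^3=T^{-3/100}$ for distinct primes $p,q\le X^3$); summed over the $\ll T^{6/100}$ pairs they total $o(N_g(T,2T))$. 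Hence $N_g(T,2T)^{-1}\sum_{T<g\le2T}|R(g)|^2\ll\l\l\l T$, so step (i) costs only $O(|u|(\l\l\l T/\Psi)^{1/2})$, which the Lemma permits. The same computation on the continuous side is precisely the Montgomery--Vaughan step in the proof of Lemma \ref{normal lem}, now run with $Y=T^{1/\Psi^6}$, so replacing $S_{X^3}$ by $S_Y$ in $\frac1T\int_T^{2T}$ is equally cheap. It remains to prove $N_g(T,2T)^{-1}\sum_{T<g\le2T}e^{iuS_Y(g)\Psi^{-1/2}}=\frac1T\int_T^{2T}e^{iuS_Y(t)\Psi^{-1/2}}\,dt+O(1/\l T)$.

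For (ii)--(iii), take $N=2\lfloor\Psi^5\rfloor$ (so $Y\le T^{1/N}$) and expand to order $N$, with remainder $\le(|u|\Psi^{-1/2}|S_Y|)^N/N!$. For $0\le n\le N$ write $S_Y(g)^n=2^{-n}\sum_{p_1,\dots,p_n\le Y}\sum_{\epsilon\in\{\pm1\}^n}(p_1^{\epsilon_1}\cdots p_n^{\epsilon_n})^{ig}(p_1\cdots p_n)^{-1/2}$ and sum over $T<g\le2T$: the tuples with $m:=p_1^{\epsilon_1}\cdots p_n^{\epsilon_n}=1$ give $N_g(T,2T)\int_{[0,1]^{\pi(Y)}}\bigl(\sum_{p\le Y}\cos(2\pi\theta_p)p^{-1/2}\bigr)^n\,d\theta$, while for $m\ne1$ one has $Y^{-n}\le|\l m|\le n\l Y$, so Lemma \ref{oscillation}, after dividing by $N_g(T,2T)\asymp T\l T$ and summing over the $\le(2Y)^n$ tuples, bounds their contribution by $\ll Y^{3n/2}T^{-1/2}\l^{1/2}T\ll T^{-1/2+o(1)}$ uniformly for $n\le N$ (since $Y^{3N/2}=T^{3/\Psi}$). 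By Lemma \ref{moments lem} that same diagonal integral equals $\frac1T\int_T^{2T}S_Y(t)^n\,dt$ up to $O(T^{-1/2}(cn)^{n/2})$, and taking $n=N$ even gives $N_g(T,2T)^{-1}\sum_{T<g\le2T}|S_Y(g)|^N\ll(\Psi cN)^{N/2}$. Reassembling: the moment-transfer errors, weighted by $(|u|\Psi^{-1/2})^n/n!$, sum to $O(T^{-1/3})$, and the tail remainder is $\ll\frac{(|u|\Psi^{-1/2})^N}{N!}(\Psi cN)^{N/2}\ll(c|u|^2/N)^{N/2}\ll(c/\Psi^4)^{\Psi^5}\ll 1/\l T$ by Stirling (using $|u|\le\Psi^{1/2}/100$); the identical expansion reconstructs $\frac1T\int_T^{2T}e^{iuS_Y(t)\Psi^{-1/2}}\,dt$ up to a remainder of the same size. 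This gives the required comparison, and combining it with step (i) on the continuous side and Lemma \ref{normal lem} completes the proof.

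The hard part is the high-moment bound $N_g(T,2T)^{-1}\sum_{T<g\le2T}|S_Y(g)|^N$ for $N\asymp\Psi^5$: the off-diagonal terms carry a factor $\asymp(T\l^3T/|\l m|)^{1/2}$ from Lemma \ref{oscillation}, and since $|\l m|$ can be as small as $Y^{-n}$, dominating it by $N_g(T,2T)\asymp T\l T$ for \emph{every} $n\le N$ forces $\l Y$ to exceed $(\l T)/N$ by only a bounded factor. This is exactly why the polynomial must be cut down to $S_Y$ with a very small $Y=T^{1/\Psi^6}$ before expanding; a direct expansion of the full polynomial $S_{X^3}$ (whose primes run up to $T^{3/100}$) would control only moments of bounded order, far too few to recover a characteristic function. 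The remaining points should be routine: one checks that passing between the prime ranges $T^{1/\Psi^6}$ and $X^3$ is harmless because the only sums that matter, $\Psi^{-1}\sum_{p\le Y}p^{-1}=2(1+O(\l\l\l T/\l\l T))$ and $\sum_{Y<p\le X^3}p^{-1}\ll\l\l\l T$, are essentially unchanged, and that the accumulated $T^{-1/2+o(1)}$ errors stay $O(1/\l T)$ once weighted by $(|u|\Psi^{-1/2})^n/n!$ and summed.
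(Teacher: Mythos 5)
Your argument is correct, but it takes a genuinely different route from the paper. The paper never truncates the polynomial and never computes moments at the Gram points: it writes the Gram sum as a Stieltjes integral $\int_T^{2T}\exp\bigl(iu\sum_{p\le X^3}\cos(t\l p)p^{-1/2}\Psi^{-1/2}\bigr)\,d\mu(t)$ with $\mu(t)=\lfloor(\theta(t)+\phi)/\pi\rfloor=\tfrac{t}{2\pi}\l\tfrac{t}{2\pi e}+r(t)$, $r(t)\ll1$; the smooth part is reduced to $\frac1T\int_T^{2T}$ (hence to Lemma \ref{normal lem}) by the Second Mean Value Theorem, and the $dr(t)$ part is integrated by parts and bounded by $\frac{|u|}{\Psi^{1/2}}\int_T^{2T}\bigl|\sum_{p\le X^3}\sin(t\l p)\l p\,p^{-1/2}\bigr|\,dt\ll\frac{|u|}{\Psi^{1/2}}T\l T$ via Cauchy--Schwarz and Montgomery--Vaughan, a relative error $O(|u|\Psi^{-1/2})$ that the stated error terms absorb. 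So your premise that a comparison with the continuous average cannot work at this frequency scale is a little overstated: it does work once the oscillation is only charged in the $L^1$-averaged (partial summation) sense, and this makes the paper's proof a few lines with no appeal to Lemma \ref{oscillation} at all. Your route instead proves a discrete analogue of Lemma \ref{character lemma}: truncating to $Y=T^{1/\Psi^6}$, expanding to order $N\asymp\Psi^5$, matching the $m=1$ tuples with the random-model moments and killing the $m\ne1$ tuples by Lemma \ref{oscillation} (your choices are consistent, since $Y\le T^{1/N}$ and $Y^{3N/2}=T^{3/\Psi}=T^{o(1)}$, and your step (i) correctly redoes the $L^2$ bound directly rather than citing \eqref{MV 1}, which would lose a power of $\Psi$ for the tail coefficients). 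The quantitative claims check out and all your extra errors fit inside $O(|u|(\l\l\l T/\Psi)^{1/2})+O(1/\l T)$. What your approach buys is stronger local information — the short Dirichlet polynomial genuinely equidistributes like the random model at the shifted Gram points, which is of independent interest and would let you bypass Lemma \ref{normal lem} entirely by redoing the Bessel-product computation; what it costs is the whole high-moment apparatus, including the forced second truncation from $X^3$ down to $T^{1/\Psi^6}$, none of which the paper's Stieltjes argument needs.
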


\begin{proof}
By Stieltjes integration
\bes
\begin{split}
\sum_{T < \ga \leq 2T} \exp \bigg(i u \sum_{p \leq X^3}\frac{ \cos(\ga \l p)}{p^{1/2}} \Psi^{-1/2} \bigg)
= \int_{T}^{2T} \exp \bigg(i u \sum_{p \leq X^3}\frac{ \cos(t \l p)}{p^{1/2}}\Psi^{-1/2} \bigg) d\mu(t),
\end{split}
\ees
where $\mu(t)=\mu_{\phi}(t)=\lfloor (\theta(t)+\phi)/\pi \rfloor=t/(2\pi) \l (t/(2\pi e))+r(t)$
and $r(t) \ll 1$ for $t \geq 10$. The right-hand side of this equals
\be \label{2 ints}
\begin{split}
\int_T^{2T} \exp \bigg( i u \sum_{p \leq X^3}\frac{ \cos(t \l p)}{p^{1/2}} \Psi^{-1/2} \bigg)  d\Big(\frac{t}{2\pi}\l\frac{t}{2\pi e}\Big)
+
\int_{T}^{2T} \exp \bigg(i u \sum_{p \leq X^3}\frac{ \cos(t \l p)}{p^{1/2}}\Psi^{-1/2} \bigg) dr(t).
\end{split}
\ee
The first integral equals
$$
\frac{1}{2\pi}\int_T^{2T} 
\exp \bigg(i u \sum_{p \leq X^3}\frac{ \cos(t \l p)}{p^{1/2}} \Psi^{-1/2} \bigg) \l \frac{t}{2\pi} dt,
$$
which by the Second Mean Value Theorem equals
$$
\frac{1}{2\pi} \l \frac{T}{2\pi} \int_T^{2T} 
\exp \bigg( i u \sum_{p \leq X^3}\frac{ \cos(t \l p)}{p^{1/2}} \Psi^{-1/2} \bigg)  dt+O(T).
$$
By Lemma \ref{normal lem} this equals
\be \label{fin one}
\begin{split}
\frac{T}{2\pi} \l \frac{T}{2\pi} e^{-u^2/2} \bigg(1+O \bigg(\frac{u^2 \l \l \l T}{\Psi} \bigg)  \bigg)
+O \bigg(|u|T \l T  \bigg(\frac{\l \l \l T}{\Psi} \bigg)^{1/2} \bigg)+O(T)
\end{split}
\ee
for $|u| \leq  \Psi^{1/2}/100$. 

It remains to bound the second integral 
in \eqref{2 ints}.
Integrating by parts we see that
\bes
\begin{split}
\int_{T}^{2T} \exp \bigg(i u \sum_{p \leq X^3}\frac{ \cos(t \l p)}{p^{1/2}}  \Psi^{-1/2} \bigg) dr(t) 
\ll 1 +\bigg| \int_{T}^{2T} r(t) 
d  \bigg(\exp \bigg(i u \sum_{p \leq X^3}\frac{ \cos(t \l p)}{p^{1/2}} \Psi^{-1/2}  \bigg)  \bigg)\bigg|.
\end{split}
\ees
Now
\bes
\begin{split}
\int_{T}^{2T} r(t) d  \bigg(\exp \bigg(i u \sum_{p \leq X^3}\frac{ \cos(t \l p)}{p^{1/2}}  \Psi^{-1/2} \bigg) \bigg)
\ll
\frac{|u|}{\Psi^{1/2}} \int_T^{2T}\bigg|  \sum_{p \leq X^3} \frac{ \sin(t \l p) \l p}{p^{1/2}}\bigg|dt .
\end{split}
\ees
Applying Cauchy's inequality and then Montgomery and Vaughan's 
mean value theorem for Dirichlet polynomials \cite{MVMVT:1974}, we see that
\bes
\begin{split}
\int_T^{2T}\Big|  \sum_{p \leq X^3} \frac{ \sin(t \l p) \l p}{p^{1/2}}\Big|dt \leq & T^{1/2}
\bigg( \int_T^{2T}\bigg|  \sum_{p \leq X^3} \frac{ \l p}{p^{1/2+it}}\bigg|^2dt\bigg)^{1/2} \\
 \ll & T^{1/2}(T \l^2 T)^{1/2}= T\l T.
\end{split}
\ees
We now have that
$$
\int_{T}^{2T} \exp \bigg(i u \sum_{p \leq X^3}\frac{ \cos(t \l p)}{p^{1/2}} \Psi^{-1/2} \bigg) dr(t)\ll 1
+ \frac{|u|}{ \Psi^{1/2}} T \l T.
$$
Combining this and \eqref{fin one} in \eqref{2 ints} we obtain
\bes
\begin{split}
\frac{1}{N_g(T, 2T)} \sum_{T < \ga \leq 2T} \exp \bigg(i u \sum_{p \leq X^3}\frac{ \cos(\ga \l p)}{p^{1/2}} \Psi^{-1/2} \bigg)
= & e^{- u^2/2} \bigg(1+O \bigg(\frac{u^2 \l \l \l T}{\Psi} \bigg)  \bigg)\\
&+O \bigg(|u| \bigg(  \frac{\l \l \l T}{\Psi}  \bigg)^{1/2} \bigg)
+O(1/\l T).
\end{split}
\ees
\end{proof}

Observe that Proposition \ref{ch.f. sxt}
follows immediately from Lemma \ref{ch.f. dir} and 

\begin{lem} \label{ch. f.}
Let $X=T^{1/100}$. For $|u| \leq \Psi^{1/2}/100$, we have
\be \label{prop62 3}
\begin{split}
\sum_{T < \gad \leq 2T} \exp\Big(iu \l |\z(\tfrac12+i\gad)| \Psi^{-1/2} \Big)
=&\sum_{T < \gad \leq 2T} \exp\bigg(iu \sum_{p \leq X^3}\frac{ \cos(\gad \l p)}{p^{1/2}}\Psi^{-1/2}\bigg)\\
&+O\bigg(\frac{|u| T \l T \l \l \l T}{ \Psi^{1/2}} \bigg).
\end{split}
\ee
Additionally, under the same hypotheses we have
\bes
\sum_{T < g \leq 2T} \exp\Big(iu \l |\z(\s_{X,g}+ig)| \Psi^{-1/2} \Big)
=\sum_{T < g \leq 2T} \exp\bigg(iu \sum_{p \leq X^3}\frac{ \cos(g \l p)}{p^{1/2}}\Psi^{-1/2}\bigg)
+O\bigg(\frac{|u| T \l T }{ \Psi^{1/2}} \bigg).
\ees
\end{lem}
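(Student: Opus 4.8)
The plan is to reduce both assertions to mean value estimates for Dirichlet polynomials summed over shifted Gram points. Since $|e^{ia}-e^{ib}|\le|a-b|$ for real $a,b$, the two sides of \eqref{prop62 3} differ by at most
\[
\frac{|u|}{\Psi^{1/2}}\sum_{T<\gad\le 2T}\bigg|\l|\z(\tfrac12+i\gad)|-\sum_{p\le X^3}\frac{\cos(\gad\l p)}{p^{1/2}}\bigg|,
\]
so it is enough to bound this last sum by $\ll T\l T\l\l\l T$. Applying Lemma \ref{approx form} with $X=T^{1/100}$, the inner quantity is
\[
\ll F(\gad;X)\l T+E_2(\gad;X)+E_3(\gad;X)+F(\gad;X)X^{\s_{X,\gad}-1/2}\int_{1/2}^{\infty}X^{1/2-u}\bigg|\sum_{p\le X^3}\frac{\l p\,\l(pX)}{p^{u+i\gad}}w_X(p)\bigg|\,du,
\]
and I would estimate the contribution of each of these four terms to $\sum_{T<\gad\le 2T}$ separately.

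The first three terms are handled directly by the lemmas in hand. By Lemma \ref{lem error bd}, $\sum_{T<\gad\le 2T}F(\gad;X)\l T\ll T\l T\l\l\l T$, which already accounts for the error claimed in \eqref{prop62 3}. For $E_2$, the coefficients $(1-w_X(p))/p^{1/2}$ vanish for $p\le X$ and are $\ll\l p/(p^{1/2}\cdot 3\l X)$ for $X<p\le X^3$, so \eqref{MV 1} (with polynomial length $X^3\le T^{1/4}$) gives $\sum_{T<g\le 2T}E_2(g;X)^2\ll T\l T$; by Cauchy's inequality and $N_g(T,2T)\ll T\l T$ this yields $\sum_{T<\gad\le 2T}E_2(\gad;X)\ll T\l T$. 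The same argument with \eqref{MV 2} gives $\sum_{T<\gad\le 2T}E_3(\gad;X)\ll T\l T$. Both are smaller than the target.

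The main obstacle is the integral term. Interchanging the sum over $\gad$ with the integral over $u$ and then applying Cauchy's inequality in $\gad$, for each fixed $u\ge1/2$ one has
\[
\sum_{T<\gad\le 2T}F(\gad;X)X^{\s_{X,\gad}-1/2}\bigg|\sum_{p\le X^3}\frac{\l p\,\l(pX)w_X(p)}{p^{u+i\gad}}\bigg|\le\Big(\sum_{T<\gad\le 2T}F(\gad;X)^2X^{2\s_{X,\gad}-1}\Big)^{1/2}\Big(\sum_{T<g\le 2T}\Big|\sum_{p\le X^3}\frac{\l p\,\l(pX)w_X(p)}{p^{u+ig}}\Big|^2\Big)^{1/2}.
\]
The first factor is $\ll(T(\l\l\l T)^2/\l T)^{1/2}$ by the second estimate in Lemma \ref{lem error bd}. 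For the second factor I would expand the square and apply Lemma \ref{oscillation} to the off-diagonal terms: since $\l(pX)\le4\l X$ and, as in the proof of \eqref{MV 1}, $|\l(p/q)|\ge X^{-3}$ for distinct primes $p,q\le X^3$, the off-diagonal contribution is only a small power of $T$, while the diagonal is $\ll N_g(T,2T)\sum_{p\le X^3}\l^2p\,\l^2(pX)/p\ll T\l T\cdot\l^4X\ll T\l^5T$, using $\sum_{p\le X^3}\l^2p\,\l^2(pX)/p\ll\l^4X$. Hence the second factor is $\ll T^{1/2}\l^{5/2}T$ uniformly in $u\ge1/2$, and since $\int_{1/2}^{\infty}X^{1/2-u}\,du=1/\l X\ll1/\l T$, the integral term contributes
\[
\ll\Big(\frac{T(\l\l\l T)^2}{\l T}\Big)^{1/2}\cdot T^{1/2}\l^{5/2}T\cdot\frac1{\l T}=T\l T\l\l\l T.
\]
Collecting the four bounds proves the first assertion. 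The exponent $5$ in the diagonal estimate is exactly what is needed to absorb both the loss in Lemma \ref{lem error bd} and the gain $1/\l X$, so the argument leaves essentially no room to spare here; this is the delicate point.

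For the second assertion one proceeds identically, using the second formula in Lemma \ref{approx form} and summing over all $T<g\le 2T$ (the point $\s_{X,g}+ig$ is never a zero of $\z(s)$, so there is no exceptional set). Now the role of $F(t;X)$ is played by the cleaner coefficients $X^{(1/2-\s_{X,t})/2}/\l X+(\s_{X,t}-\tfrac12)$ and $X^{(\s_{X,t}-1/2)/2}/\l X+(\s_{X,t}-\tfrac12)X^{\s_{X,t}-1/2}$, whose first power sums over $T<g\le 2T$ are $\ll T$ and whose square sums are $\ll T/\l T$, by Lemma \ref{zero density bd} applied with $\xi=X$ and $\xi=X^2\le T^{1/25}$. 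Running the three estimates above with these inputs gives a total contribution $\ll T\l T$, which is the error term claimed in the second part of the lemma.
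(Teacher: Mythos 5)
Your proposal is correct and takes essentially the same route as the paper: bound the difference of exponentials by $|u|\Psi^{-1/2}$ times the error in Lemma \ref{approx form}, handle $E_2,E_3$ with Cauchy's inequality and \eqref{MV 1}, \eqref{MV 2}, the $F(\gad;X)\l T$ term with Lemma \ref{lem error bd}, and the integral term by Cauchy--Schwarz against the second estimate of Lemma \ref{lem error bd} together with a mean value bound for the weighted prime sum. The only minor differences are bookkeeping: you apply Cauchy--Schwarz pointwise in $u$ after interchanging sum and integral (the paper applies it over the measure $X^{1/2-u}\,du$), you re-derive the needed mean value via Lemma \ref{oscillation} instead of normalizing the coefficients by $\l^2 X$ to invoke \eqref{MV 1}, and you write out the second assertion, which the paper only indicates via Lemma \ref{zero density bd}.
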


\begin{proof}
We will omit the proof of the second assertion
as it follows from a similar argument. Note that
in the second statement the
error term is slightly smaller because we can obtain better estimates on
averages of $X^{(\s_{X,g}-\frac12)}$ and $(\s_{X,g}-\tfrac12)$
 as opposed to averages of $F(g^*; X)$. (Compare
Lemma \ref{zero density bd} to Lemma \ref{lem error bd}.)

Note that $|e^{ia}-e^{ib}|=|\int_a^b e^{it} dt|\leq |b-a|$. By this and Lemma \ref{approx form} we have
\be \label{error form}
\begin{split}
& \sum_{T < \gad \leq 2T}   \bigg| \exp\Big(i u \l |\z(\tfrac12+i\gad)| \Psi^{-1/2} \Big) 
-\exp \bigg(iu \sum_{p \leq X^3}\frac{ \cos(\gad \l p)}{p^{1/2}} \Psi^{-1/2} \bigg) \bigg|\\
& \ll  \frac{|u|}{\Psi^{1/2}} \sum_{T < \gad \leq 2T} \bigg( F(\gad; X)X^{\s_{X, \gad}-1/2}  
\int_{1/2}^{\infty} X^{1/2-u}  \bigg|
\sum_{p \leq X^3}
 \frac{\l p \l pX }{p^{u+i\gad}}w_X(p) \bigg|du \\
 &  \qquad \qquad \qquad \qquad \quad
 + F(\gad;X)\log T+E_2(\gad;X)+E_3(\gad;X)\bigg).
\end{split}
\ee
(See \eqref{definition of es} for the definition of $E_1$ and Lemma \ref{approx form}
for the definitions of $E_2, E_3$.)
Observe that $(1-w_X(p)) \ll \l p/ \l X$, so by Cauchy's inequality and \eqref{MV 1},
$$
 \sum_{T < \gad \leq 2T} E_2(\gad;X) \ll (T \l T)^{1/2}  \bigg(\sum_{T < g \leq 2T} E_2(\ga;X)^2 \bigg)^{1/2} \ll T \l T.
$$
Similarly, by \eqref{MV 2}
$$
\sum_{T < \gad \leq 2T} E_3(\gad;X) \ll (T \l T)^{1/2}  \bigg( \sum_{T < \ga \leq 2T} 
E_3(g;X)^2 \bigg)^{1/2} \ll T \l T.
$$
Next, by Lemma \ref{lem error bd} 
$$
 \sum_{T < \gad \leq 2T} F(\gad; X)  \l T \ll T \l T (\l \l \l T).
$$

It remains to bound the first term
in \eqref{error form}.
We begin by applying Cauchy's inequality to see that
\be \label{eqn}
\begin{split}
& \sum_{T < \gad \leq 2T}  F(\gad; X) X^{\s_{X, \gad}-1/2} \int_{1/2}^{\infty} X^{1/2-u} \bigg|
\sum_{p \leq X^3}
 \frac{\l p \l pX }{p^{u+i\gad}}w_X(p)\bigg|du \\
 \leq & \bigg(\sum_{T < \gad \leq 2T}F^2(\gad; X) X^{2\s_{X, \gad}-1} \bigg)^{1/2} 
   \bigg(\sum_{T < \gad \leq 2T} 
  \bigg(\int_{1/2}^{\infty} X^{1/2-u}  \bigg|
\sum_{p \leq X^3}
 \frac{\l p \l pX }{p^{u+i\gad}}w_X(p) \bigg|du\bigg)^2\bigg)^{1/2}.
\end{split}
\ee
By Lemma \ref{lem error bd}, 
\be \label{eqn 1}
\sum_{T < \gad \leq 2T}F^2(\gad; X) X^{2\s_{X, \gad}-1} \ll T\frac{(\l \l \l T)^2}{\l T}.
\ee
Next, apply Cauchy's inequality to the final factor
on the right-hand side of \eqref{eqn} to see that
\bes
\begin{split}
\sum_{T < \gad \leq 2T} 
  \bigg(\int_{1/2}^{\infty} X^{1/2-u} & \bigg|
\sum_{p \leq X^3}
 \frac{\l p \l pX }{p^{u+i\gad}}w_X(p) \bigg|du\bigg)^2 \\
 \leq& \(\int_{1/2}^{\infty} X^{1/2-u} du\)
  \bigg(\int_{1/2}^{\infty} X^{1/2-u}  \sum_{T < \gad \leq 2T} \bigg|
\sum_{p \leq X^3}
 \frac{\l p \l pX }{p^{u+i\gad}}w_X(p) \bigg|^2  du \bigg) \\
 \ll& \int_{1/2}^{\infty} X^{1/2-u} \l^3 X  \sum_{T < \ga \leq 2T}  \bigg|
\sum_{p \leq X^3}
 \frac{\l p \l pX }{p^{u+i\ga}\l^2 X}w_X(p) \bigg|^2  du.
\end{split}
\ees
Noting that for $p \leq X^3$ we have $\l p \l pX/(p^{u}\l^2 X)  \ll \l p/(p^{1/2}\l X)$
for $u \geq 1/2$, we see from
\eqref{MV 1} that the sum over $g$ is $\ll T \l T$ uniformly for $u \geq 1/2$.
Hence, the quantity above is $\ll T \l^3 T$. 
Combining this with \eqref{eqn}
and \eqref{eqn 1} we see that
\bes
\begin{split}
\sum_{T < \gad \leq 2T}  F(\gad; X) X^{\s_{X, \gad}-1/2} \int_{1/2}^{\infty} X^{1/2-u}  \bigg|
\sum_{p \leq X^3}
 \frac{\l p \l pX }{p^{u+i\gad}}  w_X(p) \bigg|  du  \ll  T \l T \l \l \l T.
\end{split}
\ees
\end{proof}

\begin{proof}[Proof of Proposition \ref{ch.f.}]
Since $|e^{it}|=1$ we see that by Lemma \ref{zeros lem} extending the 
range of the sum on the left-hand side of \eqref{prop62 3} to $T< g \le 2T$, $g\neq \g$
produces an error term of size at most $o(T \l T)$.
Similarly, extending the sum on the right-hand side of \eqref{prop62 3} to all of $T<g \le 2T$
gives an error term not exceeding $o(T \l T)$. Also, by Lemma \ref{zeros lem} $N_g^*(T,2T)=N_g(T,2T)(1+o(1))$. 
Hence, by these observations we have for $|u| \le \Psi^{1/2}/100$ that
\bes
\begin{split}
\frac{1}{N_g^*(T,2T)}\sum_{\substack{T < g \leq 2T\\ g\neq \g} }
\exp\Big(iu \l |\z(\tfrac12+ig)| \Psi^{-1/2} \Big)
=&\frac{1}{N_g(T, 2T)}\sum_{T < g \leq 2T} 
\exp\bigg(iu \sum_{p \leq X^3}\frac{ \cos(g \l p)}{p^{1/2}}\Psi^{-1/2}\bigg)\\
&+O\bigg(\frac{|u| \l \l \l T}{ \Psi^{1/2}} \bigg)
+o(1).
\end{split}
\ees
Applying Lemma \ref{ch.f. dir} completes the proof.
\end{proof}

\subsection{Acknowledgments}
The majority of this article is part of the author's PhD thesis, which was supervised
by Prof. Steven Gonek. I would like to thank Prof. Gonek for suggesting
this problem and also for his guidance and encouragement. The
exposition of this article has greatly benefited from his advice and suggestions.
I am also very grateful
to the anonymous referee for giving 
helpful suggestions and remarks, which significantly improved this article. In particular, for pointing
out the unconditional inequality of Bob Hough and suggesting
Theorem \ref{upper bd}. This theorem is new to this version.


\begin{thebibliography}{0}

\bibitem{Ba:2012}   Banks, W., V. Castillo-Garate, L. Fontana, and C. Morpurgo,
``Self-intersections of the Riemann zeta-function on the critical line." \textit{Journal of Mathematical Analysis and Applications} \textbf{406}, no. 2 (2013): 475-481.

\bibitem{Bi:1986}   Billingsley, P. \textit{Probability and Measure}, 2nd ed.
New York: John Wiley, 1986.


\bibitem{BH2:1987} Bombieri, E. and D. A. Hejhal.
``On the zeros of Epstein zeta functions."
\textit{Comptes Rendus de l'Acad\'emie des 
Sciences Paris S\'erie I Math\'ematiques} \textbf{304},
no. 9 (1987): 213-217.

\bibitem{BH:1995} Bombieri, E. and D. A. Hejhal.
``On the distribution of zeros of linear combinations
of Euler products". \textit{Duke Mathematical Journal} 
\textbf{80},  no. 3 (1995):
821-862.



\bibitem{B:1948} Borchsenius, V. and B. Jessen. 
``Mean Motions and Values of the Riemann Zeta Function." 
\textit{Acta Mathematica} \textbf{80}, no. 1 (1948): 97-166.

\bibitem{F:2013} Farmer, D. W., S. M. Gonek,
and Y. Lee.
``Pair Correlation of the zeros of the derivative of the Riemann $\xi$-function."
(2013): preprint.

 
 
\bibitem{H:1987}  Hejhal, D. A.
``On the distribution of $\l |\z'(\tfrac12+it)|$." 
\textit{Number theory, trace formulas and discrete groups} (Oslo, 1987), 343-370. Boston: Academic Press, 1989.

\bibitem{Hou:2011} Hough, B. ``The Distribution of the logarithm of orthogonal and symplectic
$L$-functions." (2011): preprint arXiv:1109.1783.
 
\bibitem{L:1975}   Levinson, N. ``Almost all roots of
$\zeta(s)=a$ are arbitrarily close to $\sigma=1/2$." 
\textit{Proceeding of the Natural 
Academy of Sciences of the United States of America}  \textbf{72}, (1975):1322-1324.



\bibitem{MVMVT:1974}  Montgomery, H. L. and  R. C. Vaughan. ``Hilbert's
inequality." \textit{ Journal of the London Mathematical Society (2)} \textbf{8}, (1974):73-82.


\bibitem{Se:1946}  Selberg, A. ``Contributions to the theory of the Riemann zeta-function." \textit{Archiv 
for Mathematik og Naturvidenskab} \textbf{48}, no. 5 (1946): 89-155.

\bibitem{S:1992} Selberg, A. ``Old and new conjectures and results about a class of Dirichlet series." In 
\textit{Proceedings of the Amalfi Conference on Analytic Number Theory (Maiori 1989)},367-385. Salerno: Universit\`a di Salerno 1992.

\bibitem{So:2009}  Soundararajan, K., 
``Moments of the Riemann zeta function",  
{\em Ann. of Math.} (2) \textbf{170} (2009), no. 2, 981-993. 

\bibitem{Ten:1995} Tenenbaum, G. \textit{Introduction to analytic
and probabilistic number theory}, Cambridge: Cambridge University Press, 1995.

\bibitem{T:1986}  Titchmarsh, E. C. \textit{The theory of the Riemann zeta-function}, 2nd ed. Revised by D.R. 
Heath-Brown. Oxford: Oxford University Press, 1986. 


\bibitem{Ts:1984}  Tsang, K. M.``The distribution of the values of the Riemann zeta-function." 
PhD diss., Princeton Univ., Princeton, 1984.

\end{thebibliography}
\end{document}